\numberwithin{equation}{section}
\numberwithin{figure}{section}
\theoremstyle{plain}
\newtheorem{thm}{\protect\theoremname}[section]
\theoremstyle{definition}
\newtheorem{rem}[thm]{\protect\remarkname}
\theoremstyle{definition}
\newtheorem{defn}[thm]{\protect\definitionname}
\theoremstyle{plain}
\newtheorem{prop}[thm]{\protect\propositionname}
\theoremstyle{plain}
\newtheorem{lem}[thm]{\protect\lemmaname}
\theoremstyle{plain}
\theoremstyle{plain}
\theoremstyle{definition}
\theoremstyle{definition}
\theoremstyle{definition}
\theoremstyle{definition}
\newenvironment{example}
{\pushQED{\qed}\examplex}
{\popQED\endexamplex}
\DeclareMathOperator{\diam}{diam}
\DeclareMathOperator{\supp}{supp}
\DeclareMathOperator{\Int}{Int}
\newcommand{\R}{\mathbb R}
\newcommand{\N}{\mathbb N}
\newcommand{\Q}{\mathbb Q}
\newcommand{\MM}{\mathcal M}
\renewcommand{\b}{\color{blue}}
\newcommand{\eps}{\varepsilon}
\newcommand{\mA}{\mathcal{A}}
\newcommand{\hdim}{\dim_H}
\newcommand{\adim}{\dim_A}
\newcommand{\lhdim}{\underline{\dim}_H}
\newcommand{\uhdim}{\overline{\dim}_H}
\newcommand{\ld}{\underline{d}}
\newcommand{\ud}{\overline{d}}
\DeclareMathOperator{\Lip}{Lip}
\DeclareMathOperator{\Gr}{Gr}
\providecommand{\conjecturename}{Conjecture}
\providecommand{\corollaryname}{Corollary}
\providecommand{\definitionname}{Definition}
\providecommand{\examplename}{Example}
\providecommand{\lemmaname}{Lemma}
\providecommand{\problemname}{Problem}
\providecommand{\propositionname}{Proposition}
\providecommand{\remarkname}{Remark}
\providecommand{\theoremname}{Theorem}
\providecommand{\taskname}{Task}
\def\diam{{\rm diam}}
\def\supp{{\rm supp}}
\newcommand{\lam}{\lambda}
\def\Lam{\Lambda}
\newcommand{\om}{\omega}
\def\Om{\Omega}
\newcommand{\sig}{\sigma}
\def\N{{\mathbb N}}
\def\Ak{{\mathcal A}}
\def\Bk{{\mathcal B}}
\def\Mk{{\mathcal M}}
\def\Vk{{\mathcal V}}
\def\be{\begin{equation}}
	\def\ee{\end{equation}}
\newcommand{\Ek}{{\mathcal E}}
\newcommand{\Fk}{{\mathcal F}}
\def\fin{{\mathrm{fin}}}
\def\essinf{{\mathrm{ess inf}}}
\def\esssup{{\mathrm{ess sup}}}
\begin{document}
\title{Universal projection theorems with applications to multifractal analysis and the dimension of every ergodic measure on self-conformal sets simultaneously}

\author{Bal\'azs B\'ar\'any$^1$}
\address{$^1$Department of Stochastics,	Institute of Mathematics, Budapest University of Technology and Economics, M\H{u}egyetem rkp. 3, H-1111 Budapest, Hungary}
\email{balubsheep@gmail.com}

\author{K\'aroly Simon$^{1,2}$}
\address{$^2$HUN-REN--BME Stochastics Research Group, Budapest University of Technology and Economics, M\H{u}egyetem rkp. 3, H-1111 Budapest, Hungary}
\email{simonk@math.bme.hu}

\author{Adam \'Spiewak$^3$}
\address{$^3$ Institute of Mathematics of the Polish Academy of Sciences, ul. \'Sniadeckich 8, 00-656 Warszawa, Poland}
\email{ad.spiewak@gmail.com}

\thanks{B. B\'ar\'any acknowledges support from the grant NKFI FK134251. B. B\'ar\'any and K. Simon were supported by the grants NKFI K142169 and KKP144059 "Fractal geometry and applications". A. \'Spiewak was partially supported by the National Science Centre (Poland) grant 2020/39/B/ST1/02329. We are grateful to Thomas Jordan and Boris Solomyak for useful discussions which have inspired this work.}

\date{\today}

\begin{abstract}
We prove a \textit{universal projection theorem}, giving conditions on a parametrized family of maps $\Pi_\lam : X \to \R^d$ and a collection $\Mk$ of measures on $X$ under which for almost every $\lam$ equality $\hdim \Pi_\lam \mu = \min\{d, \hdim \mu\}$  holds for all measures $\mu \in \Mk$ \textbf{simultaneously} (i.e. on a full measure set of $\lam$'s independent of $\mu$). We require family $\Pi_\lam$ to satisfy a transversality condition and collection $\Mk$ to satisfy a new condition called \textit{relative dimension separability}. Under the same assumptions, we also prove that if the Assouad dimension of $X$ is smaller than $d$, then for almost every $\lam$, projection $\Pi_\lam$ is nearly bi-Lipschitz (i.e. with pointwise $\alpha$-H\"older inverse for every $\alpha \in (0,1)$) at $\mu$-a.e. $x$, for all $\mu \in \Mk$ simultaneously. Our setting encompasses families of orthogonal projections, natural projections corresponding to conformal iterated function systems, and non-autonomous or random IFS.

As applications, we provide novel results on the multifractal analysis, giving formula for the Hausdorff dimension of a level set of the local dimension for a typical (w.r.t the translation parameter) self-similar measure on the line, valid for the full range spectrum (including the decreasing part of the spectrum; previous results were covering only the increasing part). 

Among another applications, we prove that given a parametrized contracting conformal IFS satisfying the transversality condition, for almost every parameter the dimension formula holds for all ergodic shift-invariant measures simultaneously. We also prove that the dimension part of the Marstrand's projection theorem holds simultaneously for the collection of all ergodic measures on a strongly separated self-conformal set and for the collection of all Gibbs measures on a self-conformal set (without any separation).

\end{abstract}

\keywords{iterated function systems, transversality, multifractal analysis, orthogonal projections, dimension theory}

\subjclass[2020]{37E05 (Dynamical systems involving maps of the interval (piecewise continuous, continuous, smooth)), 28A80 (Fractals), 28A75 (Length, area, volume, other geometric measure theory)}

\maketitle

\section{Introduction}

\subsection{Marstrand's projection theorem} Determining the dimension of certain objects, like sets and measures, plays a crucial role in geometric measure theory and fractal geometry, just like understanding how different actions change the value of the dimension. The classical \textit{projection theorem}, originated to Marstrand \cite{Marstrand} and later generalized in several ways e.g. by Falconer \cite{Falconerproj}, Kaufman \cite{Kaufman2}, Mattila \cite{Mattila75}, claims that the dimension of the orthogonal projection of a set in a typical direction (in some proper sense) does not drop with respect to the natural upper bound (the minimum of the dimension of the space, where we project, and the dimension of the projected set). For the present work, the most relevant is a version for measures \cite{HuTaylorProjections, HuntKaloshin97, SauerYorke97}, which states that for every finite Borel measure $\mu$ on $\R^n$
\begin{equation}\label{eq: marstrand} \hdim(P_V \mu) = \min\{ d,  \hdim \mu \}  \text{ for } \gamma \text{-a.e. } V \in \Gr(d,n),
\end{equation}
where $\hdim$ denotes the Hausdorff dimension\footnote{in fact, for general measures one should consider the upper and lower Hausdorff dimensions separately. For simplicity, we omit this subtlety in the introduction.}, $\Gr(d,n)$ is the Grassmannian manifold of $d$-dimensional linear subspaces in $\R^n$ endowed with the unique $O(n)$-invariant probability measure $\gamma$ (see \cite[Section 3.9]{mattila}) and $P_V$ denotes the orthogonal projection onto $V \in \Gr(d,n)$ - see Section \ref{sec: prelim} for definitions.

Marstrand's projection theorem has been generalized and extended in several directions, with the methods behind its proof becoming an underlying paradigm for many areas of research in geometric measure theory \cite{mattila, MattilaFourier, SixtyProjections}, fractal geometry \cite{PollicottSimonDeleted, SolAC, SolomyakTransversSurvey} or dynamical systems \cite{SYC91, Tsujii}. The method is often referred to as the \textit{transversality technique}. Due to its wide range of applications, it has been extended to generalized projection schemes, which apply to several settings at once, see e.g. \cite{SolFamilies, PeresSchlag, LPS02, BongersTaylor23, BSS}. The aim of this paper is to improve the strength of this technique in the setting of generalized projections and utilize it for novel applications in geometric measure theory and fractal geometry.

\subsection{Main results - universal projection theorem} The main result of this paper is Theorem \ref{thm: general proj}. Let us explain it first in a non-technical manner. Let $\Pi_\lam : X \to \R^d$ be a collection of Lipschitz maps (\textit{projections}) parametrized by a parameter $\lam \in U$ and satisfying the transversality condition with respect to a measure $\eta$ on $U$ (see Section \ref{sec: main} for precise definitions and assumptions). A direct extension of \eqref{eq: marstrand} to such a general setting states that given a finite measure $\mu$ on $X$ the following holds
\begin{equation}\label{eq: proj measure intro} \hdim \Pi_\lam \mu = \min\{ d, \hdim \mu\}  \text{ for } \eta\text{-a.e. } \lam \in U,
\end{equation}
see e.g. \cite[Theorem 6.6.2]{BSS}. The first goal of this paper is to study conditions under which \eqref{eq: marstrand}, and more generally \eqref{eq: proj measure intro}, hold for \textit{all} measures in a given class of measures $\Mk$ \textit{simultaneously}, i.e. with a measure zero set of exceptional projections independent of measure $\mu \in \Mk$. More precisely, we aim at finding conditions on $\Mk$ guaranteeing that
\begin{equation}\label{eq: sim general} \eta \left( \left\{ \lam \in U : \underset{\mu \in \Mk}{\exists} \hdim \Pi_\lam \mu \neq \min\{ d, \hdim \mu\} \right\} \right) = 0
\end{equation}
holds. Clearly, this question is relevant only for uncountable families $\Mk$ and it is easy to find examples of collections $\Mk$ for which \eqref{eq: sim general} does not hold (see Example \ref{eq: orthogonal proj non example} for the case of orthogonal projections). Our main result - Theorem \ref{thm: general proj} - identifies a condition on $\Mk$, which we call \textit{relative dimension separability} (see Definition \ref{defn: relative dimension separability}), guaranteeing that \eqref{eq: sim general} holds. We refer to the result as the \textit{universal projection theorem}\footnote{the name \textit{universal projection theorem} is inspired by seminal results and constructions known in information theory as \textit{universal source codings}, providing compression algorithms operating in an optimal rate for any source distribution in a given class, without any prior knowledge of this distribution, see e.g. \cite[Chapter 13]{CoverElements}. A famous example is the Lempel-Ziv coding \cite{LZ77, LZ78} achieving optimal rate for any stationary ergodic source distribution \cite[Theorem 13.5.3]{CoverElements}.}.  We shall emphasize, that the projection scheme studied in this paper does not require any extra assumptions than the classical ones, hence the simultaneous result \eqref{eq: sim general} holds essentially whenever the standard transversality technique can be applied.

We also study the same problem in the context of \textit{embeddings}, corresponding to the injectivity properties of the projections. This is an important topic in analysis and geometric measure theory, studied by numerous authors \cite{Mane81, SYC91, EFNT94, BAEFN93, HK99, olson2002bouligand, RossiShmerkinHolderCoverings}. In recent years there has been an increasing interest in almost surely injective embeddings of measures, due to its applications to compression of analog signals \cite{WV10, LosslessAnalogCompression, mmdimcompress} and time-delayed embeddings of dynamical systems \cite{SSOY98, BGSTakens, BGS22}. Of particular importance is establishing guarantees on the regularity of the embedding \cite{HK99, olson2002bouligand, baranski2023regularity, SRegularity}. In the fractal setting, strongest regularity properties have been obtained in terms of the Assouad dimension \cite{olson2002bouligand} (see also \cite{Rob11}), guaranteeing that the inverse to the orthogonal projection of a compact set $X \subset \R^N$ onto a typical $k$-dimensional plane is almost bi-Lipschitz (in particular: $\alpha$-H\"older for every $\alpha \in (0,1)$) provided that $\adim (X-X) < k$ (here $\adim$ denotes the Assouad dimension, see Definition \ref{defn:adim}). It was shown in \cite[Theorem 1.7.(iii)]{baranski2023regularity}, that given a probability measure $\mu$ on $X$, a weaker condition $\adim X < k$ suffices to guarantee that the inverse is defined $\mu$-almost everywhere and it is pointwise almost bi-Lipschitz at $\mu$-a.e. $x \in X$. In Theorem \ref{thm: general proj} we improve this result to a universal version for general projection schemes. In a spirit similar to \eqref{eq: sim general}, we prove in Theorem \ref{thm: general proj} that the following holds for relative dimension separable collections of measures $\Mk$ on $X$
\begin{equation}\label{eq: assouad general} \text{if } \adim X < d, \text{ then } \eta \left( \left\{ \lam \in U : \underset{\mu \in \Mk}{\exists}\  \Pi_\lam \text{ is not } \mu\text{-nearly bi-Lipschitz} \right\} \right) = 0,
\end{equation}
with the precise definition of the $\mu$-nearly bi-Lipschitz property given in Definition \ref{defn: nearly biLip}. Establishing the universal nearly bi-Lipschitz property \eqref{eq: assouad general} is a crucial step for further applications. As we explain later, in the context of iterated function systems, the nearly bi-Lipschitz property is equivalent to a new and non-trivial separation condition called \textit{exponential distance from the enemy} (EDE) (see Definition \ref{defn: ede} and Proposition \ref{prop: ede holder}). Its usefulness is verified by novel applications to the multifractal analysis, elaborated on below. See also Remark \ref{rem: ede useful}.

\subsection{Applications I - multifractal analysis of self-similar measures}

Our most important application of the universal projection theorem described above is to the multifractal analysis of self-similar measures. To explain it, let us first introduce the general setting of iterated functions systems.

Let $\Ak$ be a finite collection of indices and let $\Fk=\{f_i\colon\R^d\to\R^d\}_{i\in\Ak}$ be a finite collection of (strictly) contracting maps, called iterated function system (IFS), and let $p = (p_i)_{i\in\Ak}$ be a probability measure on $\Ak$. Then there exists a unique non-empty compact set $\Lambda$ and a unique compactly supported probability measure $\nu$ such that $\Lam$ is invariant with respect to $\Fk$ in the sense that $\Lambda=\bigcup \limits_{i\in\Ak}f_i(\Lambda)$ and $\nu$ is stationary, i.e. $\nu=\sum_{i\in\Ak}p_i(f_i)_*\nu$ - see Hutchinson~\cite{Hutchinson}. Measure $\nu$ is called \textit{self-similar} if $\Fk$ consists of similarity maps. The \textit{multifractal analysis} studies the level-set function $\alpha \mapsto \dim_H \left( \left\{x:d(\nu, x)=\alpha\right\} \right)$. It can be seen as a fine-scale analysis of the geometry of measure $\mu$ and originates from the study of the distribution of singularities on strange attractors in a chaotic, nonlinear dynamics \cite{HJKPSMultifractal}. Later, a rigorous mathematical approach has been developed and became one of the central topics in fractal geometry, see e.g. \cite{PesinBook} for a more detailed discussion. A basic heuristics for a self-similar measure corresponding to a probability vector $p$ and an IFS $\Fk=\{f_i(x)=\lambda_ix+t_i\}_{i\in\Ak}$ is that one expects the formula
\begin{equation}\label{eq: multifractal main} \dim_H \left( \left\{x:d(\nu, x)=\alpha\right\} \right) = \inf_{q\in\R}(\alpha q+T(q)),
\end{equation}
to hold, where for $q\in\R$ we define $T(q)$ to be the unique real number satisfying $\sum \limits_{i\in\Ak}p_i^q|\lambda_i|^{T(q)}=1$ (the quantity $T(q)$ is related to the $L^q$-spectrum of the self-similar measure, see Remark \ref{rem: L^q}). The maximal range of $\alpha$'s on which one can expect \eqref{eq: multifractal main} to hold is $\left[\min\limits_{i\in\Ak}\frac{\log p_i}{\log|\lambda_i|},\max\limits_{i\in\Ak}\frac{\log p_i}{\log|\lambda_i|}\right]$. Arbeiter and Patschke \cite{AP} showed that \eqref{eq: multifractal main} holds for the full range of $\alpha$'s if $\Fk$ satisfies the Open Set Condition. With the aid of the universal projection theorem we can prove the following result, addressing the overlapping case for typical parameters.

\begin{thm}\label{thm:multi intro}
	Fix $\lam_i \in (-1,1) \setminus \{0\}, i\in \Ak$ for a finite set $\Ak$. For each $t = (t_i)_{i \in \Ak} \in \R^{\Ak}$ and a probability vector $p = (p_i)_{i \in \Ak}$, let $\nu_{t,p}$ be the self-similar measure corresponding to the IFS $\Fk_t=\{f_i(x)=\lambda_ix+t_i\}_{i\in\Ak}$ on $\R$ and the probability vector $p$. The following holds for Lebesgue almost every $t \in \R^\Ak$. If the similarity dimension $s_0 = s(\Fk_t)$ of $\Fk_t$ satisfies $s_0 < 1$, then equality
	\[\dim_H \left( \left\{x:d(\nu_{t,p}, x)=\alpha\right\} \right)=\inf_{q\in\R}(\alpha q+T(q))\]
	holds for every $\alpha\in\left[\min\limits_{ i \in \Ak}\frac{\log p_i}{\log|\lambda_i|},\max\limits_{ i \in \Ak }\frac{\log p_i}{\log|\lambda_i|}\right]$ and every probability vector $p$ such that $p_i\neq |\lambda_i|^{s_0}$ for some $i \in \Ak$. 
\end{thm}

See also Theorem \ref{thm: multi high dim main} for the multidimensional version. Let us explain the significance of the above result. So far, the strongest result on the multifractal analysis of self-similar measures with overlaps is the one by Barral and Feng \cite[Theorem~1.2, Remark~7.3]{BarralFeng} (improving upon their earlier result \cite[Theorem~1.3, Theorem~6.4]{BarralFengselfaffine}), which establishes \eqref{eq: multifractal main} for self-similar measures with $s_0 < 1$ and satisfying the Exponential Separation Condition (recall that it is satisfied for $t \in \R^\Ak$ outside of a set of positive Hausdorff codimension, so in particular for almost every $t \in \R^\Ak$), for $\alpha\in\left[\min\limits_{i\in\Ak}\frac{\log p_i}{\log|\lambda_i|},\frac{\sum_i|\lambda_i|^{s_0}\log p_i}{\sum_i|\lambda_i|^{s_0}\log|\lambda_i|}\right]$. This range corresponds to the \textit{increasing} part of the spectrum. The remaining range $\left[ \frac{\sum_i|\lambda_i|^{s_0}\log p_i}{\sum_i|\lambda_i|^{s_0}\log|\lambda_i|}, \max\limits_{i\in\Ak}\frac{\log p_i}{\log|\lambda_i|} \right]$ (the \textit{decreasing} part of the spectrum), uncovered by their results, corresponds to $\alpha$'s for which the infimum in \eqref{eq: multifractal main} is attained for $q<0$ and hence is related to studying the $L^q$-spectrum of the self-similar measure for negative $q$. This is known to be notoriously difficult and there are no corresponding results available for typical self-similar measures with overlaps in this range. With the use of the nearly bi-Lipschitz property from Theorem \ref{thm: general proj} we are able to circumvent this problem. According to our best knowledge, Theorem \ref{thm:multi intro} is the first result on the multifractal spectrum of typical self-similar measures with overlaps, which covers the full spectrum. Let us emphasize that while the introduction of additive combinatorics methods of Hochman \cite{H} and Shmerkin \cite{Shmerkinlq} to the study of self-similar measures have led to deep breakthroughs in recent years (including results of Barral and Feng \cite{BarralFeng} on multifractal formalism for self-similar measures), these methods have not been yet able to provide results on the multifractal analysis in the full range $\left[\min\limits_{ i \in \Ak}\frac{\log p_i}{\log|\lambda_i|},\max\limits_{ i \in \Ak }\frac{\log p_i}{\log|\lambda_i|}\right]$. The progress in Theorem \ref{thm:multi intro} is based on the classical transversality technique, with establishing \eqref{eq: assouad general} as the crucial ingredient. It is easy to extend our results to the Birkhoff spectra of continuous potentials.

\subsection{Applications II - dimension of every ergodic measure on self-conformal sets simultaneously}

The transversality method was first applied to the case of iterated function systems by Pollicott and Simon \cite{PollicottSimonDeleted}, and later several generalizations have appeared, see for instance \cite{SolPeresBernoulli, SolFamilies, PeresSchlag, LPS02, SSUPacific,SSUTrans, BSSSTypical, BSSSExposition24}. See also \cite{SolomyakTransversSurvey} for a recent survey and \cite{BSS} for an in-depth discussion. Roughly speaking, the dimensional parts of these statements can be summarized as follows. Let $\Fk_\lam=\{f_i^{\lambda}\colon\R^d\to\R^d\}_{i\in\Ak},\ \lam \in U$ be a family of parametrized conformal ($C^{1+\theta}$) IFS, which satisfies the transversality condition with respect to a probability measure $\eta$ on $U$ (see Example \ref{ex:parametrized IFS} for the precise assumptions). Let $\Pi_\lambda : \Ak^\N \to \R^d,\ \Pi_\lam(\om) = \lim \limits_{n \to \infty} f_{\om_1} \circ \cdots \circ f_{\om_n}(x)$ be the natural projection map corresponding to $\Fk_\lam$ and let $\mu$ be an ergodic left-shift invariant measure on $\Ak^\N$. Then (see e.g. \cite[Theorem 14.4.2]{BSS})
\begin{equation}\label{eq:dimmeasurebasic}
	\dim_H( \Pi_\lambda \mu)=\min\left\{d,\frac{h(\mu)}{\chi(\mu,\lambda)}\right\} \text{ for } \eta\text{-a.e. } \lam,
\end{equation} where $h(\mu)$ denotes the entropy of $\mu$ and $\chi(\mu,\lambda)$ denotes the Lyapunov exponent of $\mu$ with respect to $\Fk_\lambda$. With the use of our methods, we can improve this classical result to a version holding simultaneously for all ergodic measures on $\Ak^\N$:
\begin{equation}\label{eq: simult IFS} \eta \left( \left\{ \lam \in U : \text{ there exists } \mu \in \Ek_\sigma(\Ak^\N) \text{ such that} \hdim (\Pi_\lam \mu) \neq  \min\left\{d, \frac{h(\mu)}{\chi(\mu, \lam)} \right\} \right\} \right) = 0,
\end{equation}
where $\Ek_\sigma(\Ak^\N)$ is the set of all shift-invariant ergodic Borel probability measures on $\Ak^\N$. See Theorem \ref{thm: main trans IFS} for the precise statement. 

The setting of our general result is wide enough to encompass also other cases,
like non-autonomous IFS, where in each iterate we choose different IFS with the same parametrization (see Nakajima~\cite{Nak}) or when in each iterate we add an independent, identically distributed error (see Jordan, Pollicott and Simon~\cite{JPS07}, Koivusalo~\cite{Koivusalo} and Liu and Wu \cite{LW}). More details are given in Section \ref{sec: further examples}.

A strong dimension result holding simultaneously for all ergodic measures on self-similar sets in $\R$ was obtained by Jordan and Rapaport \cite[Theorem 1.1]{JordanRapaport}. They showed that if an IFS consisting of similarities on $\R$ satisfies the Exponential Separation Condition (ESC), then \eqref{eq:dimmeasurebasic} holds for all ergodic measures $\mu$ on $\Ak^\N$. Our result \eqref{eq: simult IFS} extends this to typical non-linear IFS on $\R^n$ satisfying the transversality condition. Very recently, Rapaport \cite{RapaportAnalytic} extended considerably Hochman's result \cite{H}, proving that \eqref{eq:dimmeasurebasic} holds for any IFS on $\R$ consisting of analytic contractions satisfying ESC and every Bernoulli measure $\mu$ on $\Ak^\N$. It however seems a challenging problem to extend Rapaport's result to all ergodic measures in a fashion similar to \cite{JordanRapaport}, as the latter relies crucially on Shmerkin's result on $L^q$ dimension of self-similar measures with ESC \cite{Shmerkinlq}, which does not hold for IFS consisting of analytic maps, see the discussion in \cite[Section~1.2.2]{BaranyKolossvaryTroscheit}. Our result \eqref{eq: simult IFS} is able to deal with all ergodic measures and requires weaker regularity ($C^{1+\theta}$ instead of analyticity) than Rapaport's \cite{RapaportAnalytic}, at the cost of assuming transversality instead of ESC.

\subsection{Applications III - orthogonal projections of ergodic measures on self-conformal sets}

Applications of the universal projection theorem (Theorem \ref{thm: general proj}) require verifying the relative dimension separability property of a given family $\Mk$ of measures on a set $X$. In particular, we can verify it in two important cases of dynamical origin, related to the IFS theory discussed above:
\begin{enumerate}[(i)]
	\item\label{it: ergodic} for $\Mk$ being the family of all ergodic invariant measures on a self-conformal set $X$ satisfying the strong separation condition,
	\item\label{it: gibbs} for $\Mk$ being the family of all Gibbs measures (corresponding to all H\"older continuous potentials) on a self-conformal set $X$ (without any separation conditions).
\end{enumerate}

This allows us to conclude that Marstand's projection theorem \eqref{eq: marstrand} holds simultaneously for $\mu \in \Mk$, with $\Mk$ as in \ref{it: ergodic} and \ref{it: gibbs}. See Theorems \ref{thm: orth proj ergodic} and \ref{thm: main IFS} for precise statements. This can be compared with a result of Bruce and Jin \cite[Theorem 1.3]{BruceJinGibbs}, showing that given a self-conformal IFS on $\R^n$ which satisfies certain irrationality condition for the derivatives, one has $\hdim P_V \mu = \min\{ d, \hdim \mu\}$ for every $V \in \Gr(d,n)$ and every Gibbs measure $\mu$ (extending the results of Hochman and Shmerkin \cite{HochmanShmerkinLocal}). Our Theorem \ref{thm: main IFS} does not require any irrationality condition, at the cost of allowing the dimension formula to fail for a zero-measure set of projections.

\section{Main result: a universal projection theorem}\label{sec: main}

We shall now describe the setting in which we will prove a universal projection theorem. Let $X$ be a compact topological space (the \textit{phase space}) and let $U$ be a hereditary Lindel\"of topological space\footnote{topological space $U$ is hereditary Lindel\"of if every subset has the property that its every open cover has a countable subcover.  Every separable metric space is hereditary Lindel\"of, see e.g. \cite{EngelkingGT}.} (the \textit{parameter space}). Let $\eta$ be a locally finite Borel measure on $U$. For $\lam \in U$ let $\rho_\lam$ be a metric on $X$ compatible with its topology and let $B_\lam(x,r)$ denote the open $r$-ball in metric $\rho_\lam$. Fix $d \in \N$ and for each $\lam \in U$ consider a map $\Pi_\lam : X \to \R^d$ (\textit{projection}). Let $\Mk_\fin(X)$ denote the set of all finite Borel measures on $X$ and consider a collection $\Mk \subset \Mk_\fin(X)$. Our goal is to study projections $\Pi_\lam \mu, \lam \in U, \mu \in \Mk$ and provide projection theorems which hold for $\eta$-a.e. $\lam \in U$ and \textit{all} measures $\mu \in \Mk$ simultaneously. A crucial property needed for the proofs is a certain separability-like condition for the set $\Mk$ and a transversality condition for the family $\Pi_\lam$. For the former one, we make the following definitions.

\begin{defn}
Let $\mu$ and $\nu$ be finite Borel measures on a metric space $(X,\rho)$. The \textbf{relative dimension} of $\mu$ with respect to $\nu$ is
\[ \dim(\mu || \nu, \rho) := \inf \left\{ \eps > 0 : - \eps < \underset{x \sim \mu}{\essinf} \liminf \limits_{r \to 0} \frac{\log \frac{\mu(B(x,r))}{\nu(B(x,r))}}{\log r} \leq  \underset{x \sim \mu}{\esssup} \limsup \limits_{r \to 0} \frac{\log \frac{\mu(B(x,r))}{\nu(B(x,r))}}{\log r} < \eps  \right\}, \]
where $B(x, r)$ denotes the open $r$-ball in metric $\rho$.
\end{defn}

\begin{rem}
We adopt the convention that $\log \frac{\mu(B(x,r))}{0} = + \infty$ if $\mu(B(x,r)) > 0$. In particular $\dim(\mu || \nu, \rho) = \infty$ if $\mu(X \setminus \supp(\nu)) > 0$.
\end{rem}

\begin{defn}\label{defn: relative dimension separability}
Let $(X,\rho)$ be a metric space. Let $\Mk \subset \Mk_\fin(X)$ be a collection of finite Borel measures on $X$. We say that $\Mk$ is \textbf{relative dimension separable} (with respect to $\rho$)  if there exists a countable set $\Vk \subset \Mk_\fin(X)$ such that for every $\mu \in \Mk$ and $\eps > 0$ there exists $\nu \in \Vk$ with $\dim(\mu || \nu, \rho) < \eps$.
\end{defn}

We will also require measures in $\Mk$ to satisfy a mild regularity condition.

\begin{defn}
A finite Borel measure $\mu$ on a metric space $(X,\rho)$ is \textbf{weakly diametrically regular} if
\[ \lim \limits_{r \to 0} \frac{\log \frac{\mu(B(x,r))}{\mu(B(x, 2r))}}{\log r} = 0 \text{ for } \mu\text{-a.e. } x \in X. \]
\end{defn}

\begin{rem}
It is easy to see that every measure for which the local dimension exists at $\mu$-a.e. point (see Definition \ref{defn:dim}) is weakly diametrically regular. Moreover, any finite Borel measure on $\R^n$ is weakly diametrically regular (with respect to the Euclidean metric) - see \cite[Lemma 1]{barreira2001hausdorff}.
\end{rem}
Our principal assumptions on the families $\{ \rho_\lam : \lam \in U \},\ \{ \Pi_\lam : \lam \in U \}$ and measure $\eta$ are as follows:

\begin{enumerate}[start=1,label={(A\arabic*)}]
\item\label{it: metric compability} for every $\lam_0 \in U$ and every $\xi \in (0,1)$ there exists a neighbourhood $U'$ of $\lam_0$ and a constant $0 < H = H(\xi, \lam_0) < \infty$ such that for every $\lam \in U'$
\[ H^{-1} \rho_\lam(x,y)^{1+\xi} \leq \rho_{\lam_0}(x,y) \leq H \rho_\lam(x,y)^{1 - \xi} \text{ holds for every } x,y \in X,\]
\item\label{it: Lip} for each $\lam \in U$, map $\Pi_\lam$ is Lipschitz in $\rho_\lam$,
\item\label{it: trans} for every $\lam_0 \in U$ and $\eps>0$ there exist a neighbourhood $U'$ of $\lam_0$ and a constant $K = K(\lam_0, \eps)$ such that for every $x,y \in X,\ r>0, \delta>0$
\[ \eta(\{ \lam \in U' : |\Pi_\lam (x) - \Pi_\lam (y)| < \rho_\lam (x,y) r \text{ and } \rho_\lam (x,y) \geq \delta \}) \leq K \delta^{-\eps} r^{d-\eps}.\]

\end{enumerate}

Assumption \ref{it: trans} is a generalization of the classical transversality condition (see e.g. \cite[Section 14.4]{BSS} or \cite{SolomyakTransversSurvey}). We will provide an embedding theorem with the following regularity property for the embedding map.

\begin{defn}\label{defn: nearly biLip}
Let $(X,\rho_X)$ and $(Y, d_{Y})$ be metric spaces.
Let $\Pi: X \to Y$ be a Lipschtiz map and let $\mu$ be a finite Borel measure on $X$. We say that $\Pi$ is \textbf{$\mu$-nearly bi-Lipschitz} if $\mu$-a.e. $x \in X$ has the property that for every $\alpha \in (0,1)$ there exists $C=C(x,\alpha)$ such that
\[ \rho_X(x,y) \leq C \rho_Y(\Pi(x), \Pi(y))^\alpha \text{ for every } y \in X. \]
\end{defn}

The main result of this paper is the following. For the definitions of Hausdorff and Assouad dimensions see Section \ref{sec: prelim}.

\begin{thm}\label{thm: general proj}
Let $X$ be a compact topological space and let $U$ be a hereditary Lindel\"of topological space. Let $\{ \rho_\lam : \lam \in U \}$ be a family of metrics on $X$ compatible with its topology and let $\{ \Pi_\lam : \lam \in U \}$ be a family of maps $\Pi_\lam : X \to \R^d$ satisfying assumptions \ref{it: metric compability} - \ref{it: trans}. Let $\Mk \subset \Mk_\fin(X)$ be a collection of finite Borel measures on $X$, such that for every $\rho_\lam,\ \lam \in U$, the family $\Mk$ is relative dimension separable and each $\mu \in \Mk$ is weakly diametrically regular. Then for $\eta$-a.e. $\lam \in U$, the following holds simultaneously for all $\mu \in \Mk$:
\begin{enumerate}
	\item\label{it: theorem general hdim} $\lhdim \Pi_\lam \mu = \min\{ d, \lhdim(\mu, \rho_\lam)\}$ and $\uhdim \Pi_\lam \mu = \min\{ d, \uhdim(\mu, \rho_\lam)\}$,
	\item\label{it: theorem general adim} if $\adim(X, \rho_\lam) < d$, then $\Pi_\lam$ is $\mu$-nearly bi-Lipschitz in metric $\rho_\lam$.
\end{enumerate}
\end{thm}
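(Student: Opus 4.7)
The strategy is: first establish the desired equalities for a fixed measure $\nu \in \Vk$ by a classical transversality/energy estimate; next take a countable intersection over $\Vk$, which is possible because $U$ is hereditary Lindel\"of and $\Vk$ is countable, so as to obtain a single full-$\eta$-measure set of good parameters; finally transfer the conclusion to all $\mu \in \Mk$ using the relative dimension separability together with a Vitali/Besicovitch covering. The upper bounds $\uhdim \Pi_\lam \mu \leq \min\{d, \uhdim(\mu,\rho_\lam)\}$ and $\lhdim \Pi_\lam \mu \leq \min\{d, \lhdim(\mu,\rho_\lam)\}$ are immediate from \ref{it: Lip} and the fact that $\Pi_\lam(X) \subset \R^d$, so all the difficulty lies in the lower bounds and in \ref{it: theorem general adim}.

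Fix $\lam_0 \in U$, a relatively compact neighbourhood $U' \ni \lam_0$, and a small $\xi \in (0,1)$ supplied by \ref{it: metric compability}. For a fixed $\nu \in \Vk$ and rational $s < \min\{d, \lhdim(\nu,\rho_{\lam_0})(1-\xi)\}$ I estimate the expected $s$-energy
\[ \int_{U'} \iint |\Pi_\lam x - \Pi_\lam y|^{-s}\, d\nu(x)\, d\nu(y)\, d\eta(\lam) \]
by dyadic decomposition of the inner integral in $\rho_{\lam_0}$, turning $\rho_{\lam_0}$-separations into lower bounds for $\rho_\lam$ via \ref{it: metric compability} and integrating out $\lam$ by \ref{it: trans}; the $\delta^{-\eps}$ loss in \ref{it: trans} is absorbed by choosing the transversality parameter strictly smaller than the available dimension gap. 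Finiteness of this triple integral gives, via Fubini, that the $s$-energy of $\Pi_\lam \nu$ is finite for $\eta$-a.e. $\lam \in U'$, hence $\lhdim \Pi_\lam \nu \geq s$. The corresponding upper-Hausdorff statement follows by restricting $\nu$ to a subset where $\underline{d}(\nu,\cdot,\rho_\lam)$ is close to $\uhdim(\nu,\rho_\lam)$ and rerunning the estimate. A countable intersection over $\nu \in \Vk$, over rational $s,\xi$, and over a countable cover of $U$ by such neighbourhoods $U'$ produces a set $U^* \subset U$ of full $\eta$-measure on which both dimension equalities hold simultaneously for every $\nu \in \Vk$.

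Fix now $\lam \in U^*$ and $\mu \in \Mk$. Given $\eps > 0$, pick $\nu = \nu_\eps \in \Vk$ with $\dim(\mu\,\|\,\nu, \rho_\lam) < \eps$, so that at $\mu$-a.e. $x$ the inequalities $r^{\eps}\nu(B_\lam(x,r)) \leq \mu(B_\lam(x,r)) \leq r^{-\eps}\nu(B_\lam(x,r))$ hold for all sufficiently small $r$. Since $(\Pi_\lam\mu)(B(\Pi_\lam x, r)) = \mu(\Pi_\lam^{-1}B(\Pi_\lam x, r))$, I cover the preimage, up to a $\mu$-null set, by a disjoint Vitali/Besicovitch family of small $\rho_\lam$-balls $B_\lam(y_i, r_i)$ whose doubles lie inside $\Pi_\lam^{-1}B(\Pi_\lam x, Cr)$ (possible because $\Pi_\lam$ is Lipschitz), apply the relative-dimension bound ball-by-ball, and sum, obtaining $\mu(\Pi_\lam^{-1}B(\Pi_\lam x, r)) \leq r^{-O(\eps)}(\Pi_\lam\nu)(B(\Pi_\lam x, Cr))$, with a symmetric lower bound. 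Weak diametric regularity of $\mu$ and of $\Pi_\lam\nu$ absorbs the constant $C$ in the log-limit, so the local dimensions of $\Pi_\lam\mu$ and $\Pi_\lam\nu$ at $\Pi_\lam x$ differ by $O(\eps)$ at $\mu$-a.e. $x$. Combining this with the analogous pointwise proximity of the unprojected local dimensions of $\mu$ and $\nu$ and with the step-2 conclusion for $\nu$, and sending $\eps \to 0$, completes \ref{it: theorem general hdim}. This transfer is the main obstacle: relative dimension gives control only on balls, whereas projections naturally produce geometrically complicated preimage slabs, and the Vitali/Besicovitch covering must be tuned so that the $r^{-O(\eps)}$ loss does not overwhelm the dimension gap and the comparison is genuinely pointwise at $\mu$-a.e. $x$.

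For \ref{it: theorem general adim} I instead establish the EDE (exponential-distance-from-the-enemy) property. For each rational $\eps > 0$ and each dyadic scale $n$, cover $X$ at scale $2^{-n(1+\eps)}$ by at most $C\, 2^{n(1+\eps)\adim(X,\rho_\lam)}$ balls, and for each pair of ball centres $(x_i, y_j)$ with $\rho_{\lam_0}(x_i,y_j) \sim 2^{-n}$ apply \ref{it: trans} with $r = 2^{-n\eps}$ and $\delta \sim 2^{-n}$. Choosing the transversality parameter strictly below $\eps(d - \adim(X,\rho_\lam))/(1+\eps) > 0$ (where the assumption $\adim(X,\rho_\lam) < d$ is used) makes the total $\eta$-measure of bad parameters summable in $n$, and Borel--Cantelli delivers the $\eps$-EDE property for $\nu$-a.e. $x$ and $\eta$-a.e. $\lam$ simultaneously for every $\nu \in \Vk$. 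The passage to $\mu \in \Mk$ repeats the Vitali/Besicovitch covering argument from the previous paragraph: the $\eps$-EDE failure set $A_\lam \subset X$ has $\nu$-measure zero for every $\nu \in \Vk$, so covering $A_\lam$ by small $\rho_\lam$-balls and invoking the relative-dimension estimate gives $\mu(A_\lam) = 0$ for every $\mu \in \Mk$. Letting $\eps$ range over the rationals and intersecting yields the nearly bi-Lipschitz property.
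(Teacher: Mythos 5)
Your strategy for part \ref{it: theorem general hdim} is genuinely different from the paper's, and it has a gap that I do not see how to close. You propose to first establish $\lhdim \Pi_\lam \nu \geq s$ for $\eta$-a.e.~$\lam$ and each fixed $\nu \in \Vk$, then pass to a general $\mu \in \Mk$ by arguing that $\ld(\Pi_\lam\mu, \Pi_\lam x)$ and $\ld(\Pi_\lam\nu, \Pi_\lam x)$ agree up to $O(\eps)$ at $\mu$-a.e.~$x$, and finally combine with ``the step-2 conclusion for $\nu$.'' The trouble is an essential-infimum mismatch: the energy estimate for $\nu$ gives a lower bound for $\ld(\Pi_\lam\nu, z)$ at $\Pi_\lam\nu$-a.e.~$z$, whereas to run your transfer you need that bound at $\Pi_\lam\mu$-a.e.~$z$. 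The hypothesis $\dim(\mu\,\|\,\nu, \rho_\lam) < \eps$ does \emph{not} imply $\mu \ll \nu$ (it only constrains ratios of ball measures at $\mu$-typical points and small $r$), so $\Pi_\lam\mu$-typical and $\Pi_\lam\nu$-typical points may be disjoint up to measure zero, and the lower bound you import for $\Pi_\lam\nu$ is invisible to $\Pi_\lam\mu$. Relatedly, $\lhdim(\nu,\rho_\lam)$, which is the essential infimum of $\ld(\nu,\cdot)$ over $\nu$-typical points, need not be close to $\lhdim(\mu,\rho_\lam)$; the relative dimension bound only relates these local dimensions at $\mu$-typical points. Fixing this would require something like a cross-energy estimate for the pair $(\mu,\nu)$, which you do not set up.

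The paper avoids this entirely by never proving anything about $\Pi_\lam\nu$. Instead, for each $\mu$ and a nearby $\nu \in \Vk$, it introduces the set $A_{\lam_0,q,\eps,R}(\mu,\nu)$ on which the $\mu$/$\nu$ ball comparison, the lower-dimension bound for $\mu$, and weak diametric regularity of $\mu$ hold uniformly for all $r<R$, together with the $\nu$-only set $G_{\lam_0,q,\eps,R}(\nu)$. Lemma 6.1 then bounds $\mu(B(x,r)\cap A_\mu)$ by $r^{-O(\eps)}\nu(B(x,r))$ using a single witness point in $B(x,r)\cap A_\mu$ rather than a ball-by-ball covering, which sidesteps the problem that the thresholds $R(y_i)$ in a covering are point-dependent and the covering radii are not uniformly bounded below. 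This lets the paper bound the $s$-energy of $\Pi_\lam(\mu|_{A_\mu})$ by sums involving only $\nu$, and hence prove a uniform bound with the \emph{supremum over $\mu$ inside the $\lam$-integral}: $\int_{U'} \sup_{\mu} \Ek_s(\Pi_\lam \mu|_{A_\mu})\, d\eta(\lam) < \infty$. That inner supremum is exactly what produces a single exceptional set of $\lam$'s for all $\mu$ in a class, which your ``establish for $\nu$, then transfer'' plan does not produce. A secondary issue with your Vitali/Besicovitch transfer is that Besicovitch is not available on a general compact metric space $(X,\rho_\lam)$; the paper instead uses the elementary packing cover of Lemma 4.1.

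For part \ref{it: theorem general adim} your EDE-plus-Borel--Cantelli outline is closer in spirit to what the paper does (the paper's Lemma 6.6 plays exactly the role of your Assouad-counting step), and your observation that a $\nu$-null set can be shown $\mu$-null under the relative dimension bound is a reasonable sketch. But the same structural issue recurs: the paper again inserts the supremum over $\mu$ inside the $\lam$-integral via the $A_\mu$ machinery rather than proving the statement for $\nu$ and transferring, precisely because the transfer is the delicate step. I would encourage you to look at how Lemma 6.1 replaces the ball-by-ball covering argument; that is the crux.
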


Here and in the rest of the paper, the precise meaning of \textit{simultaneously for all} $\mu \in \Mk$ is, for instance in the case of the first part of point \eqref{it: theorem general hdim},
\[ \eta \left( \left\{ \lam \in U : \underset{\mu \in \Mk}{\exists}\   \lhdim \Pi_\lam \mu \neq \min\{ d, \lhdim(\mu, \rho_\lam)\} \right\} \right) = 0\]
and likewise for the other statements

\section{Applications}

Let us begin with listing several families of maps $\Pi_\lam$ and metrics $\rho_\lam$ satisfying conditions \ref{it: metric compability} - \ref{it: trans}.
 
\subsection{Orthogonal projections}
 
 \begin{example}[Orthogonal projections]\label{ex: orthogonal proj}
 Fix $1 \leq d < n$. Let $\Gr(d,n)$ be the Grassmannian of $d$-dimensional linear subspaces in $\R^n$. Let $\eta$ be the unique Borel probability measure on $\Gr(d,n)$ which is invariant under the action of the orthogonal group $O(d)$, see \cite[Section 3.9]{mattila}. Given a compact set $X \subset \R^n$, for $V \in \Gr(d,n)$ let $P_V : X \to \R^d$ denote the orthogonal projection onto $V$ (which one can identify with $\R^d$). Let $\rho$ be the Euclidean metric on $X$.  Setting $U = \Gr(d,n)$ and $\rho_V = \rho$ we obtain families $\{ P_V : V \in U \},\ \{ \rho_V : V \in U \}$ satisfying \ref{it: metric compability} - \ref{it: trans}, with \ref{it: trans} following from \cite[Lemma 3.11]{mattila}. That is, there exists a constant $c>0$
 \begin{equation}\label{eq:orthotrans}
 	\eta\left(\left\{V\in\Gr(d,n):\|P_V(x)-P_V(y)\|\leq r\|x-y\|\right\}\right)\leq c\cdot\min\{1,r^d\}
 \end{equation}
for every $x,y\in X$ and $r>0$.
 \end{example}
 
 Given a compact set $X \subset \R^n$ one cannot expect the family of all finite Borel measures on $X$ to be relative dimension separable, as conclusions of Theorem \ref{thm: general proj} might fail for it.
 
 \begin{example}\label{eq: orthogonal proj non example}
 Let $X$ be the closed unit disc in $\R^2$ centred at zero. For $V \in \Gr(1,2)$, let $\mu_V$ be the $1$-dimensional Hausdorff measure restricted to the unit interval passing through the origin and perpendicular to the subspace $V$. Then clearly $\hdim \mu_V =1$ but $\hdim P_V \mu_V = 0$. Therefore, by Theorem \ref{thm: general proj}, the family $\{\mu_V : V \in \Gr(1,2)\}$ is not relative dimension separable and hence neither is the larger family $\Mk_\fin(X)$. 
 \end{example}
 
 \subsection{Conformal IFS}
 
 One may find relative dimension separable families within measures of dynamical origin. Our main focus is on ergodic measures on self-conformal sets. Let us now describe those.
 
 For a compact connected set $V \subset \R^n$ with $V = \overline{\Int(V)}$, a function $f: V \to V$ is called a conformal $C^{1+\theta}$ map if it extends to a diffeomorphism $f : W \to W$ of an open connected set $W \supset V$ such that for every $x \in V$ the differential $f'(x) = D_x f$ is a non-singular similitude and the map $V \ni x \mapsto D_x f$ is $\theta$-H\"older for some $\theta > 0$. Note that in this case the operator norm $\| D_x f \|$ is simply the corresponding coefficient of similarity.
 
 \begin{example}[Conformal IFS]\label{ex: conformal IFS}
Let $\Ak$ be a finite set and let $V \subset \R^n$ be a compact connected set with $V = \overline{\Int(V)}$. For each $i \in \Ak$, let $f_i : V \to V$ be a conformal $C^{1+\theta}$ map such that $0 < \|f'_i(x)\| < 1$ for every $x \in V$. We call the collection $\Fk = (f_i)_{i \in \Ak}$ a \textbf{conformal $C^{1+\theta}$ IFS}. Let $\Sigma = \Ak^\N$ be the symbolic space over the alphabet $\Ak$. We can associate to $\Fk$ a \textbf{natural projection map} $\Pi_\Fk : \Sigma \to V$ defined as
\begin{equation}\label{eq: natural projection} \Pi_\Fk(\om) = \lim \limits_{n \to \infty} f_{\om_1} \circ f_{\om_2} \circ \cdots \circ f_{\om_n}(x),
\end{equation}
where $\om = (\om_1, \om_2, \ldots)$ and $x$ is any point in $V$. The set $\Lambda = \Pi_\Fk(\Sigma)$ is called the \textbf{attractor} of $\Fk$ and it is the unique non-empty compact set $\Lambda$ satisfying
\[ \Lambda = \bigcup \limits_{i \in \Ak} f_i(\Lambda). \]
Any set $\Lambda$ of this form is called a \textbf{self-conformal set}. An \textbf{ergodic measure on $\Lambda$} is a measure of the form $\Pi_\Fk \mu$, where $\mu$ is an ergodic shift-invariant Borel probability measure on $\Ak^\N$. If $\mu$ is additionally a Gibbs measure corresponding to a H\"older continuous potential (see Definition \ref{defn: Gibbs measure}), then $\Pi_\Fk \mu$ is called a \textbf{Gibbs measure on $\Lambda$}, while if $\mu$ is a Bernoulli measure then $\Pi_\Fk \mu$ is called a \textbf{self-conformal measure}. We say that $\Fk$ satisfies the \textbf{Strong Separation Condition} if sets $f_{i}(\Lambda), i \in \Ak$ are pairwise disjoint. We denote by $\Ek_\sigma(\Sigma)$ and $G_\sigma(\Sigma)$ the collections of all, respectively, ergodic and Gibbs measures on $\Sigma$. If $\Fk$ consists of similarity maps, i.e. $f_i(x) = \lam_i O_i x + t_i$, where $\lam_i \in (0,1), t_i \in \R^n$ and $O_i$ are orthogonal $n \times n$ matrices, then the corresponding attractor is called a \textbf{self-similar set} and $\Pi_\Fk \mu$ is called a \textbf{self-similar measure} if $\mu$ is Bernoulli.

Given a conformal $C^{1+\theta}$ IFS $\Fk$, it will be convenient for us to consider an associated metric $\rho_{\Fk}$ on $\Sigma$ defined as follows. Given two infinite sequences $\om,\tau \in \Sigma$, let $\om \wedge \tau$ denote the longest common prefix of $\om$ and $\tau$. For a finite word $\om \in \Sigma^* = \bigcup \limits_{k=0}^\infty \Ak^k$, let $f_\om = f_{\om_1} \circ\cdots \circ f_{\om_k}$, where $\om = (\om_1, \ldots, \om_k)$. If now $\om,\tau \in \Sigma$ and $\om \neq \tau$, we set
\begin{equation}\label{eq: adapted metric} \rho_\Fk(\om, \tau) = \|f'_{\om \wedge \tau} \|,
\end{equation}
where $\| \cdot \|$ is the supremum norm on $V$. It is easy to check that $\rho_\Fk$ is a metric on $\Sigma$ and the natural projection map $\Pi_\Fk : \Sigma \to \R^n$ is Lipschitz in $\rho_\Fk$. If $\Fk$ satisfies the Strong Separation Condition, then $\Pi_\Fk$ is bi-Lipschitz in $\rho_\Fk$ (see \cite[Section 14.2]{BSS}). The metric $\rho_\Fk$ is also significant as natural dynamical invariants of the system can be expressed in terms of dimensions calculated with respect to $\rho_\Fk$. Namely, an ergodic shift-invariant measure $\mu$ on $\Sigma$ is exact-dimensional with respect to $\rho_\Fk$ and satisfies
\begin{equation}\label{eq: dim symbolic}  \hdim (\mu, \rho_\Fk) = \frac{h(\mu)}{\chi(\mu, \Fk)},
\end{equation}
where $h(\mu)$ is the Kolmogorov-Sinai entropy of $\mu$ with respect to the left shift $\sigma : \Sigma \to \Sigma$ and $\chi(\mu, \Fk)$ is the Lyapunov exponent of $\mu$ defined as
\[ \chi(\mu, \Fk) = - \int \log \|f'_{\om_1}(\Pi_\Fk(\sigma \om))\|d\mu(\om).\]
Similarly, let us define the pressure function $P_\Fk : (0, \infty) \to \R$ as
\begin{equation}\label{eq: pressure def}
	P_\Fk(s) = \lim_{n\to\infty}\frac{1}{n}\log\left(\sum_{\om\in\Ak^n}\|f'_\om\|^{s}\right).
\end{equation}
It is well-known that there exists a unique solution $s(\Fk)$ of the equation (the so-called Bowen's formula)
\begin{equation}\label{def: Bowen formula}
P_\Fk(s(\Fk)) = 0,
\end{equation}
which satisfies
\[ \hdim(\Sigma, \rho_\Fk) = \adim(\Sigma, \rho_\Fk)  = s(\Fk). \]
The Hausdorff dimension being equal to $s(\Fk)$, and in particular, $\Sigma$ being $s(\Fk)$-Ahlfors regular, follows by Bedford \cite{Bedford_selfconf} and the assertion on the Assouad dimension (denoted by $\dim_A$) follows by the $s(\Fk)$-Ahlfors regularity and \cite[Theorem~6.4.1]{Fraser_book}.
\end{example}

Our basic example of a relative dimension separable collection of measures is the following one.

\begin{prop}\label{prop: rel dim sep IFS}
Let $\Fk$ be a conformal $C^{1+\theta}$ IFS on $\R^n$ and let $\rho_\Fk$ be the corresponding metric on the symbolic space $\Sigma$. Then the collection of all ergodic shift-invariant Borel probability measures on $\Sigma$ is relative dimension separable with respect to $\rho_\Fk$.
\end{prop}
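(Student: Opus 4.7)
The plan is to take $\Vk$ to be the countable family of all $k$-step Markov measures on $\Sigma$ defined by strictly positive rational transition probabilities and strictly positive rational initial distributions on $\Ak^k$, as $k$ ranges over $\N$. The starting observation, linking the geometry of $\rho_\Fk$ to symbolic dynamics, is that balls are cylinders: since $\|f'_{\om|_n}\|$ strictly decreases to $0$ in $n$, for each $\om\in\Sigma$ and $r>0$ there is a unique integer $n(\om,r)$ such that the ball $B(\om,r)$ in $\rho_\Fk$ equals the cylinder $[\om|_{n(\om,r)}]$, and bounded distortion gives $\log r = \log \|f'_{\om|_{n(\om,r)}}\| + O(1)$.

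Fix an ergodic shift-invariant $\mu$ on $\Sigma$ and $\eps>0$. Combining Shannon--McMillan--Breiman with Birkhoff's theorem applied to $\psi(\om)=\log\|f'_{\om_1}(\Pi_\Fk(\sigma\om))\|$ (and bounded distortion to compare $\log\|f'_{\om|_n}\|$ with Birkhoff sums), one gets, for $\mu$-a.e.\ $\om$, $-\tfrac{1}{n}\log\mu([\om|_n])\to h(\mu)$ and $-\tfrac{1}{n}\log\|f'_{\om|_n}\|\to\chi(\mu,\Fk)$. For any $\nu\in\Vk$ of Markov order $k$ with transition kernel $\pi$, a further application of Birkhoff to $g_\pi(\om)=-\log\pi(\om_{k+1}|\om_1\ldots\om_k)$ yields $-\tfrac{1}{n}\log\nu([\om|_n])\to h_\nu^{(\mu)}:=-\int\log\pi(\om_{k+1}|\om_1\ldots\om_k)\,d\mu$ for $\mu$-a.e.\ $\om$. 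Substituting $n=n(\om,r)$ into the ball-cylinder identification shows that the limit
\[
\lim_{r\to 0}\frac{\log\bigl(\mu(B(\om,r))/\nu(B(\om,r))\bigr)}{\log r}=\frac{h(\mu)-h_\nu^{(\mu)}}{\chi(\mu,\Fk)}
\]
exists and is $\mu$-a.e.\ constant. Since all maps are uniform contractions, $\chi(\mu,\Fk)>0$, and the definition of relative dimension then yields $\dim(\mu\|\nu,\rho_\Fk)=|h(\mu)-h_\nu^{(\mu)}|/\chi(\mu,\Fk)$.

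It remains to realize any prescribed small value of $|h(\mu)-h_\nu^{(\mu)}|$ within $\Vk$. By Kolmogorov--Sinai, for $k$ large enough the conditional entropy $h_k^\mu:=H_\mu(\om_{k+1}|\om_1\ldots\om_k)$ is within $\eps\chi(\mu,\Fk)/2$ of $h(\mu)$. The ideal Markov approximation $\pi^{*}(b|a):=\mu([ab])/\mu([a])$ (on indices with $\mu([a])>0$) gives $h_{\nu^{*}}^{(\mu)}=h_k^\mu$; approximating $\pi^{*}$ by strictly positive rationals (with any positive rational on indices where $\mu([a])=0$, since those contribute nothing to $h_\nu^{(\mu)}$) produces the desired $\nu\in\Vk$ with $|h_\nu^{(\mu)}-h_k^\mu|<\eps\chi(\mu,\Fk)/2$, so $\dim(\mu\|\nu,\rho_\Fk)<\eps$. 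The main technical point is the ball--cylinder identification together with the ergodic-theoretic evaluation of $-\tfrac{1}{n}\log\nu([\om|_n])$ along $\mu$-typical orbits; a brief case check handles atomic ergodic measures supported on periodic orbits, where $\pi^{*}$ is $\{0,1\}$-valued but rational approximations close to $1$ on the used transitions still suffice.
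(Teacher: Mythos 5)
Your argument is correct, but it takes a genuinely different route from the paper's. The paper factors Proposition~\ref{prop: rel dim sep IFS} through the more general Proposition~\ref{prop: general ergodic rds}, which works for any metric on $\Sigma$ satisfying conditions (i)--(ii) and does not involve the Lyapunov exponent at all: after producing a Markov measure $\nu$ with $h(\mu\|\nu)<\eps$ (Proposition~\ref{prop:separability}, where the transition kernel of $\nu$ is required to match the zeros of the $k$-th Markov approximation to guarantee $\mu|_n\ll\nu|_n$), the paper cites Gray's relative-entropy ergodic theorem (Theorem~\ref{thm:relative entropy ergodic thm}) to compare $\mu([\om|_n])$ and $\nu([\om|_n])$ directly, then converts to balls via Lemma~\ref{lem: ball cylinder}, obtaining the crude but sufficient bound $\dim(\mu\|\nu,\rho)\le A\eps$ with $A=1/(-\log\gamma)$. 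You instead take $\Vk$ to be \emph{strictly positive} rational Markov measures (which makes $\mu|_n\ll\nu|_n$ automatic and removes the need to track the zero set), re-derive the relevant piece of the relative-entropy ergodic theorem from SMB plus Birkhoff (for $g_\pi$, integrated against $\mu$), add a Birkhoff/bounded-distortion argument for $-\tfrac{1}{n}\log\|f'_{\om|_n}\|\to\chi(\mu,\Fk)>0$, and use the exact ball--cylinder identity (an ultrametric feature that the paper only records as the two-sided sandwich $[\om|_{n+1}]\subset B(\om,r)\subset[\om|_n]$). The payoff of your route is the sharp formula $\dim(\mu\|\nu,\rho_\Fk)=h(\mu\|\nu)/\chi(\mu,\Fk)$, which is cleaner conceptually and makes the approximation step transparent; the payoff of the paper's route is generality, since Proposition~\ref{prop: general ergodic rds} is reused for the non-autonomous and random IFS examples where no single IFS Lyapunov exponent is available. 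Two small points worth spelling out if you write this up in full: (a) the measures in your $\Vk$ with arbitrary positive rational initial distributions need not be shift-invariant, but that is fine since the reference measures $\nu$ need only be finite Borel measures, and $\log\nu_0(\om|_k)$ contributes a bounded error; and (b) after approximating $\pi^*$ on its support by rationals and inserting small positive values where $\pi^*(\cdot\,|\,a)=0$, you must renormalize each row to sum to $1$, which perturbs the entries slightly but keeps the relative-entropy error as small as desired.
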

The above proposition follows from a more general Proposition \ref{prop: general ergodic rds}. An immediate consequence, as the regular dimension separability is invariant under bi-Lipschitz mappings (see Proposition \ref{prop: rel dim sep under biLip}), is the following application of Theorem \ref{thm: general proj} with the use of transversality of orthogonal projections in Example \ref{ex: orthogonal proj}.

\begin{thm}\label{thm: orth proj ergodic}
Let $\Fk$ be a $C^{1+\theta}$ conformal IFS on $\R^n$ with attractor $\Lambda$, satisfying the Strong Separation Condition. Then the family of all ergodic measures on $\Lambda$ is relative dimension separable and hence for every $ 1 \leq d < n$, for almost every $V \in \Gr(d,n)$
\[ \hdim P_V \mu = \min \{ d, \hdim \mu \} \text{ simultaneously for all ergodic measures } \mu \text{ on } \Lambda. \]
\end{thm}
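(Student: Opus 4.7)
The plan is to combine Proposition \ref{prop: rel dim sep IFS} with the invariance of relative dimension separability under bi-Lipschitz maps, and then feed the result into the abstract universal projection theorem (Theorem \ref{thm: general proj}) specialized to the orthogonal projection family of Example \ref{ex: orthogonal proj}. The Strong Separation Condition is used precisely to push the separability from the symbolic space down to the attractor.

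First I would invoke Proposition \ref{prop: rel dim sep IFS} to obtain that the collection $\Ek_\sigma(\Sigma)$ of all ergodic shift-invariant Borel probability measures on $\Sigma = \Ak^\N$ is relative dimension separable with respect to the adapted metric $\rho_\Fk$ defined in \eqref{eq: adapted metric}. Since $\Fk$ satisfies the Strong Separation Condition, the natural projection $\Pi_\Fk : (\Sigma, \rho_\Fk) \to (\Lambda, \|\cdot\|)$ is bi-Lipschitz (as noted after \eqref{eq: adapted metric}). Every ergodic measure on $\Lambda$ is of the form $\Pi_\Fk \mu$ for some $\mu \in \Ek_\sigma(\Sigma)$, so the collection of ergodic measures on $\Lambda$ is the pushforward of $\Ek_\sigma(\Sigma)$ under a bi-Lipschitz map. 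By Proposition \ref{prop: rel dim sep under biLip} (invariance of relative dimension separability under bi-Lipschitz mappings), this pushforward collection is relative dimension separable in the Euclidean metric on $\Lambda$.

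Next I would verify the hypotheses of Theorem \ref{thm: general proj} for the orthogonal projection setup. Take $X = \Lambda$, $U = \Gr(d,n)$ with measure $\eta$ the $O(n)$-invariant probability measure, $\rho_V$ equal to the Euclidean metric for every $V$, and $\Pi_V = P_V$. Since $\rho_V$ does not depend on $V$, condition \ref{it: metric compability} is trivial, and \ref{it: Lip} is immediate as orthogonal projections are $1$-Lipschitz. The transversality condition \ref{it: trans} follows directly from \eqref{eq:orthotrans}. Weak diametric regularity of every measure $\mu \in \Mk$ is automatic by \cite[Lemma 1]{barreira2001hausdorff}, as all our measures live on $\R^n$. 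Therefore Theorem \ref{thm: general proj}\eqref{it: theorem general hdim} applies, giving a single full-measure set of $V \in \Gr(d,n)$ on which, simultaneously for every ergodic measure $\mu$ on $\Lambda$,
\[ \lhdim P_V \mu = \min\{d, \lhdim \mu\} \quad \text{and} \quad \uhdim P_V \mu = \min\{d, \uhdim \mu\}. \]

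Finally, since any ergodic shift-invariant measure $\nu$ on $\Sigma$ is exact-dimensional with respect to $\rho_\Fk$ with dimension $h(\nu)/\chi(\nu, \Fk)$, and $\Pi_\Fk$ is bi-Lipschitz, each ergodic measure $\mu = \Pi_\Fk\nu$ on $\Lambda$ is exact-dimensional, so $\lhdim \mu = \uhdim \mu = \hdim \mu$. Consequently the two equalities above collapse to the single statement $\hdim P_V \mu = \min\{d, \hdim \mu\}$ for all ergodic $\mu$ on $\Lambda$ simultaneously, which is what we wanted. The only genuine obstacle here is step one, namely ensuring that $\Ek_\sigma(\Sigma)$ is relative dimension separable with respect to $\rho_\Fk$; but this is exactly what Proposition \ref{prop: rel dim sep IFS} provides, so the present theorem is a clean combination of the abstract machinery with the dynamical input already established.
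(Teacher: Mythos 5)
Your proposal is correct and follows exactly the route the paper sketches: Proposition \ref{prop: rel dim sep IFS} on $\Sigma$, bi-Lipschitz pushforward via Proposition \ref{prop: rel dim sep under biLip} using the Strong Separation Condition, Theorem \ref{thm: general proj} applied to the orthogonal projection family of Example \ref{ex: orthogonal proj}, with exact-dimensionality of ergodic measures under $\rho_\Fk$ closing the gap between upper/lower Hausdorff dimensions. The paper treats this as an immediate corollary; you have simply written out the steps explicitly, and all of them are justified.
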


A less trivial application is the result saying that without assuming any separation, the same holds for all Gibbs measures on a self-conformal set.
\begin{thm}\label{thm: main IFS}
Let $\Lambda$ be a self-conformal set on $\R^n$. The collection of all Gibbs measures on $\Lambda$ is relative dimension separable. Consequently, for every $1 \leq d < n$, for almost every $V \in \Gr(d,n)$
\[ \hdim P_V  \mu = \min \{ d, \hdim\mu \} \text{ simultaneously for all Gibbs measures } \mu \text{ on } \Lambda. \]
\end{thm}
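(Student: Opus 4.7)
The plan is to reduce relative dimension separability on $(\Lambda, \|\cdot\|)$ to sup-norm approximation of H\"older potentials on the symbolic side. Fix the $C^{1+\theta}$ conformal IFS $\Fk = (f_i)_{i\in\Ak}$ generating $\Lambda$; every Gibbs measure on $\Lambda$ is of the form $\nu_\psi := (\Pi_\Fk)_* \mu_\psi$ for some H\"older continuous potential $\psi$ on $\Sigma = \Ak^{\N}$. I would take $\Phi$ to be the countable set of rational-valued locally constant potentials and declare the candidate witness family $\Wk = \{\nu_\phi : \phi \in \Phi\} \subset \Mk_\fin(\Lambda)$. The task is then to show that for every H\"older $\psi$ and every $\eps' > 0$ one can choose $\phi \in \Phi$ with $\dim(\nu_\psi \| \nu_\phi, \|\cdot\|) < \eps'$; the simultaneous Marstrand--Mattila conclusion then follows from Theorem \ref{thm: general proj} applied to the transversal orthogonal projection family of Example \ref{ex: orthogonal proj}, using that weak diametric regularity is automatic on $\R^n$.

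For the core comparison I would pick $\phi \in \Phi$ with $\|\phi - \psi\|_\infty < \eps_0$ (the precise $\eps_0$ chosen later) and estimate $\nu_\psi(B(x,r))$ and $\nu_\phi(B(x,r))$ through a stopping-time partition. Let $\Lambda_r = \{\om \in \Ak^* : \|f'_\om\| \leq r < \|f'_{\om|_{|\om|-1}}\|\}$; by bounded distortion, $\diam f_\om(\Lambda) \leq C_0 r$ for $\om \in \Lambda_r$, and the uniform bound $\chi_{\min} := -\log\max_i\sup_V \|f'_i\| > 0$ gives $|\om| \leq (-\log r)/\chi_{\min}$. The variational principle yields $|P(\phi) - P(\psi)| \leq \eps_0$, and the Gibbs property for both potentials then gives, uniformly over finite words,
\[ C_1^{-1} e^{-2|\om|\eps_0} \leq \frac{\mu_\psi([\om])}{\mu_\phi([\om])} \leq C_1 e^{2|\om|\eps_0}. \]
Summing over those $\om \in \Lambda_r$ with $[\om] \cap \Pi_\Fk^{-1}(B(x,r)) \neq \emptyset$ (so that $f_\om(\Lambda) \subset B(x, C_2 r)$ for a universal $C_2$), I would deduce
\[ \nu_\psi(B(x, r)) \leq C_3\, r^{-2\eps_0/\chi_{\min}}\, \nu_\phi(B(x, C_2 r)),\]
together with the same inequality after swapping $\phi$ and $\psi$.

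To replace balls of radius $C_2 r$ by balls of radius $r$, I would invoke weak diametric regularity. Since $\mu_\phi$ and $\mu_\psi$ are Gibbs measures for H\"older potentials on the full shift, they are equivalent on $\Sigma$, hence $\nu_\phi \sim \nu_\psi$ on $\Lambda$. Every finite Borel measure on $\R^n$ is weakly diametrically regular, so at $\nu_\phi$-a.e.\ $x$ (and hence at $\nu_\psi$-a.e.\ $x$ by equivalence) and for any $\delta > 0$, $\nu_\phi(B(x, C_2 r)) \leq r^{-\delta} \nu_\phi(B(x, r))$ for sufficiently small $r$, and likewise for $\nu_\psi$. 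Combining gives
\[ C_4^{-1}\, r^{2\eps_0/\chi_{\min} + \delta} \leq \frac{\nu_\psi(B(x,r))}{\nu_\phi(B(x,r))} \leq C_4\, r^{-2\eps_0/\chi_{\min} - \delta}\]
for $\nu_\psi$-a.e.\ $x$ and sufficiently small $r$, yielding $\dim(\nu_\psi \| \nu_\phi, \|\cdot\|) \leq 2\eps_0/\chi_{\min} + \delta$, which is smaller than $\eps'$ if $\eps_0, \delta$ were chosen small enough.

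The hard part, compared with the strongly separated case of Theorem \ref{thm: orth proj ergodic}, is that $\Pi_\Fk$ is only Lipschitz, so Proposition \ref{prop: rel dim sep under biLip} cannot be used to transport relative dimension separability from $(\Sigma, \rho_\Fk)$ (Proposition \ref{prop: rel dim sep IFS}) directly to $(\Lambda, \|\cdot\|)$. The Gibbs structure is what compensates: the uniform cylinder-ratio bound $e^{\pm 2|\om|\eps_0}$, combined with the logarithmic bound on $|\om|$ inside $\Lambda_r$, converts the symbolic sup-norm approximation of potentials into an $r^{O(\eps_0)}$ polynomial loss at the Euclidean ball scale. Overlaps between the images $f_\om(\Lambda)$ cause no trouble because only a union bound on stopping cylinders is needed; no injectivity of $\Pi_\Fk$ is required.
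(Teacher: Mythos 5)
Your overall strategy is the right one and closely parallels the paper's: use Lipschitz continuity of the pressure in the supremum norm and the Gibbs inequality to obtain the uniform cylinder-ratio bound $C_1^{-1}e^{-2|\om|\eps_0}\leq \mu_\psi([\om])/\mu_\phi([\om])\leq C_1 e^{2|\om|\eps_0}$ for $\|\phi-\psi\|_\infty<\eps_0$, and then convert the exponential-in-$|\om|$ loss into a polynomial-in-$r$ loss via the logarithmic bound $|\om|\lesssim(-\log r)$ on stopping cylinders. This is exactly what drives the paper's Proposition \ref{prop: Gibbs urds}; the paper then abstracts the ball-to-ball comparison into Proposition \ref{prop: Lipschitz urds to rds}, while you do the comparison directly with the stopping partition $\Lambda_r$. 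Your observation that only a union bound over stopping cylinders is required, so overlaps are harmless, is also correct and mirrors the covering argument in Proposition \ref{prop: Lipschitz urds to rds}.

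However, there is one genuine error in the ball-shrinking step. You assert that ``$\mu_\phi$ and $\mu_\psi$ are Gibbs measures for H\"older potentials on the full shift, they are equivalent on $\Sigma$, hence $\nu_\phi\sim\nu_\psi$ on $\Lambda$.'' This is false: two Gibbs measures for H\"older potentials on a mixing shift are either identical or \emph{mutually singular} (they coincide only when the potentials differ by a coboundary plus a constant); the Gibbs inequality gives cylinder ratios bounded by $e^{\pm 2\eps_0 n}$, which grows exponentially and is very far from absolute continuity. Consequently you cannot invoke the weak diametric regularity of $\nu_\phi$ at $\nu_\psi$-a.e.\ $x$, and the bound $\nu_\phi(B(x,C_2 r))\leq r^{-\delta}\nu_\phi(B(x,r))$ is unavailable where you need it. The fix is to apply weak diametric regularity only to the numerator measure $\nu_\psi$: run the stopping-cylinder estimate at radius $r/C_2$ to get the \emph{uniform} (all $x$) bound $\nu_\psi(B(x,r/C_2))\leq C r^{-2\eps_0/\chi_{\min}}\nu_\phi(B(x,r))$, then enlarge the left-hand side using regularity of $\nu_\psi$ at $\nu_\psi$-a.e.\ $x$, namely $\nu_\psi(B(x,r))\leq r^{-O(\delta)}\nu_\psi(B(x,r/C_2))$, obtained by iterating the doubling inequality $\lceil\log_2 C_2\rceil$ times. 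For the lower bound, use the swapped uniform estimate $\nu_\phi(B(x,r))\leq C r^{-2\eps_0/\chi_{\min}}\nu_\psi(B(x,C_2 r))$ followed by $\nu_\psi(B(x,C_2 r))\leq r^{-O(\delta)}\nu_\psi(B(x,r))$ at $\nu_\psi$-a.e.\ $x$. This is precisely how the paper's Proposition \ref{prop: Lipschitz urds to rds} is structured: the target-side weak diametric regularity is imposed only on $\Pi\mu$ (your $\nu_\psi$), while all estimates involving the reference measure $\nu$ (your $\nu_\phi$) are uniform in $x$, so that no a.e.\ statement about $\nu_\phi$ is ever required.
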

The above theorem follows from Theorem \ref{thm: general proj}, Example \ref{ex: orthogonal proj} and Propositions \ref{prop: Lipschitz urds to rds}, \ref{prop: Gibbs urds}.
 
 Let us now turn to \emph{parametrized} families of conformal IFS. Even in the presence of overlaps, one can obtain projection results for typical parameters if the transversality condition holds.

\begin{example}[Parametrized conformal IFS]\label{ex:parametrized IFS}
Let $\Ak$ be a finite set and let $V \subset \R^d$ be a compact connected set with $V = \overline{\Int(V)}$. For each $\lam \in U$, where $U$ is a (hereditary Lindel\"of) topological space, let $\Fk^\lam = (f_i^\lam)_{i\in \Ak}$ be a conformal $C^{1+\theta}$ IFS on $V$. Assume that there exist $0 < \gamma_1 < \gamma_2 < 1$ such that $\gamma_1 \leq \|(f^\lam_i)'(x)\| \leq \gamma_2$ for every $x \in V, \lam \in U$. Moreover, assume that the map $U \ni \lam \mapsto \Fk_\lam$ is continuous, where the distance between $\Fk^{\lam_1}$ and $\Fk^{\lam_2}$ is defined as $ \max \limits_{i \in \Ak} \left(\|f^{\lam_1}_i - f^{\lam_2}_i\| + \|(f^{\lam_1}_i)' - (f^{\lam_2}_i)'\| + \sup \limits_{x \neq y \in V} \frac{\|(f^{\lam_1}_i)'(x) - (f^{\lam_2}_i)'(y)\|}{|x-y|^\theta}\right)$. We call a parametrized family $\Fk^\lam, \lam \in U$ a \textbf{continuous family of $C^{1+\theta}$ conformal IFS}.
Let $\Sigma = \Ak^\N$ be the symbolic space, and for each $\lam \in U$, let $\Pi_\lam := \Pi_{\Fk^\lam} : \Sigma \to \R^d$ be the corresponding natural projection map \eqref{eq: natural projection}. For each $\lam \in U$, let $\rho_\lam := \rho_{\Fk^\lam}$ be the metric on $\Sigma$ corresponding to $\Fk^\lam$, as defined in \eqref{eq: adapted metric}. Under these assumptions families $\{ \Pi_\lam : \lam \in U \}$ and $\{\rho_\lam : \lam \in U\}$ satisfy assumptions \ref{it: metric compability} and \ref{it: Lip}. Property \ref{it: metric compability} follows by the bounded distortion property and the distortion continuity, see for example Nakajima \cite[Section~3]{Nak} in the more general non-autonomous case, while \ref{it: Lip} follows directly from the definition of metric $\rho_\lam$. 

It is straightforward to check that the transversality condition \ref{it: trans} with respect to the family of metrics $\rho_\lam$ follows from a stronger (and classical) condition: there exists $K$ such that
\begin{equation}\label{eq: classical trans} \eta\left( \left\{ \lam \in U : \|\Pi_\lam(\om) - \Pi_{\lam}(\tau)\| \leq r \right\} \right) \leq K\min\{1,r^d\} \text{ for every } \om,\tau \in \Sigma \text{ with } \om_1 \neq \tau_1.
\end{equation}
Indeed, by the bounded distortion property, there exists a continuous function $K\colon U\to (0, \infty)$ such that for every $x,y\in V$ and $\omega\in\Sigma_*$
 	$$
 	\|f_{\omega}^\lam(x)-f_{\omega}^\lam(y)\|\geq K(\lam)^{-1}\|(f_\omega^\lam)'\|\|x-y\|,
 	$$
 	see Nakajima \cite[Lemma~4.1]{Nak}.
 	And so,
 \[
 \begin{split}
 	\eta\left( \left\{ \lam \in U : \|\Pi_\lam(\om) - \Pi_{\lam}(\tau)\| \leq \rho_\lambda(\omega,\tau)r \right\} \right)&\leq\eta\left( \left\{ \lam \in U : \|\Pi_\lam(\sigma^{|\omega\wedge\tau|}\om) - \Pi_{\lam}(\sigma^{|\omega\wedge\tau|}\tau)\| \leq  K(\lambda)r \right\} \right)\\
 	&\leq\min\left\{1,r^d\sup_{\lambda\in U}K(\lambda)^d\right\}.
 \end{split}
 \]
\end{example}

It is usually a non-trivial task to check whether a given parametrized IFS satisfies the transversality condition \ref{it: trans}. We refer to \cite{BSS} for an overview of the technique and to \cite{SolomyakTransversSurvey} for a recent survey. Let us give a one simple construction which leads to a transversal family of conformal IFS.

\begin{example}[Translation family]\label{ex:translate}
Let  $\Fk = (f_i)_{i\in \Ak}$ be a conformal $C^{1+\theta}$ IFS on a compact connected set $V \subset \R^d$ as described in Example \ref{ex: conformal IFS} and assume that $\max \limits_{i \neq j \in \Ak} \|f_i'\| + \|f_j'\|<1$. Let $U=\{(t_i)_{ i \in \Ak}\in(\R^{d})^{\Ak}: f_i(V)+t_i\subset V\}$, and let $\eta$ be the normalized Lebesgue measure on $U$. Then $\Fk^{t}=\{f_i^t = f_i + t_i\}_{i\in\Ak}, t \in U$ is a continuous family of $C^{1+\theta}$ conformal IFS. A family of natural projections  $\{ \Pi_t : t \in U \}$ and corresponding metrics $\{\rho_t : t \in U\}$ as in Example \ref{ex:parametrized IFS}  satisfy \ref{it: metric compability} - \ref{it: trans} with measure $\eta$, with respect to the parameter $t$. See for example \cite[Theorem~14.5.2]{BSS} for the proof in the case $d=1$, which extends in a straightforward manner to higher dimensions.

Note also that if $d=1$ and $\Fk_t = \{ x \mapsto \lam_i x + t_i\}_{ i \in \Ak}$ consists of similarities with fixed $\lam_i \in (-1,1)\setminus\{0\}$ and parametrized by $t = (t_i)_{i \in \Ak} \in \R^{\Ak}$, then the similarity dimension $s_0 = s(\Fk_t)$ is independent of $t$ and in fact is the unique solution of the equation $\sum \limits_{i \in \Ak}|\lam_i|^{s_0} = 1$. Therefore, if $s_0 < 1$, then the condition $\max \limits_{i \neq j \in \Ak} \|f_i'\| + \|f_j'\|<1$ is satisfied and hence the family $\Fk_t$ satisfies the transversality condition. Note that for $d>1$, the analogous condition $s_0 < d$ is not sufficient for guaranteeing $\max \limits_{i \neq j \in \Ak} \|f_i'\| + \|f_j'\|<1$.
\end{example}

Before formulating our main result on parametrized families of IFS satisfying the transversality condition, let us interpret the nearly bi-Lipschitz condition for natural projection maps as a separation condition for cylinders. Given an infinite word $\om = (\om_1, \om_2, \ldots) \in \Sigma$, let $\om|_n = (\om_1, \ldots, \om_n)$ denote its restriction to the first $n$ coordinates, while given a finite word $\om \in \Sigma^*$, let $|\om|$ denote its length and let $[\om] = \{ \tau \in \Sigma : \tau|_n  = \om \}$ be the corresponding cylinder.

\begin{defn}\label{defn: ede} Let $\Fk$ be a $C^{1+\theta}$ conformal IFS on $\R^d$. We say that $\om \in \Sigma$ has \textbf{exponential distance from the enemy (EDE)} if for every $\varepsilon>0$ there exists $C=C(\om,\varepsilon)>0$ such that for every $n\in\N$
\begin{equation}\label{eq:ede}
\mathrm{dist}\left(\Pi_\Fk(\om),\bigcup_{\substack{|\tau|=n\\ \tau\neq \om|_n}}\Pi_\Fk([\tau])\right)>C\diam(\Pi_\Fk([\om|_n]))^{1+\varepsilon},
\end{equation}
\end{defn}

\begin{prop}\label{prop: ede holder} 	Let $\Fk$ be a $C^{1+\theta}$ conformal IFS on $\R^d$. Then $\om \in \Sigma$ has exponential distance from the enemy if and only if for every $\alpha \in (0,1)$ there exists $C = C(\om, \alpha)$ such that
\be\label{eq:holder}\rho_{\Fk} (\om,\tau) \leq C |\Pi_\Fk(\om) - \Pi_\Fk(\tau)|^\alpha \text{ for every } \tau \in \Sigma.
\ee
Consequently, for $\mu \in \Mk_{\fin}(\Sigma)$, we have that $\mu$-a.e. $\om \in \Sigma$ has exponential distance from the enemy if and only if the natural projection map $\Pi_\Fk$ is $\mu$-nearly bi-Lipschitz in metric $\rho_\Fk$.
\end{prop}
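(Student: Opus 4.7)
The plan is a direct pointwise conversion between the two formulations using standard bounded distortion estimates for $C^{1+\theta}$ conformal IFS. First I would record two standard consequences of bounded distortion (assuming $\diam(\Lambda)>0$, else the statement is vacuous): (a) there is $K=K(\Fk)\geq 1$ with $K^{-1}\|f'_\sigma\|\diam(\Lambda)\leq \diam(\Pi_\Fk([\sigma]))\leq K\|f'_\sigma\|\diam(\Lambda)$ for every finite word $\sigma$, and (b) the ratio $\|f'_{\sigma}\|/\|f'_{\sigma'}\|$ is pinched between two positive constants whenever $\sigma$ is obtained from $\sigma'$ by appending or deleting a single letter, because each $\|f'_i(x)\|$ is bounded between two positive constants on the compact set $V$. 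Since $\rho_\Fk(\om,\tau)=\|f'_{\om\wedge\tau}\|$ by definition, combining (a) and (b) yields the comparabilities $\diam(\Pi_\Fk([\om|_n]))\asymp \|f'_{\om|_n}\|\asymp \|f'_{\om|_{n-1}}\|$ that will be used throughout.

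For the forward direction, assume $\om$ has EDE and fix $\alpha\in(0,1)$. Set $\varepsilon=(1-\alpha)/\alpha>0$, so that $\alpha(1+\varepsilon)=1$. Given $\tau\neq\om$, put $n=|\om\wedge\tau|+1$; then $\tau|_n\neq\om|_n$, so $\tau$ lies in the union appearing in \eqref{eq:ede}. Applying EDE at level $n$ with parameter $\varepsilon$ and then invoking (a) and (b) gives
\[
|\Pi_\Fk(\om)-\Pi_\Fk(\tau)|\;>\;C(\om,\varepsilon)\diam(\Pi_\Fk([\om|_n]))^{1+\varepsilon}\;\geq\;C_1\|f'_{\om|_n}\|^{1+\varepsilon}\;\geq\;C_2\,\rho_\Fk(\om,\tau)^{1+\varepsilon}.
\]
Raising to the power $\alpha$ and using $\alpha(1+\varepsilon)=1$ produces exactly \eqref{eq:holder} with $C=C_2^{-\alpha}$.

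Conversely, suppose \eqref{eq:holder} holds for every $\alpha\in(0,1)$. Fix $\varepsilon>0$, set $\alpha=1/(1+\varepsilon)\in(0,1)$, and let $C(\om,\alpha)$ be the associated constant. Fix $n\in\N$. The union $\bigcup_{|\sigma|=n,\,\sigma\neq\om|_n}\Pi_\Fk([\sigma])$ is compact as a finite union of continuous images of symbolic cylinders, so the defining infimum is attained at some $\Pi_\Fk(\tau)$ with $|\om\wedge\tau|<n$. Step (b) then yields $\rho_\Fk(\om,\tau)=\|f'_{\om\wedge\tau}\|\geq c\|f'_{\om|_n}\|$, and combining with \eqref{eq:holder} gives
\[
c\|f'_{\om|_n}\|\;\leq\;\rho_\Fk(\om,\tau)\;\leq\;C(\om,\alpha)\,|\Pi_\Fk(\om)-\Pi_\Fk(\tau)|^{\alpha},
\]
so that $|\Pi_\Fk(\om)-\Pi_\Fk(\tau)|\geq C'\|f'_{\om|_n}\|^{1/\alpha}=C'\|f'_{\om|_n}\|^{1+\varepsilon}\asymp C''\diam(\Pi_\Fk([\om|_n]))^{1+\varepsilon}$, which is \eqref{eq:ede}.

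The ``consequently'' statement is then immediate: by definition, $\Pi_\Fk$ is $\mu$-nearly bi-Lipschitz in $\rho_\Fk$ exactly when \eqref{eq:holder} holds at $\mu$-a.e.\ $\om$ for every $\alpha\in(0,1)$, which by the pointwise equivalence just proved is the same as EDE holding $\mu$-a.e. There is no real obstacle; the proof is bookkeeping, with the only ``trick'' being the algebraic matching $\alpha(1+\varepsilon)=1$ between Hölder exponents and EDE parameters, together with the systematic use of bounded distortion to pass between $\rho_\Fk(\om,\tau)$, $\|f'_{\om|_n}\|$, and $\diam(\Pi_\Fk([\om|_n]))$.
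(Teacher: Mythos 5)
Your proof is correct and follows essentially the same route as the paper: the forward direction runs the chain EDE $\to \diam(\Pi_\Fk([\om|_n])) \to \|f'_{\om|_n}\| \to \rho_\Fk(\om,\tau)$ with the exponent matching $\alpha(1+\eps)=1$, and the reverse direction inverts \eqref{eq:holder} and passes back to diameters via bounded distortion. Two small remarks. First, in the reverse direction the compactness/attainment argument is an unneeded detour: for \emph{any} $\tau$ with $\tau|_n\neq\om|_n$ the bound $|\Pi_\Fk(\om)-\Pi_\Fk(\tau)|\gtrsim\diam(\Pi_\Fk([\om|_n]))^{1+\eps}$ follows directly, and then one takes the infimum. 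Second, in that step you invoke (b) to justify $\rho_\Fk(\om,\tau)=\|f'_{\om\wedge\tau}\|\geq c\|f'_{\om|_n}\|$, but (b) is a one-letter comparison, whereas $\om\wedge\tau$ can be much shorter than $\om|_n$; the inequality nonetheless holds with $c=1$, simply because $\om\wedge\tau$ is a prefix of $\om|_n$ and derivative norms decrease monotonically under appending letters (the maps are contractions). The conclusion is unaffected.
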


Therefore, for a measure $\mu \in \Mk_\fin(\Sigma)$, $\mu$-a.e. $\om \in \Sigma$ satisfies EDE if and only if $\Pi_\Fk$ is $\mu$-nearly bi-Lipschitz in metric $\rho_\Fk$. For the proof of Proposition \ref{prop: ede holder} see Section \ref{subsec: EDE}.

\begin{rem}\label{rem: ede useful}
Let us emphasize that EDE is a much weaker separation condition that the Open Set Condition (see \cite[Definition 1.5.2]{BSS}), so that it can hold for typical IFS with overlaps, while it is strong enough so that it can be used to deduce novel results on the multifractal analysis, see Section \ref{sec: multifractal}. In particular, see Lemma \ref{lem: nearly biLip loc dim} for a useful consequence of EDE, which is the main ingredient of the proof of Theorem \ref{thm:multi intro}.
\end{rem}

The following is our main result on transversal families of conformal IFS.

\begin{thm}\label{thm: main trans IFS}
Let $\Fk^\lam, \lam \in U$ be a continuous family of $C^{1+\theta}$ conformal IFS on $V \subset \R^n$. Assume that $\eta$ is a measure on $U$ such that the transversality condition \eqref{eq: classical trans} holds. Then for $\eta$-a.e. $\lam \in U$
\begin{enumerate}[(1)]
\item\label{it: main trans IFS dim} $\hdim \Pi_\lam \mu =  \min\left\{n, \frac{h(\mu)}{\chi(\mu, \Fk^\lam)} \right\}$ holds simultaneously for all ergodic measures $\mu$ on $\Sigma$,
\item\label{it: trasnsversal IFS orth proj} for every $1 \leq d < n$, for almost every $V \in \Gr(d, n)$, the equality $\hdim ( P_V \Pi_\lam \mu ) = \min\{d, \frac{h(\mu)}{\chi(\mu, \Fk^\lam)} \}$ holds simultaneously for all ergodic measures $\mu$ on $\Sigma$,
\item\label{it: main trans IFS EDE} if $s(\Fk^\lam) < n$, then simultaneously for all ergodic measures $\mu$ on $\Sigma$, $\mu$-a.e. $\om \in \Sigma$ has exponential distance from the enemy.
\end{enumerate}
\end{thm}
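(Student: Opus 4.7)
My plan is to derive all three parts of Theorem \ref{thm: main trans IFS} from Theorem \ref{thm: general proj}, working in the setup of Example \ref{ex:parametrized IFS}: phase space $X = \Sigma$, parameter space $U$ with measure $\eta$, adapted metrics $\rho_\lam = \rho_{\Fk^\lam}$, natural projections $\Pi_\lam$, and collection $\Mk = \Ek_\sigma(\Sigma)$. Assumptions \ref{it: metric compability} and \ref{it: Lip} are verified in Example \ref{ex:parametrized IFS}, and \ref{it: trans} is the transversality hypothesis of the theorem (which can be deduced from the more concrete form \eqref{eq: classical trans}). Proposition \ref{prop: rel dim sep IFS} supplies relative dimension separability of $\Ek_\sigma(\Sigma)$ with respect to each $\rho_\lam$, and every ergodic shift-invariant measure is exact-dimensional in $\rho_\lam$ by Example \ref{ex: conformal IFS}, hence weakly diametrically regular.

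With this setup, part (1) follows directly from Theorem \ref{thm: general proj}\ref{it: theorem general hdim} applied with target dimension $n$, after rewriting $\hdim(\mu, \rho_\lam) = h(\mu)/\chi(\mu, \Fk^\lam)$. Part (3) follows analogously from Theorem \ref{thm: general proj}\ref{it: theorem general adim}: since $\adim(\Sigma, \rho_\lam) = s(\Fk^\lam)$ by Example \ref{ex: conformal IFS}, the hypothesis $s(\Fk^\lam) < n$ is precisely the Assouad condition required there, and Proposition \ref{prop: ede holder} converts the resulting $\mu$-nearly bi-Lipschitz property of $\Pi_\lam$ into the exponential distance from the enemy condition at $\mu$-a.e.\ $\om$, simultaneously for all ergodic $\mu$.

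For part (2), my plan is to apply Theorem \ref{thm: general proj} a second time, now to the joint parametrization $\wtil U := U \times \Gr(d,n)$ equipped with the product measure $\wtil\eta := \eta \times \gamma$, metrics $\rho_{(\lam,V)} := \rho_\lam$ (independent of $V$), projections $(\lam,V) \mapsto P_V \circ \Pi_\lam : \Sigma \to \R^d$, and target dimension $d$. Conditions \ref{it: metric compability} and \ref{it: Lip} transfer immediately since the metric depends only on $\lam$ and $P_V$ is $1$-Lipschitz on $\R^n$, while the relative dimension separability and weak diametric regularity of $\Ek_\sigma(\Sigma)$ carry over verbatim. The main obstacle will be verifying the joint transversality \ref{it: trans} with target $d$. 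The plan is to use Fubini, integrating first in $V$ via the orthogonal projection transversality \eqref{eq:orthotrans} to obtain
\[
\gamma\bigl(\{V : \|P_V(\Pi_\lam\om - \Pi_\lam\tau)\| \leq \rho_\lam(\om,\tau) r\}\bigr) \leq c \min\bigl\{1, (\rho_\lam(\om,\tau) r/\|\Pi_\lam\om - \Pi_\lam\tau\|)^d\bigr\},
\]
and then invoking a layer-cake decomposition $\min\{1, x^d\} = d\int_0^1 s^{d-1}\mathbf{1}_{s \leq x}\,ds$ together with the $\Pi_\lam$-transversality at the rescaled parameter $r/s$, so that the $\lam$-integral reduces to a convergent one-dimensional integral in $s \in (0,1)$ producing a bound of the required form $K\delta^{-\eps}r^{d-\eps}$; the delicate point will be matching the loss parameters in $\delta$ and $r$ arising from the $s$-integration so as to satisfy \ref{it: trans}. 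Once the joint transversality is established, Theorem \ref{thm: general proj}\ref{it: theorem general hdim} applied to the joint setup yields $\wtil\eta$-almost everywhere the equality $\hdim(P_V \Pi_\lam \mu) = \min\{d, h(\mu)/\chi(\mu, \Fk^\lam)\}$ simultaneously for all ergodic $\mu$, and a final application of Fubini recasts this as the statement of (2).
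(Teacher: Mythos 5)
Your proposal follows essentially the same route as the paper: parts (1) and (3) are read off from Theorem \ref{thm: general proj} (via Proposition \ref{prop: rel dim sep IFS}, exact-dimensionality of ergodic measures for weak diametric regularity, $\adim(\Sigma,\rho_\lam)=s(\Fk^\lam)$ for the Assouad hypothesis, and Proposition \ref{prop: ede holder} to translate nearly bi-Lipschitz into EDE), and part (2) is handled by verifying joint transversality for $(V,\lam)\mapsto P_V\circ\Pi_\lam$ and invoking Fubini, exactly as the paper does. The only cosmetic difference is the form of the layer-cake decomposition: the paper writes $\int f\,d\eta=\int_0^\infty\eta(\{f>a\})\,da$ and bounds the resulting $\lam$-integral via \eqref{eq: classical trans} directly (yielding a clean $cr^d$ with no $\delta$ or $\eps$ loss and hence no parameter-matching issue), whereas your $d\int_0^1 s^{d-1}\mathds{1}_{\{s\le x\}}\,ds$ formulation, combined with only the weaker \ref{it: trans}, would indeed require the small-$\eps$ bookkeeping you flag — but that optimization does go through, so your concern is benign rather than a gap.
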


Note that point \ref{it: trasnsversal IFS orth proj} of Theorem \ref{thm: main trans IFS} asserts that under the transversality condition, despite possible overlaps, one obtains for typical $\Fk_\lam$ the same conclusion as in Theorem \ref{thm: main IFS} under Strong Separation Condition. Point \ref{it: main trans IFS dim} follows directly from Theorem \ref{thm: general proj} and Proposition \ref{prop: rel dim sep IFS}, while point \ref{it: main trans IFS EDE} requires additionally Proposition \ref{prop: ede holder}. 

Point \ref{it: trasnsversal IFS orth proj} requires an additional step to show that the map $P_V\circ\Pi_\lambda\colon\Sigma\mapsto\R^d$ satisfies the transversality condition with respect to the parametrisation $(V,\lambda)\in\Gr(d,n)\times U$. Then the claim of point~\ref{it: trasnsversal IFS orth proj} follows simply by Fubini's Theorem. Let us denote by $\gamma$ the unique measure defined in Example~\ref{ex: orthogonal proj}  and note that \eqref{eq: classical trans} gives that $\eta\left( \left\{ \lam \in U : \Pi_\lam(\om) = \Pi_\lam(\tau) \right\} \right) = 0$ whenever $\om \neq \tau$. Therefore for every $r>0$ by \eqref{eq:orthotrans} and \eqref{eq: classical trans}
\[
\begin{split}
&\gamma\times\eta\left(\left\{(V,\lambda):\ \|P_V\Pi_\lambda(\omega)-P_V\Pi_\lambda(\tau)\|< \rho_\lambda(\omega,\tau)r\right\}\right)\\
&\quad=\int\gamma\left(\left\{V:\|P_V\Pi_\lambda(\omega)-P_V\Pi_\lambda(\tau)\| < \|\Pi_\lambda(\omega)-\Pi_\lambda(\tau)\|\frac{\rho_\lambda(\omega,\tau)r}{\|\Pi_\lambda(\omega)-\Pi_\lambda(\tau)\|}\right\}\right)d\eta(\lambda)\\
&\quad\leq c\int\left(\frac{\rho_\lambda(\omega,\tau)r}{\|\Pi_\lambda(\omega)-\Pi_\lambda(\tau)\|}\right)^dd\eta(\lambda)= cr^d\int_0^\infty\eta\left(\left\{\lambda:\left(\frac{\rho_\lambda(\omega,\tau)r}{\|\Pi_\lambda(\omega)-\Pi_\lambda(\tau)\|}\right)^d>a\right\}\right)da\\
&\quad\leq c'r^d\int_0^\infty\min\left\{1,a^{-n/d}\right\}da\leq c''r^d.
\end{split}
\]

\subsection{Further examples}\label{sec: further examples}

Let us now present some more examples related to iterated function systems to which Theorem \ref{thm: general proj} can be applied.

\begin{example}[Parametrized non-autonomous system]
	Non-autonomous systems were introduced Rempe-Gillen and Urba\'nski
	in \cite{RU}, with the so-called Open Set Condition being assumed. For the
	sake of studying the overlapping non-autonumous systems, Nakajima \cite{Nak} considered families of non-autonomous systems and introduced a transversality condition for the non-autonomous systems. In particular the following parametrized family of non-autonomous system (with overlaps)
	was studied by Nakajima \cite{Nak}:
	The parameter domain is $U=\{t\in\mathbb{C}:|t|<2\times 5^{-5/8}\}\setminus\R$,
	and let
	$X:=\left\{z\in\mathbb{Z}:|z|\leq \frac{1}{1-2\cdot 5^{-5/8}}
	\right\}$.
	For every parameter $t\in U$ and $j\in\mathbb{N}$ Nakajima considered the mappings
	\begin{equation}
		\label{x98}
		\phi _{0,t}^{(j) },\phi _{1,t}^{(j) }:X\to X,\quad
		\quad
		\phi _{0,t}^{(j) } (z):=tz\quad \text{ and }\quad
		\phi _{1,t}^{(j) } (z):=tz+\frac{1}{j}.
	\end{equation}
	Let $\Phi  _{t}:=\left(\Phi  _{t}^{(j) }\right)_{j=1}^{\infty   }$, where
	\begin{equation}
		\label{x97}
		\Phi  _{t}^{(j) } =
		\left\{z\mapsto tz,z\mapsto tz+\frac{1}{j}\right\}
	\end{equation}
	Then $\Phi  _{t}$ is a parametrized non-autonomous system whose attractor is
	the set of all limit points
	$$
	\Lambda_t = \left\{
	\lim\limits_{n\to\infty} \phi _{\om_1,t}^{(1) }\circ\cdots\circ\phi _{\om_n,t}^{(n) } (z):\ (\om_1,\om_2,\dots  )\in \left\{0,1\right\}^{\mathbb{N}}
	\right\},$$
	where $z\in X$.	Set
	$\Ak=\{0,1\},\ \Sigma = \Ak^\N$ and $\Pi_t : \Sigma \to \mathbb{C},\ \Pi_{t}(\om)=\sum_{k=1}^\infty\frac{\om_kt^k}{k}$ for
	$\om=(\om_1,\om_2,\dots  )\in\Sigma $, so that $\Lambda_t = \Pi_t(\Sigma)$. Consider a family of metric on $\Sigma$ defined as	$\rho_t(\om, \tau)=t^{|\om \wedge \tau|}$ for $\om, \tau \in \Sigma$.
	It is straightforward to that conditions \ref{it: metric compability} and \ref{it: Lip} hold. It  follows from \cite[Theorem~B]{Nak} that the transversality condition \ref{it: trans} holds on every compact subset of $U$ with $\eta$ being the Lebesgue measure on $\mathbb{C}$. Indeed, by \cite[Theorem~B]{Nak}, if $\om, \tau \in \Sigma$ are such that $|\om \wedge \tau| = n$, then for a compact set $G \subset U$
	\[ \eta\left( \left\{ t \in G : |\Pi_t(\sigma^n \om) - \Pi_t(\sigma^n \tau)| \leq r  \right\} \right) \leq C_n r^2 \]
	with $C_n$ satisfying $\lim \limits_{n \to \infty} \frac{\log C_n}{n} = 0$. Therefore for every $\eps > 0$ there exists $K$ so that $C_n \leq K 2^{n \eps}$ and hence setting $\gamma = 2\times 5^{-5/8}$ we have for $r > 0, \delta > 0$
	\[
	\begin{split} & \eta\left( \left\{ t \in G : |\Pi_t(\om) - \Pi_t(\tau)| \leq \rho_t(\om, \tau)r,\ \rho_t(\om, \tau) \geq \delta  \right\} \right) \\
	& \qquad = \eta\left( \left\{ t \in G : |\Pi_t(\sigma^n\om) - \Pi_t(\sigma^n\tau)| \leq r,\ |t|^{n} \geq \delta  \right\} \right) \\
	& \qquad \leq \eta\left( \left\{ t \in G : |\Pi_t(\sigma^n\om) - \Pi_t(\sigma^n\tau)| \leq r,\ \gamma^{n} \geq \delta  \right\} \right) \\
	& \qquad \leq \eta\left( \left\{ t \in G : |\Pi_t(\sigma^n\om) - \Pi_t(\sigma^n\tau)| \leq r \right\} \right) \mathds{1}_{\left\{n \leq \frac{\log \delta}{\log \gamma} \right\}} \\
	& \qquad \leq C_n r^2 \mathds{1}_{\left\{n \leq \frac{\log \delta}{\log \gamma} \right\}} \leq K 2^{n\eps} r^2 \mathds{1}_{\left\{n \leq \frac{\log \delta}{\log \gamma} \right\}} \leq K \delta^{\frac{\eps}{\log \gamma}} r^2.
	\end{split}
	\]
	Consequently  \ref{it: trans} holds in this case. Finally, the set of all ergodic shift-invariant measures on $\Sigma$ is relative dimensional separable with respect to each $\rho_t$ with $t \in U$ by Proposition \ref{prop: general ergodic rds}, hence all assumptions of Theorem \ref{thm: general proj} are met in this case.
\end{example}

\begin{example}[Random self-similar system]
		Let $\Ak$ be a finite set of indices, and for every $i\in\Ak$, let $\theta_i\in(0,1)$, $O_i\in O(\R,d)$ and $t_i\in\R^d$. Let $I\subset\R^d$ be a compact domain and let $\zeta$ be the normalized Lebesgue measure on $I$. Let $U=I^{\Sigma_*}$. Set $\Sigma = \Ak^\N$ and for $\lam \in U$ define $\Pi_\lam : \Sigma \to \R^d$ as $\Pi_\lam(\om)=\sum_{k=1}^{\infty}(t_{\om_k}+\lam_{\om|_{k}})\theta_{\om|_{k-1}}O_{\om_1\dots  \om_k}$, where $\theta_{\om|_k}=\theta_{\om_1}\cdots\theta_{\om_k}$ and $O_{\om_1\dots \om_k}:= O_{\om_1}\cdots O_{\om_k}$. With the choice of a single metric $\rho_\lam(\om,\tau) = \rho(\om, \tau)=\theta_{\om \wedge \tau}$ on $\Sigma$, it is easy to check that the assumptions \ref{it: metric compability} and \ref{it: Lip} hold. Choosing the probability measure $\eta=\zeta^{\Sigma_*}$, the transversality condition \ref{it: trans} holds by \cite[Lemma~5.1]{JPS07} The set of all ergodic shift-invariant measures on $\Sigma$ is relative dimensional separable with respect to $\rho$ by Proposition \ref{prop: general ergodic rds}.
\end{example}

\subsection{Applications to the multifractal analysis}\label{subsec: multifractal}

Finally, let us discuss in more details the application of Theorem~\ref{thm: main trans IFS} to the multifractal analysis of self-similar measures. Let $\Fk_t=\{f_i(x)=\lambda_i O_i x+t_i\}_{i\in\Ak}$ be an IFS consisting of similarities on $\R^d$, so that $\lam_i \in (0,1)$, $O_i$ is a $d \times d$ orthogonal matrix and $t_i \in \R^d$. Let $p = (p_i)_{i \in \Ak}$ be a probability vector. For every $q \in \R$ we define $T(q)$ to be the unique real number satisfying
\[ \sum_{i\in\Ak}p_i^q \lambda_i^{T(q)}=1. \] 

The general version of our main result on the multifractal spectra of self-similar measures is the following.

\begin{thm}\label{thm: multi high dim main}
Fix $d \geq 1$. Let $\Ak$ be a finite set and for each $ i \in \Ak$ fix $\lam_i \in (0,1)$ and a $d \times d$ orthogonal matrix $O_i$. For each $t = (t_i)_{i \in \Ak} \in (\R^d)^{\Ak}$ and a probability vector $p = (p_i)_{i \in \Ak}$, let $\nu_{t,p}$ be the self-similar measure corresponding to the IFS $\Fk_t=\{f_i(x)=\lambda_i O_i x+t_i\}_{i\in\Ak}$ and the probability vector $p$. The following holds for Lebesgue almost every $(t_i)_{i\in\Ak} \in (\R^d)^\Ak$. If the similarity dimension $s_0 = s(\Fk_t)$ of $\Fk_t$ satisfies
\begin{enumerate}[(i)]
	\item $s_0 < 1$ if $d=1$, or
	\item $s_0 < d$ and $\max \limits_{i \in \Ak} \lambda_i<1/2$ if $d \geq 2$,
\end{enumerate}
then equality
	\[\dim_H \left( \left\{x:d(\nu_{t,p}, x)=\alpha\right\} \right)=\inf_{q\in\R}(\alpha q+T(q))\]
holds for
\begin{enumerate}[(i)]
	\item every $\alpha\in\left[\min\limits_{i\in\Ak}\frac{\log p_i}{\log\lambda_i},\max\limits_{i\in\Ak}\frac{\log p_i}{\log\lambda_i}\right]$ if $d = 1$, or
	\item every $\alpha\in\left[\frac{\sum \limits_{i \in \Ak} \lambda_i^{s_0}\log p_i}{\sum \limits_{i \in \Ak} \lambda_i^{s_0}\log\lambda_i},\max\limits_{i\in\Ak}\frac{\log p_i}{\log\lambda_i}\right]$ if $d \geq 2$.
\end{enumerate}
and every probability vector $p$ such that $p_i \neq \lambda_i^{s_0}$ for some $i \in \Ak$.
\end{thm}

We shall make now several comments on the above result. 
\begin{rem}\label{rem: L^q}
Theorem \ref{thm: multi high dim main} with $d > 1$ does not cover full range $\left[\min\limits_{i\in\Ak}\frac{\log p_i}{\log\lambda_i},\max\limits_{i\in\Ak}\frac{\log p_i}{\log\lambda_i}\right]$. This is because our technique deals only with decreasing part of the spectrum $\left[\frac{\sum \limits_{i \in \Ak} \lambda_i^{s_0}\log p_i}{\sum \limits_{i \in \Ak} \lambda_i^{s_0}\log\lambda_i},\max\limits_{i\in\Ak}\frac{\log p_i}{\log\lambda_i}\right]$, while the increasing part $\left[\min\limits_{i\in\Ak}\frac{\log p_i}{\log\lambda_i},\frac{\sum \limits_{i \in \Ak} \lambda_i^{s_0}\log p_i}{\sum \limits_{i \in \Ak} \lambda_i^{s_0}\log\lambda_i}\right]$ follows from the result of Barral and Feng \cite[Theorem~1.2, Remark~7.3]{BarralFeng}. As the latter was so far obtained only for $d = 1$, the full range result is limited to this case. Nevertheless, it follows from the proof of Theorem \ref{thm: multi high dim main} that in the remaining part of the spectrum we obtain a lower bound on the dimension of the level set, i.e.
\[\dim_H \left( \left\{x:d(\nu_{t,p}, x)=\alpha\right\} \right) \geq \inf_{q\in\R}(\alpha q+T(q))\]
holds for $\alpha \in \left[\min\limits_{i\in\Ak}\frac{\log p_i}{\log\lambda_i},\frac{\sum \limits_{i \in \Ak} \lambda_i^{s_0}\log p_i}{\sum \limits_{i \in \Ak} \lambda_i^{s_0}\log\lambda_i}\right]$.
\end{rem}

\begin{rem}
We shall emphasize that Theorem \ref{thm: multi high dim main}, even for $d=1$, does not establish the full \textit{multifractal formalism} for the corresponding self-similar measures. One says that the multifractal formalism holds at $\alpha$ if \[\dim_H \left( \left\{x:d(\nu_{t,p}, x)=\alpha\right\} \right)=\inf_{q\in\R}(\alpha q-\tau_{\nu_{t,p}}(q)),\]
where $q \mapsto \tau_{\nu_{t,p}}(q)$ is the $L^q$-spectrum of the self-similar measure $\nu_{t,p}$, see e.g. \cite[Chapter 5]{BSS}. Our result shows that the dimension of the level sets of the local dimension is preserved under the projection from the symbolic space onto the self-similar set. Establishing the multifractal formalism in the setting of Theorem \ref{thm: multi high dim main}, requires proving additionally that $\tau_{\nu_{t,p}}(q) = -T(q)$ for $q \in \R$. For $q>0$ this was proved by Shmerkin for self-similar measures satisfying the exponential separation condition \cite{Shmerkinlq}. The case $q < 0$ remains an open problem. The results of Barral and Feng \cite{BarralFeng} establish the full multifractal formalism for $d=1$ in the restricted range $\alpha \in \left[\min\limits_{i\in\Ak}\frac{\log p_i}{\log\lambda_i},\frac{\sum \limits_{i \in \Ak} {\b \lam_i^{s_0}}\log p_i}{\sum \limits_{i \in \Ak} {\b \lam_i^{s_0}}\log\lambda_i}\right]$ (corresponding to $q>0$), for self-similar measures on $\R$ satisfying the exponential separation condition.
\end{rem}

\section{Preliminaries}\label{sec: prelim}

For a map $\Pi$ between metric spaces, we will denote by $\Lip(\Pi)$ the Lipschitz constant of $\Pi$ (or $\Lip(\Pi, \rho)$ if we want to emphasize dependence on the metric). Given a Borel measure $\mu$ on a metric space $(X,\rho)$, we denote by $\supp(\mu)$ the topological support of $\mu$, i.e. the set of all $x \in X$ such that $\mu(B(x,r)) > 0$ for every $r>0$.

\begin{defn}\label{defn:dim}
	Let $(X, \rho)$ be a metric space. The \textbf{lower} and \textbf{upper local dimensions} of a finite Borel measure $\mu \in \Mk_\fin(X)$ at a point $x \in \supp ( \mu )$ are defined as
	\[ \ld(\mu, x) = \liminf \limits_{r \to 0} \frac{\log \mu(B(x,r))}{\log r} \text{ and } \ud(\mu, x) = \limsup \limits_{r \to 0} \frac{\log \mu(B(x,r))}{\log r}. \]
	If the limit above exists, then their common value is called the \textbf{local dimension} of $\mu$ at $x$ and denoted as $d(\mu, x)$. The \textbf{lower} and \textbf{upper Hausdorff dimensions} of $\mu$ are defined as
	\[ \lhdim \mu = \underset{x \sim \mu}{\essinf}\ \ld(\mu, x) \text{ and } \uhdim \mu = \underset{x \sim \mu}{\esssup}\ \ld(\mu,x). \]
	If $\lhdim \mu = \uhdim \mu$, then their common value is called the \textbf{Hausdorff dimension} of $\mu$ and denoted $\hdim \mu$. Measure $\mu$ is called \textbf{exact-dimensional} if $d(\mu,x)$ exists  and is constant $\mu$-almost everywhere.
\end{defn}

It is well known (see e.g. \cite[Section 1.9.1]{BSS}) that the Hausdorff dimensions can be equivalently expressed as
\[ \lhdim\mu = \inf \left\{ \hdim A  : A \subset X \text{ Borel with } \mu(A) > 0 \right\}\]
and
\[ \uhdim\mu = \inf \left\{ \hdim A : A \subset X \text{ Borel with } \mu(\R^N \setminus A) = 0 \right\}. \]

\begin{defn}\label{defn:adim}
Let $(X,\rho)$ be a metric space. For a set $Y \subset \R^N$ and $\delta>0$, let $N(Y, \delta)$ denote the minimal number of balls of radius $\delta$ required to cover $X$. Set $Y$ is said to be $(M,s)$-\emph{homogeneous} if $N(Y \cap B(x, r), \rho) \leq M(r / \rho)^s$ for every $x \in Y$, $0 < \rho < r$, i.e.~the intersection $B(x,r) \cap Y$ can be covered by at most $M(r / \rho)^s$ balls of radius $\rho$. The \textbf{Assouad dimension} of $Y$ is defined as
	\[ \adim Y = \inf \{ s > 0 : Y \text{ is } (M, s)\text{-homogeneous for some } M > 0 \}. \]
\end{defn}

We will repeatedly make use of the following simple lemma.

\begin{lem}\label{lem: packing cover}
Let $X$ be a metric space. Fix $r>0$. For every set $G \subset X$ there exists a cover
\[ G \subset \bigcup \limits_{x' \in F} B(x', r) \]
such that
\[ F \subset G \text{ and } \left\{ B(x', r/2) : x' \in F \right\} \text{ consists of pairwise disjoint sets}. \]
Moreover, if $X$ is separable, then $F$ can be taken to be countable and if $X$ is compact, then $F$ can be taken finite.
\end{lem}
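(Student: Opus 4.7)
The plan is to take $F$ to be a maximal $r$-separated subset of $G$. Specifically, I would consider the collection
\[ \mathcal{P} = \{ F \subset G : \rho(x, y) \geq r \text{ for all distinct } x, y \in F \}, \]
partially ordered by inclusion. This collection contains the empty set and is closed under unions of chains, since the defining condition is finitary, so Zorn's lemma supplies a maximal element $F \in \mathcal{P}$.

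I would then verify the two required properties directly from the $r$-separation and the maximality of $F$. For disjointness of the balls $\{B(x', r/2) : x' \in F\}$, if $z \in B(x', r/2) \cap B(y', r/2)$ for some distinct $x', y' \in F$, the triangle inequality yields $\rho(x', y') < r$, contradicting the separation property of $F$. For the covering property, observe that for every $g \in G \setminus F$, maximality forces some $x' \in F$ with $\rho(g, x') < r$, since otherwise $F \cup \{g\}$ would belong to $\mathcal{P}$ and strictly contain $F$; hence $g \in B(x', r)$. Points of $F$ trivially lie in their own balls, so $G \subset \bigcup_{x' \in F} B(x', r)$.

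For the cardinality statements, suppose first that $X$ is separable, and fix a countable dense subset $D \subset X$. Each of the pairwise disjoint open balls $B(x', r/2)$, $x' \in F$, contains at least one point of $D$, and distinct balls select distinct points; this produces an injection $F \hookrightarrow D$, so $F$ is countable. If instead $X$ is compact and $F$ were infinite, then $F$ would admit a limit point $p \in X$, so $B(p, r/2)$ would contain two distinct points of $F$ at mutual distance strictly less than $r$, again contradicting the separation property; hence $F$ is finite.

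The argument is the standard greedy/Zorn construction of a maximal $r$-net and I do not anticipate any genuine obstacle; the only minor subtlety worth flagging is that in a general (non-geodesic) metric space the implication ``balls of radius $r/2$ disjoint $\Rightarrow$ centers at distance $\geq r$'' can fail, but we only need the converse implication, which holds by the triangle inequality without any geometric assumption on $X$.
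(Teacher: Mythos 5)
Your proof is correct and takes essentially the same route as the paper's: Zorn's lemma produces a maximal object in $G$, maximality yields the covering, and the disjointness/separation gives the cardinality bounds. The only cosmetic differences are that the paper maximizes over $r/2$-packings rather than $r$-separated sets, and it handles the separable case by invoking the hereditary Lindel\"of property to extract a countable subcover instead of your (slightly more direct) injection of $F$ into a countable dense set via the disjoint $r/2$-balls.
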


\begin{proof}
As $F$ one can take a maximal packing $r/2$-packing of $G$, i.e. a set $F \subset G$ with the property that balls $\{ B(x', r/2) \}_{x' \in F}$ are pairwise disjoint and no larger set (in the sense of inclusion) has this property (its existence follows from the Kuratowski-Zorn lemma). Then $G \subset \bigcup \limits_{x' \in F} B(x', r)$, as otherwise $F$ would not be a maximal $r/2$-packing. If $X$ is separable, then it is hereditary Lindel\"of \cite{EngelkingGT}, hence one choose a countable subcover of $G$. If $X$ is compact, then $F$ must be finite, as otherwise $X$ would have a sequence without a convergent subsequence.
\end{proof}

\section{Relative dimension separability}\label{sec: rel dim sep}

We begin with simple observations providing conditions which guarantee that the relative dimension separability is preserved under a Lipschitz map.

\begin{prop}\label{prop: rel dim sep under biLip}
Let $X$ and $Y$ be metric spaces and let $\Pi: X \to Y$ be a bi-Lipschitz map. Assume that $\Mk \subset \Mk_\fin(X)$ is relative dimension separable and that for each $\mu \in \Mk$, measure $\Pi \mu$ is weakly diametrically regular. Then the collection $\{ \Pi \mu : \mu \in \Mk \} \subset \Mk_{\fin}(Y)$ is relative dimension separable.
\end{prop}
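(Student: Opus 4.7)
My plan is to push forward the countable witness for relative dimension separability of $\Mk$. Let $\Vk \subset \Mk_\fin(X)$ be the countable family provided by the hypothesis, and set $\Vk' := \{\Pi\nu : \nu \in \Vk\} \subset \Mk_\fin(Y)$, which is again countable. Given $\mu \in \Mk$ and $\eps > 0$, I will pick $\nu \in \Vk$ with $\dim(\mu || \nu, \rho_X) < \eps/2$ and check that $\Pi\nu$ witnesses $\dim(\Pi\mu || \Pi\nu, \rho_Y) < \eps$.

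The basic tool will be the bi-Lipschitz sandwich. Writing $L \geq 1$ for a bi-Lipschitz constant of $\Pi$, the inclusions $B_X(x, r/L) \subset \Pi^{-1}(B_Y(\Pi x, r)) \subset B_X(x, Lr)$ yield
\[ \mu(B_X(x, r/L)) \leq \Pi\mu(B_Y(\Pi x, r)) \leq \mu(B_X(x, Lr)) \]
and the analogous inequalities for $\nu$. Chaining these produces
\[ \frac{\Pi\mu(B_Y(\Pi x, r))}{\Pi\nu(B_Y(\Pi x, r))} \leq \frac{\mu(B_X(x, Lr))}{\mu(B_X(x, r/L))} \cdot \frac{\mu(B_X(x, r/L))}{\nu(B_X(x, r/L))} \]
together with a symmetric lower bound. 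The rightmost factor is immediately controlled by $\dim(\mu || \nu, \rho_X) < \eps/2$: at $\mu$-a.e.~$x$ it lies between $r^{\eps/2}$ and $r^{-\eps/2}$ up to a multiplicative constant depending on $L$, for all sufficiently small $r$.

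The main obstacle will be the first factor $\mu(B_X(x, Lr))/\mu(B_X(x, r/L))$, which embodies the factor-$L^2$ mismatch in ball radii coming from bi-Lipschitz distortion and must be shown to grow subpolynomially. This is exactly what weak diametric regularity is designed to handle, but the hypothesis only supplies it for $\Pi\mu$, so I will first transfer it to $\mu$. The same sandwich bound gives
\[ \frac{\mu(B_X(x, r))}{\mu(B_X(x, 2r))} \geq \frac{\Pi\mu(B_Y(\Pi x, r/L))}{\Pi\mu(B_Y(\Pi x, 2Lr))}, \]
and iterating the doubling property of $\Pi\mu$ a bounded number $\lceil \log_2(2L^2) \rceil$ of times will show the right-hand side equals $r^{o(1)}$, so $\mu$ itself is weakly diametrically regular. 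One further iteration across the scale factor $L^2$ yields $\mu(B_X(x, Lr))/\mu(B_X(x, r/L)) = r^{o(1)}$ at $\mu$-a.e.~$x$.

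Combining the two factors at $\mu$-a.e.~$x$ (equivalently $\Pi\mu$-a.e.~$y = \Pi x$, since this is the defining property of the push-forward) gives
\[ r^{\eps/2 + o(1)} \leq \frac{\Pi\mu(B_Y(\Pi x, r))}{\Pi\nu(B_Y(\Pi x, r))} \leq r^{-\eps/2 + o(1)}. \]
Taking logarithms and dividing by $\log r < 0$ flips these into the bounds $-\eps/2 \leq \liminf \leq \limsup \leq \eps/2$ for the local density quantity in the definition of relative dimension, for $\Pi\mu$-a.e.~$y$. Hence $\dim(\Pi\mu || \Pi\nu, \rho_Y) \leq \eps/2 < \eps$, as required.
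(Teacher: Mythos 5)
Your proposal is correct and follows essentially the same route as the paper's own proof: push the countable witness $\Vk$ forward through $\Pi$, use the bi-Lipschitz inclusion $B_X(x,r/L)\subset\Pi^{-1}(B_Y(\Pi x,r))\subset B_X(x,Lr)$ as the central sandwich, invoke the weak diametric regularity of $\Pi\mu$ to absorb the $L^2$ scale mismatch, and then appeal to the relative dimension bound on $X$. The only cosmetic difference is order of operations: you first transfer weak diametric regularity from $\Pi\mu$ to $\mu$ (an intermediate claim the paper does not state) and apply it on the $X$ side, whereas the paper keeps the doubling step on the $Y$ side, rescaling $\Pi\mu$ from radius $r$ to $r/L^2$ before pulling back — these produce the same chain of inequalities.
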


\begin{proof}
Let $\rho_X$ and $\rho_Y$ denote the metrics on $X$ and $Y$, respectively. Let $\Vk \subset \Mk_\fin(X)$ be a countable set witnessing relative dimension separability of $\Mk$. Fix $\eps>0,\ \mu \in \Mk$ and $\nu \in \Vk$ such that $\dim(\mu || \nu, \rho_X) < \eps$. As $\Pi$ is bi-Lipschitz, there exists $L>0$ such that for every $x \in X$ and $r>0$
\[ B(x, r/L) \subset \Pi^{-1}(B(\Pi(x),r)) \subset B(x, Lr). \]
Fix $x \in X$ such that there exist $R > 0$ and $d \geq 0$ for which
\[
	r^\eps \nu(B(x,r)) \leq \mu(B(x,r)) \leq r^{-\eps}\nu(B(x,r))
\]
and
\[
	\Pi\mu(B(\Pi(x), L^2 r)) \leq r^{-\eps} \Pi\mu(B(\Pi(x), r))
\]
hold for every $0 < r <R$. By assumptions, $\mu$-a.e. $x \in X$ satisfies the above properties. For $0 < r < R$ we have
\[\begin{split} \Pi\mu(B(\Pi(x), r)) & \leq  L^{2\eps} r^{\eps} \Pi\mu(B(\Pi(x), r/L^2)) \leq L^{2\eps} r^{-\eps} \mu(B(x,r/L)) \leq  L^{3\eps} r^{-2\eps} \nu(B(x,r/L)) \\
	& \leq L^{3\eps} r^{-2\eps} \nu(\Pi^{-1}(B(\Pi(x),r))) = L^{3\eps} r^{-2\eps} \Pi\nu(B(\Pi(x),r)).
\end{split}\]
Similarly, one has for $0<r < r/L^2$.
\[\begin{split} \Pi\mu(B(\Pi(x), r)) & \geq  r^{\eps}	\Pi\mu(B(\Pi(x), L^2 r)) \geq r^\eps \mu(B(x, Lr)) \geq L^\eps r^{2\eps} \nu(B(x, Lr)) \\
	& \geq L^\eps r^{2\eps} \nu(\Pi^{-1}(B(\Pi(x),r))) = L^\eps r^{2\eps} \Pi\nu(B(\Pi(x), r)).
\end{split}\]
The two above calculations show together that
\[ -2\eps \leq \liminf \limits_{r \to 0} \frac{\log \frac{\Pi\mu(B(\Pi(x),r))}{\Pi\nu(B(\Pi(x),r))}}{\log r}  \leq \limsup \limits_{r \to 0} \frac{\log \frac{\Pi\mu(B(\Pi(x),r))}{\Pi\nu(B(\Pi(x),r))}}{\log r} \leq 2\eps.\]
Recalling that the above holds for $\mu$-a.e. $x \in X$ we obtain
$\dim(\Pi\mu || \Pi\nu, \rho_Y) \leq 2\eps$. As the set $\{ \Pi\nu : \nu \in \Vk \}$ is at most countable, we see that $\{ \Pi\mu : \mu \in \Mk \}$ is relative dimension separable.
\end{proof}

For the next preservation property we need a stronger separability condition.

\begin{defn}
Let $\mu$ and $\nu$ be finite Borel measures on a metric space $(X,\rho)$. The \textbf{uniform relative dimension} of $\mu$ with respect to $\nu$ is
\[ \dim_u(\mu || \nu, d) := \inf \{ \eps > 0 : \underset{R > 0}{\exists}\ \underset{x \in X}{\forall}\ \underset{0 < r <  R}{\forall}\ r^\eps \nu(B(x,r)) \leq \mu(B(x,r)) \leq r^{-\eps}\nu(B(x,r))  \}. \]
\end{defn}

\begin{defn}\label{defn: urds}
Let $(X,\rho)$ be a metric space. Let $\Mk \subset \Mk_\fin(X)$ be a collection of finite Borel measures on $X$. We say that $\Mk$ is \textbf{uniform relative dimension separable} (with respect to $\rho$)  if there exists a countable set $\Vk \subset \Mk_\fin(X)$ such that for every $\mu \in \Mk$ and $\eps>0$ there exists $\nu \in \Vk$ such that $\dim_u(\mu || \nu, d) < \eps$.
\end{defn}

Note that clearly $\dim(\mu || \nu, d) \leq \dim_u(\mu || \nu, d)$, hence uniform relative dimension separability implies relative dimension separability. We shall also need a stronger diametric regularity condition.

\begin{defn}\label{defn: uniform weak diametric regularity}
Let $(X,\rho)$ be a metric space. Measure $\mu \in \Mk_\fin(X)$ is called \textbf{uniformly diametrically regular} if for every $\eps>0$ there exists $R>0$ such that
\[ \mu(B(x,2r)) \leq r^{-\eps}\mu(B(x,r)) \]
holds for every $0 < r < R$ and $x \in X$.
\end{defn}

\begin{prop}\label{prop: Lipschitz urds to rds}
Let $X$ and $Y$ be metric spaces and let $\Pi: X \to Y$ be a Lipschitz map. Assume that $X$ is separable, $\Mk \subset \Mk_\fin(X)$ is uniform relative dimension separable and that for every $\mu \in \Mk$, $\mu$ is uniformly diametrically regular and $\Pi\mu$ is weakly diametrically regular. Then the collection $\{ \Pi \mu : \mu \in \Mk \} \subset \Mk_{\fin}(Y)$ is relative dimension separable.
\end{prop}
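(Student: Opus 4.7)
The plan is to exhibit a countable collection of measures on $Y$ witnessing relative dimension separability of $\{\Pi\mu : \mu \in \Mk\}$. Let $\Vk \subset \Mk_\fin(X)$ be a countable set witnessing uniform relative dimension separability of $\Mk$; I claim that $\Wk = \{ \Pi\nu : \nu \in \Vk \}$ works. Given $\mu \in \Mk$ and $\eps > 0$, choose $\nu \in \Vk$ with $\dim_u(\mu || \nu, \rho_X) < \eps$ and aim to show $\dim(\Pi\mu || \Pi\nu, \rho_Y) = O(\eps)$. Since $\eps$ can be taken arbitrarily small and $\Wk$ is countable, this establishes the proposition.

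A preliminary observation is that $\nu$ inherits uniform diametric regularity from $\mu$. For any $\eps_1 > 0$ arising from uniform diametric regularity of $\mu$ and $r$ small, chaining
$\nu(B(x, 2r)) \leq (2r)^{-\eps}\mu(B(x, 2r))$,
$\mu(B(x, 2r)) \leq r^{-\eps_1}\mu(B(x, r))$, and
$\mu(B(x, r)) \leq r^{-\eps}\nu(B(x, r))$
yields $\nu(B(x, 2r)) \leq 2^{-\eps} r^{-2\eps - \eps_1} \nu(B(x, r))$ uniformly in $x$, so $\nu$ is uniformly diametrically regular with parameter $\approx 2\eps + \eps_1$.

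The main step is a pair of covering bounds relating $\Pi\mu$ and $\Pi\nu$. Let $L = \Lip(\Pi, \rho_X)$ and fix $y \in Y$ and small $0 < r < R$. Using Lemma~\ref{lem: packing cover} and separability of $X$, cover $\Pi^{-1}(B(y, r)) \cap \supp \mu$ by a countable family $\{B(x_i, r/L)\}$ with $x_i \in \Pi^{-1}(B(y, r)) \cap \supp\mu$ such that $\{B(x_i, r/(2L))\}$ are pairwise disjoint. By the Lipschitz property, each $B(x_i, r/(2L))$ lies in $\Pi^{-1}(B(y, 3r/2)) \subset \Pi^{-1}(B(y, 2r))$, so
\[ \Pi\mu(B(y, r)) \leq \sum_i \mu(B(x_i, r/L)) \leq (r/(2L))^{-\eps_1 - \eps} \sum_i \nu(B(x_i, r/(2L))) \leq C_1 r^{-\eps_1 - \eps} \Pi\nu(B(y, 2r)), \]
where the middle inequality uses uniform diametric regularity of $\mu$ (to pass from $r/L$ to $r/(2L)$) together with URDS, and the last uses disjointness. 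An analogous covering of $\Pi^{-1}(B(y,r)) \cap \supp\nu$, relying on the uniform diametric regularity of $\nu$ just established, yields symmetrically
\[ \Pi\nu(B(y, r)) \leq C_2 r^{-\eps_2 - \eps} \Pi\mu(B(y, 2r)), \]
with $\eps_2 \approx 2\eps + \eps_1$.

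The hard part is the factor-of-$2$ discrepancy in the radii between $B(y,r)$ and $B(y,2r)$, which cannot be controlled via any regularity assumption on $\Pi\nu$. The trick is to use weak diametric regularity of $\Pi\mu$: for $\Pi\mu$-a.e.\ $y$ and any $\delta > 0$, one has $\Pi\mu(B(y, 2r)) \leq r^{-\delta}\Pi\mu(B(y, r))$ for all sufficiently small $r$. Applying the first covering bound at scale $r/2$ and combining with this regularity applied to $\Pi\mu(B(y,r)) \leq (r/2)^{-\delta}\Pi\mu(B(y,r/2))$ produces $\Pi\mu(B(y, r)) \leq C_1' r^{-\eps_1 - \eps - \delta} \Pi\nu(B(y, r))$, which upon taking logarithms and dividing by $\log r < 0$ gives $\liminf_{r \to 0} \frac{\log(\Pi\mu(B(y,r))/\Pi\nu(B(y,r)))}{\log r} \geq -(\eps_1 + \eps + \delta)$. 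Combining the second covering bound directly with the same regularity similarly produces $\Pi\nu(B(y, r)) \leq C_2' r^{-\eps_2 - \eps - \delta} \Pi\mu(B(y, r))$, giving $\limsup_{r \to 0} \frac{\log(\Pi\mu(B(y,r))/\Pi\nu(B(y,r)))}{\log r} \leq \eps_2 + \eps + \delta$. Since both hold for $\Pi\mu$-a.e.\ $y$ and $\eps_1, \delta$ can be taken arbitrarily small, I conclude $\dim(\Pi\mu || \Pi\nu, \rho_Y) \leq 3\eps$, completing the plan.
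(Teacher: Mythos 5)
Your proof is correct and takes essentially the same approach as the paper: both packing-cover the preimage of a small ball, pass from $\mu$-balls to $\nu$-balls of half the radius using uniform relative dimension closeness together with uniform diametric regularity, and kill the resulting factor-of-two mismatch in the $Y$-radii via weak diametric regularity of $\Pi\mu$. The only cosmetic difference is that you derive uniform diametric regularity of $\nu$ as an auxiliary step and use two separate covers, whereas the paper uses a single cover of $\Pi^{-1}(B(\Pi(x),r/2))$ (whose $r/(2L)$-packing balls land in $\Pi^{-1}(B(\Pi(x),r))$) and reorders the chain so that only regularity of $\mu$ is needed.
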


\begin{proof} Let $\Vk \subset \Mk_\fin(X)$ be a countable set witnessing uniform relative dimension separability of $\Mk$. Fix $\eps>0,\ \mu \in \Mk$ and $\nu \in \Vk$ such that $\dim_u(\mu || \nu, \rho_X) < \eps$. Let $R_1>0$ be such that
\begin{equation}\label{eq: rel dim}
r^\eps \nu(B(x,r)) \leq \mu(B(x,r)) \leq r^{-\eps}\nu(B(x,r))
\end{equation}
and
\begin{equation}\label{eq: udr}
	\mu(B(x, 2r)) \leq r^{-\eps} \mu(B(x, r))
\end{equation}
hold for every $x \in X$ and $0 < r <R_1$. Fix $x\in X$ such that there exists $R_2 > 0$ so that
\begin{equation}\label{eq: wdr}
\Pi\mu(B(\Pi(x), 2r)) \leq r^{-\eps} \Pi\mu(B(\Pi(x), r))
\end{equation}
holds for every $0 < r < R_2$. By assumptions, $\mu$-a.e. $x \in X$ satisfies the above properties. Set $R= \min\{R_1, R_2 \}$ and $L =\Lip(\Pi)$. Using Lemma \ref{lem: packing cover}, take a countable cover
\[ \Pi^{-1}\left(B\left(\Pi\left(x\right), r/2\right)\right) \subset \bigcup \limits_{x' \in F} B\left(x', \frac{r}{L}\right)  \]
such that
\[ F \subset \Pi^{-1}\left(B\left(\Pi\left(x\right), r/2\right)\right)\text{ and } \left\{  B\left(x', \frac{r}{2L}\right) : x' \in F \right\} \text{ consists of pairwise disjoint sets}. \]
Note that
\[ \bigcup \limits_{x' \in F} B\left(x', \frac{r}{2L}\right) \subset \Pi^{-1}\left(B\left(\Pi\left(x\right), r\right)\right). \]
We therefore have by \eqref{eq: rel dim} and \eqref{eq: wdr} for $0 < r < \min\{ R, LR\}$
\[
\begin{split} \Pi\mu\left(B\left(\Pi\left(x\right),r\right)\right) & \leq 2^\eps r^{-\eps} \Pi\mu\left(B\left(\Pi\left(x\right), r/2\right)\right) \leq 2^\eps r^{-\eps} \sum \limits_{x' \in F} \mu\left(B\left(x', \frac{r}{L}\right)\right)  \\
	&  \leq 2^{2\eps} L^\eps  r^{-2\eps} \sum \limits_{x' \in F} \mu\left(B\left(x', \frac{r}{2L}\right)\right) \\
	& \leq 2^{3\eps} L^{2\eps}  r^{-3\eps} \sum \limits_{x' \in F} \nu\left(B\left(x', \frac{r}{2L}\right)\right) \\
	& \leq 2^{3\eps} L^{2\eps}  r^{-3\eps} \Pi\nu(B(\Pi(x), r)).
\end{split}\]

Similarly for $0 < r < \min\{ R, LR\}$
\[
\begin{split} \Pi\nu\left(B\left(\Pi\left(x\right),r/2\right)\right) &  \leq  \sum \limits_{x' \in F} \nu\left(B\left(x', \frac{r}{L}\right)\right) \leq  L^\eps   r^{-\eps}\sum \limits_{x' \in F} \mu\left(B\left(x', \frac{r}{L}\right)\right) \\
	&  \leq 2^{\eps} L^{2\eps} r^{-2\eps} \sum \limits_{x' \in F} \mu\left(B\left(x', \frac{r}{2L}\right)\right) \\
	& \leq 2^{\eps} L^{2\eps}  r^{-2\eps} \Pi\mu(B(\Pi(x), r))\\
	& \leq 2^{2\eps} L^{2\eps} r^{-3\eps} \Pi\mu(B(\Pi(x), r/2)).
\end{split}\]

The two above calculations show together that
\[ -3\eps \leq \liminf \limits_{r \to 0} \frac{\log \frac{\Pi\mu(B(\Pi(x),r))}{\Pi\nu(B(\Pi(x),r))}}{\log r}  \leq \limsup \limits_{r \to 0} \frac{\log \frac{\Pi\mu(B(\Pi(x),r))}{\Pi\nu(B(\Pi(x),r))}}{\log r} \leq 3\eps.\]
Recalling that the above holds for $\mu$-a.e. $x \in X$ we obtain
$\dim(\Pi\mu || \Pi\nu, \rho_Y) \leq 3\eps$. As the set $\{ \Pi\nu : \nu \in \Vk \}$ is at most countable, we see that $\{ \Pi\mu : \mu \in \Mk \}$ is relative dimension separable.
\end{proof}

\section{Proof of Theorem \ref{thm: general proj}}

This section is devoted to the proof of Theorem \ref{thm: general proj}. \textbf{We assume in this section that all assumptions of Theorem \ref{thm: general proj} are satisfied.} We will denote by $\Lip(\Pi_\lam, \rho_\lam)$ the Lipschitz constant of $\Pi_\lam : X \to \R^d$ with respect to metric $\rho_\lam$ on $X$.

\subsection{Preliminaries on relative dimension}

Let us begin with formulating the main technical consequence of the relative dimension separability assumption, which is a construction of sets on which one can compare $\mu$- and $\nu$-measures of balls for all $\mu$ which are relative dimension close to the reference measure $\nu$. For that, given $\mu, \nu \in \Mk_{\fin}(X),\ \lam_0 \in U, q \geq 0, \eps >0$ and $R > 0$ define sets
\[
\begin{split} A_{\lam_0, q, \eps, R}(\mu, \nu) = \bigg\{ x \in X : \underset{0 < r < R}{\forall}\ &r^\eps \nu(B_{\lam_0}(x,r)) \leq \mu(B_{\lam_0}(x,r)) \leq r^{-\eps} \nu(B_{\lam_0}(x,r)),\\
& \mu(B_{\lam_0}(x,r)) \leq r^{q - \eps},\ \mu(B_{\lam_0}(x,2r))  \leq r^{-\eps} \mu (B_{\lam_0}(x,r)) \bigg\}
\end{split}
\]
and
\[
G_{\lam_0, q, \eps, R}(\nu) = \bigg\{ x \in X : \underset{0 < r < R/2}{\forall}\ \nu(B_{\lam_0}(x,r)) \leq r^{q-2\eps},\ \nu(B_{\lam_0}(x,2r)) \leq 2^{-\eps}r^{-3\eps}\nu(B_{\lam_0}(x,r)) \bigg\}.
\]

A formal corollary of definitions of $ A_{\lam_0, q, \eps, R}(\mu, \nu)$ and $G_{\lam_0, q, \eps, R}(\nu)$ is the following.

\begin{lem}\label{lem: A G bounds}
For every $\mu, \nu \in \Mk_{\fin}(X),\ \lam_0 \in U, q \geq 0, \eps >0$ and $R>0$
\begin{equation}\label{eq: A subset G}
A_{\lam_0, q, \eps, R}(\mu, \nu) \subset G_{\lam_0, q, \eps, R}(\nu)
\end{equation}
and inequality
\begin{equation}\label{eq: mu nu comparison}
\mu (B_{\lam_0}(x,r) \cap A_{\lam_0, q, \eps, R}(\mu, \nu)) \leq 2^{-\eps}r^{-5\eps} \nu(B_{\lam_0}(x,r))
\end{equation}
holds for every $x \in G_{\lam_0, q, \eps, R}(\nu)$ and $0 < r < R/2$. Moreover, there exists $M=M(R, \eps)$ such that
\begin{equation}\label{eq: full set measure bound}
\mu( A_{\lam_0, q, \eps, R}(\mu, \nu)) \leq M \nu(X).
\end{equation}
\end{lem}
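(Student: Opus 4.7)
The three claims follow by unpacking the definitions, with only \eqref{eq: mu nu comparison} requiring a genuine argument. For the inclusion $A \subset G$, I would fix $x \in A_{\lam_0, q, \eps, R}(\mu, \nu)$ and verify the two defining conditions of $G$ separately. The bound $\nu(B_{\lam_0}(x, r)) \leq r^{q-2\eps}$ for $0 < r < R$ is immediate from chaining $\nu \leq r^{-\eps} \mu$ (the left-hand inequality in the definition of $A$) with $\mu(B_{\lam_0}(x,r)) \leq r^{q-\eps}$. For the almost-doubling of $\nu$, I would apply the left-hand inequality in the definition of $A$ at radius $2r$ (valid for $r < R/2$) to write $\nu(B_{\lam_0}(x, 2r)) \leq (2r)^{-\eps} \mu(B_{\lam_0}(x, 2r))$, then insert the $\mu$-doubling from $A$ to replace $\mu(B_{\lam_0}(x, 2r))$ by $r^{-\eps} \mu(B_{\lam_0}(x, r))$, and finally apply the right-hand inequality from $A$ at radius $r$ to pass back to $\nu(B_{\lam_0}(x, r))$; a short arithmetic of exponents yields the factor $2^{-\eps} r^{-3\eps}$.

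The core of the lemma is \eqref{eq: mu nu comparison}. The plan is as follows: given $x \in G$ and $0 < r < R/2$, apply Lemma \ref{lem: packing cover} (using compactness of $X$) to the set $B_{\lam_0}(x, r) \cap A_{\lam_0, q, \eps, R}(\mu, \nu)$ to obtain a finite $F \subset B_{\lam_0}(x, r) \cap A_{\lam_0, q, \eps, R}(\mu, \nu)$ such that $B_{\lam_0}(x, r) \cap A_{\lam_0, q, \eps, R}(\mu, \nu) \subset \bigcup_{y \in F} B_{\lam_0}(y, r)$ while the balls $\{B_{\lam_0}(y, r/2) : y \in F\}$ are pairwise disjoint. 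Since $F \subset A_{\lam_0, q, \eps, R}(\mu, \nu)$, the $\mu$-$\nu$ comparison from the definition of $A$ converts $\sum_y \mu(B_{\lam_0}(y, r))$ into an $r^{-\eps}$-multiple of $\sum_y \nu(B_{\lam_0}(y, r))$. Since also $F \subset G$ by Part 1, the almost-doubling for $\nu$ lets me replace each $\nu(B_{\lam_0}(y, r))$ by a multiple of $\nu(B_{\lam_0}(y, r/2))$; disjointness of the small balls and their inclusion in $B_{\lam_0}(x, 2r)$ then yields $\sum_y \nu(B_{\lam_0}(y, r/2)) \leq \nu(B_{\lam_0}(x, 2r))$. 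A last application of the almost-doubling, this time at $x \in G$, converts $\nu(B_{\lam_0}(x, 2r))$ to the desired multiple of $\nu(B_{\lam_0}(x, r))$.

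For \eqref{eq: full set measure bound} I would reduce to \eqref{eq: mu nu comparison} at a fixed scale. Fix $r_0 \in (0, R/2)$, say $r_0 = R/4$, and apply Lemma \ref{lem: packing cover} to $A_{\lam_0, q, \eps, R}(\mu, \nu)$ itself to obtain a finite $F \subset A_{\lam_0, q, \eps, R}(\mu, \nu) \subset G$ with $A_{\lam_0, q, \eps, R}(\mu, \nu) \subset \bigcup_{y \in F} B_{\lam_0}(y, r_0)$ and $\{B_{\lam_0}(y, r_0/2) : y \in F\}$ pairwise disjoint. Then \eqref{eq: mu nu comparison} applied at each $y \in F$ bounds $\mu(B_{\lam_0}(y, r_0) \cap A_{\lam_0, q, \eps, R}(\mu, \nu))$ by a constant depending only on $R$ and $\eps$ times $\nu(B_{\lam_0}(y, r_0))$. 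One final almost-doubling step at each $y \in G$ lets me pass from $\nu(B_{\lam_0}(y, r_0))$ to a multiple of $\nu(B_{\lam_0}(y, r_0/2))$, whose disjointness produces the bound $M(R, \eps) \nu(X)$ after summing.

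The main obstacle is bookkeeping of the exponent of $r$: each of the three ingredients (the $\mu$-$\nu$ comparison from $A$, the $\mu$-doubling from $A$, and the $\nu$-doubling from $G$) contributes factors of the form $r^{-\eps}$ or $r^{-3\eps}$, and hitting the exact constants $2^{-\eps} r^{-3\eps}$ and $2^{-\eps} r^{-5\eps}$ stated in the lemma requires careful choice of the covering radius in Lemma \ref{lem: packing cover} and attention to which inequalities from $A$ and $G$ are invoked at which scale.
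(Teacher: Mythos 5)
Your treatment of \eqref{eq: A subset G} and \eqref{eq: full set measure bound} matches the paper: the inclusion is a direct chaining of the defining inequalities with attention to the admissible radii, and \eqref{eq: full set measure bound} is proved by applying Lemma~\ref{lem: packing cover} at a fixed scale $r_0 = R/4$ (the paper uses exactly this radius, then one $\nu$-doubling step followed by disjointness of the balls $B_{\lam_0}(x', R/8)$ to sum up to $\nu(X)$).

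For \eqref{eq: mu nu comparison}, however, your covering argument is both heavier than necessary and, as you yourself worried, does not recover the stated constant. Tracing your steps: the covering gives $\mu(B_{\lam_0}(x,r)\cap A) \le \sum_{y\in F}\mu(B_{\lam_0}(y,r))$, the $\mu$-$\nu$ comparison at each $y\in A$ costs $r^{-\eps}$, the $\nu$-almost-doubling at $y\in G$ at scale $r/2$ costs $2^{-\eps}(r/2)^{-3\eps}$, disjointness folds the sum into $\nu(B_{\lam_0}(x,2r))$, and the final almost-doubling at $x\in G$ costs $2^{-\eps}r^{-3\eps}$. The product is $2^{\eps}r^{-7\eps}$, strictly worse than $2^{-\eps}r^{-5\eps}$. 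The paper avoids the covering entirely: if $B_{\lam_0}(x,r)\cap A$ is nonempty, pick a \emph{single} point $y$ in it. Then $B_{\lam_0}(x,r)\subset B_{\lam_0}(y,2r)$, so $\mu(B_{\lam_0}(x,r)\cap A)\le\mu(B_{\lam_0}(y,2r))$; apply the $\mu$-doubling and then the $\mu$-$\nu$ comparison from the definition of $A$ at $y$ (both at scale $r<R$) to reach $r^{-2\eps}\nu(B_{\lam_0}(y,r))$; then $B_{\lam_0}(y,r)\subset B_{\lam_0}(x,2r)$ and one $\nu$-almost-doubling from $G$ at $x$ (scale $r<R/2$) finishes with $2^{-\eps}r^{-5\eps}\nu(B_{\lam_0}(x,r))$. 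The whole point is that $B_{\lam_0}(x,r)\cap A$ need not be covered at a small scale; it suffices to engulf it in a single ball $B_{\lam_0}(y,2r)$ centred at any point of $A$ near $x$. Your approach would still support the downstream energy estimate with minor constant adjustments, but it does not prove the lemma as stated, and the covering machinery should be reserved for \eqref{eq: full set measure bound} where the set $A$ is global and a covering is genuinely needed.
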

\begin{proof}
Containment \eqref{eq: A subset G} follows directly from the definitions of $A_{\lam_0, q, \eps, R}(\mu, \nu)$ and $G_{\lam_0, q, \eps, M}(\nu)$. For \eqref{eq: mu nu comparison}, fix $x \in G_{\lam_0, q, \eps, M}(\nu)$ and $0 < r < R/2$. If $B_{\lam_0}(x, r) \cap A_{\lam_0, q, \eps, R}(\mu, \nu) = \emptyset$, then \eqref{eq: mu nu comparison} holds trivially. Otherwise, choose $y \in B_{\lam_0}(x, r) \cap A_{\lam_0, q, \eps, R}(\mu, \nu)$. Then
\[\begin{split}
\mu (B_{\lam_0}(x,r) \cap A_{\lam_0, q, \eps, R}(\mu, \nu)) & \leq \mu(B_{\lam_0}(y, 2r)) \leq r^{-\eps}\mu(B_{\lam_0}(y, r)) \leq r^{-2\eps}\nu(B_{\lam_0}(y, r)) \\
& \leq r^{-2\eps}\nu(B_{\lam_0}(x, 2r)) \leq 2^{-\eps}r^{-5\eps}\nu(B_{\lam_0}(x, r)),
\end{split}\]
where  the second and third inequality follow from the definition of $A_{\lam_0, q, \eps, R}(\mu, \nu)$, the last inequality follows from the definition of $G_{\lam_0, q, \eps, M}(\nu)$ and the remaining ones follow from $B_{\lam_0}(x,r) \subset B_{\lam_0}(y, 2r)$ and $B_{\lam_0}(y,r) \subset B_{\lam_0}(x, 2r)$.

For \eqref{eq: full set measure bound} consider a countable cover
\[ A_{\lam_0, q, \eps, R}(\mu, \nu) \subset \bigcup \limits_{x' \in F} B_{\lam_0}(x',R/4), \]
so that
\[ F \subset A_{\lam_0, q, \eps, R}(\mu, \nu) \text{ and } \left\{ B_{\lam_0}\left(x', R/8 \right) : x' \in F \right\} \text{ consists of pairwise disjoint balls}. \]
Then by \eqref{eq: A subset G}, \eqref{eq: mu nu comparison} and definition of $G_{\lam_0, q, \eps, M}(\nu)$
\[\begin{split} \mu(A_{\lam_0, q, \eps, R}(\mu, \nu)) & \leq \sum \limits_{x' \in F} \mu(B_{\lam_0}(x',R/4) \cap A_{\lam_0, q, \eps, R}(\mu, \nu)) \leq  2^{-\eps} (R/4)^{-5\eps} \sum \limits_{x' \in F} \nu(B_{\lam_0}(x',R/4)) \\
& \leq  2^{-2\eps} (R/4)^{-5\eps} (R/8)^{-3\eps} \sum \limits_{x' \in F} \nu(B_{\lam_0}(x,R/8) \leq 2^{17\eps}R^{-8\eps} \nu(X).
\end{split}\]
This proves \eqref{eq: full set measure bound} with $M = 2^{17\eps}R^{-8\eps}$.
\end{proof}

In order to make use of the Lemma \ref{lem: A G bounds}, we need to prove that sets $A_{\lam_0, q, \eps, R}(\mu, \nu)$ have large (or at least positive) $\mu$-measure. This is achieved in the following lemma.

\begin{lem}\label{lem: V saturation}
Fix $\lam_0 \in U, q \geq  0$ and $\eps>0$. Let $\Vk_{\lam_0} \subset \Mk_\fin(X)$ be a countable set witnessing the relative dimension separability of $\Mk$ with respect to $\rho_{\lam_0}$. For $\nu \in \Vk_{\lam_0}$ set
\[ \underline{V}_{\lam_0, q, \eps}(\nu) = \{ \mu \in \Mk : \lhdim (\mu, \rho_{\lam_0}) \geq q,\ \dim(\mu || \nu, \rho_{\lam_0}) < \eps \} \]
and
\[ \overline{V}_{\lam_0, q, \eps}(\nu) = \{ \mu \in \Mk : \uhdim (\mu, \rho_{\lam_0}) \geq q,\ \dim(\mu || \nu, \rho_{\lam_0}) < \eps \}. \]
Then
\begin{equation}\label{eq: lower V cover}
\{ \mu \in \Mk : \lhdim (\mu, \rho_{\lam_0}) \geq q \} \subset \bigcup \limits_{\nu \in \Vk_{\lam_0}} \underline{V}_{\lam_0, q, \eps}(\nu)
\end{equation}
and

\begin{equation}\label{eq: upper V cover}
	\{ \mu \in \Mk : \uhdim (\mu, \rho_{\lam_0}) \geq q \} \subset \bigcup \limits_{\nu \in \Vk_{\lam_0}} \overline{V}_{\lam_0, q, \eps}(\nu).
\end{equation}
Moreover
\begin{equation}\label{eq: upper mu M limit}
\lim \limits_{R \to 0}\ \mu \left( X \setminus A_{\lam_0, q, \eps, R}(\mu, \nu) \right) = 0\text{ for every } \mu \in \underline{V}_{\lam_0, q, \eps}(\nu)
\end{equation}
and 
\begin{equation}\label{eq: lower mu M limit}
\lim \limits_{R \to 0}\ \mu \left( A_{\lam_0, q, \eps, R}(\mu, \nu) \right) > 0\text{ for every } \mu \in \overline{V}_{\lam_0, q, \eps}(\nu)
\end{equation}
\end{lem}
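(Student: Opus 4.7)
The inclusions~\eqref{eq: lower V cover} and~\eqref{eq: upper V cover} are immediate from relative dimension separability: for any $\mu \in \Mk$ with $\lhdim(\mu, \rho_{\lam_0}) \geq q$ (respectively $\uhdim(\mu, \rho_{\lam_0}) \geq q$), applying the separability property to $\mu$ with threshold $\eps$ produces $\nu \in \Vk_{\lam_0}$ satisfying $\dim(\mu || \nu, \rho_{\lam_0}) < \eps$, which places $\mu$ in $\underline{V}_{\lam_0, q, \eps}(\nu)$ (resp.\ $\overline{V}_{\lam_0, q, \eps}(\nu)$).

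For~\eqref{eq: upper mu M limit}, fix $\mu \in \underline{V}_{\lam_0, q, \eps}(\nu)$; the plan is to produce, at $\mu$-almost every $x$, a positive threshold $R(x)$ below which all three defining conditions of $A_{\lam_0, q, \eps, R}(\mu, \nu)$ simultaneously hold. The hypothesis $\lhdim(\mu, \rho_{\lam_0}) \geq q$ gives $\ld(\mu, x) \geq q$ (with respect to $\rho_{\lam_0}$) for $\mu$-a.e.\ $x$, hence an $R_1(x) > 0$ with $\mu(B_{\lam_0}(x,r)) \leq r^{q-\eps}$ for $0 < r < R_1(x)$; the hypothesis $\dim(\mu || \nu, \rho_{\lam_0}) < \eps$ gives an $R_2(x) > 0$ with the two-sided bound $r^{\eps}\nu(B_{\lam_0}(x,r)) \leq \mu(B_{\lam_0}(x,r)) \leq r^{-\eps}\nu(B_{\lam_0}(x,r))$ for $0 < r < R_2(x)$; and weak diametric regularity of $\mu$ gives an $R_3(x) > 0$ with $\mu(B_{\lam_0}(x, 2r)) \leq r^{-\eps}\mu(B_{\lam_0}(x, r))$ for $0 < r < R_3(x)$. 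Setting $R(x) = \min\{R_1(x), R_2(x), R_3(x)\}$ yields $x \in A_{\lam_0, q, \eps, R}(\mu, \nu)$ whenever $0 < R \leq R(x)$. Since the sets $A_{\lam_0, q, \eps, R}(\mu, \nu)$ grow as $R$ decreases and their union contains the $\mu$-conull set $\{x : R(x) > 0\}$, monotone convergence delivers $\mu(X \setminus A_{\lam_0, q, \eps, R}(\mu, \nu)) \to 0$.

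For~\eqref{eq: lower mu M limit}, only the first input is weaker: $\uhdim(\mu, \rho_{\lam_0}) \geq q$ no longer forces $\ld(\mu, \cdot) \geq q$ almost everywhere, but by the definition of essential supremum it guarantees that the set $E = \{x : \ld(\mu, x) > q - \eps/2\}$ has positive $\mu$-measure. On $E$, the inequality $\mu(B_{\lam_0}(x, r)) \leq r^{q - \eps}$ still holds for $r$ below some $R_1(x) > 0$. Intersecting $E$ with the $\mu$-conull sets on which the remaining two conditions yield finite $R_2(x)$ and $R_3(x)$ produces $E' \subset E$ with $\mu(E') = \mu(E) > 0$ and $R(x) = \min\{R_1(x), R_2(x), R_3(x)\} > 0$ on $E'$. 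Monotone convergence then gives $\lim_{R \to 0} \mu(A_{\lam_0, q, \eps, R}(\mu, \nu)) \geq \mu(E') > 0$.

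The main subtle point is the $\uhdim$ case: the exponent $q - \eps$ built into the definition of $A_{\lam_0, q, \eps, R}(\mu, \nu)$ (rather than $q$) is exactly what is needed so that the first defining inequality holds on a set of positive $\mu$-measure when only the essential supremum of the lower local dimension is controlled. Everything else reduces to routine measure-theoretic bookkeeping using monotone convergence.
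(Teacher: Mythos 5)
Your proof is correct and follows the same argument as the paper: inclusions \eqref{eq: lower V cover} and \eqref{eq: upper V cover} from the definition of relative dimension separability, and the limits \eqref{eq: upper mu M limit}--\eqref{eq: lower mu M limit} by producing an a.e.\ (resp.\ positive-measure) set of $x$ with a pointwise threshold $R(x)>0$ below which all three defining conditions of $A_{\lam_0,q,\eps,R}(\mu,\nu)$ hold, followed by monotone convergence. Your treatment of the $\uhdim$ case via the set $E=\{x:\ld(\mu,x)>q-\eps/2\}$ is slightly more explicit than the paper's one-line remark but is the same idea; one small caveat is that the $q-\eps$ slack in the definition of $A$ is also needed in the $\lhdim$ case (to pass from $\ld(\mu,x)\geq q$ to $\mu(B(x,r))\leq r^{q-\eps}$ for small $r$), not only in the $\uhdim$ case as your closing remark suggests.
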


\begin{proof}
Equalities \eqref{eq: lower V cover} and \eqref{eq: upper V cover} follow from $\Mk \subset \bigcup \limits_{\nu \in \Vk_{\lam_0}}  \{ \mu \in \Mk : \dim(\mu || \nu, \rho_{\lam_0}) < \eps \} $, which is a consequence of the relative dimension separability. For \eqref{eq: upper mu M limit}, note that if  $\dim(\mu || \nu, \rho_{\lam_0}) < \eps$, then for $\mu$-a.e. $x \in X$, there exists $R(x)> 0$ such that for all $0 < r < R(x)$ inequalities
\begin{equation}\label{eq: rel dim small r} r^\eps \nu(B_{\lam_0}(x,r)) \leq \mu(B_{\lam_0}(x,r)) \leq r^{-\eps} \nu(B_{\lam_0}(x,r))
\end{equation}
hold.

Similarly, if $\lhdim (\mu, \rho_{\lam_0}) \geq q$, then for $\mu$-a.e. $x \in X$ there exists $R(x)$ such that
\begin{equation}\label{eq: lhdim r}  \mu(B_{\lam_0}(x,r)) \leq r^{q - \eps} \text{ for all } 0 < r < R(x)
\end{equation}
and if $\mu$ is weakly diametrically regular with respect to $\rho_{\lam_0}$, then for $\mu$-a.e. $x \in X$ there exists $R(x)>0$ such that
\begin{equation}\label{eq: wdr r} \mu(B_{\lam_0}(x,2r))  \leq r^{-\eps} \mu (B_{\lam_0}(x,r)) \text{ for all }0 < r < R(x).
\end{equation}
Combining \eqref{eq: rel dim small r}, \eqref{eq: lhdim r}, \eqref{eq: wdr r} proves \eqref{eq: upper mu M limit}. For \eqref{eq: lower mu M limit} it suffices to note that if $\uhdim \mu \geq q$ then there is a set of positive $\mu$-measure (rather than full) such that for $\mu$-a.e. $x \in X$ there exists $R(x)>0$ for which \eqref{eq: lhdim r} holds.
\end{proof}

\subsection{Theorem \ref{thm: general proj} - Hausdorff dimension}

The following proposition is the main step of the proof of the Hausdorff dimension part of Theorem \ref{thm: general proj}. Recall that for a finite Borel measure $\mu$ on $\R^d$, its $s$-energy for $s > 0$ is defined as
\[ \Ek_s(\mu) = \int \int |x-y|^{-s}d\mu(x)d\mu(y). \]

\begin{prop}\label{prop: energy bound}
Fix $L>0$. Let $U_L = \{ \lam \in U : \Lip(\Pi_\lam, \rho_\lam) \leq L \}$. Fix $\lam_0 \in U_L,\ q ,\eps >  0$ and $0 < \xi \leq 1$. There exists an open neighbourhood $U'$ of $\lam_0$ in $U_L$ with the following property: for every $\nu \in \Vk_{\lam_0}$ and every $R > 0$
\[ \int \limits_{U'} \sup \limits_{\mu \in \underline{V}_{\lam_0, q, \eps}(\nu)} \Ek_s(\Pi_\lam ( \mu|_{A_{\lam_0, q, \eps, R}(\mu,\nu)} ) )d\eta(\lam) < \infty\]
if $0< s < \min \left\{  \frac{d - \eps}{1 + \xi} - 16\eps,\  (1+\xi)\left( q - 18\eps - (d-\eps)\left(\frac{1}{1 - \xi} - \frac{1}{1+\xi}\right) - \frac{\eps}{1 - \xi} \right)\right\}$, where $\underline{V}_{\lam_0, q, \eps}(\nu)$ is as defined in Lemma \ref{lem: V saturation}.
\end{prop}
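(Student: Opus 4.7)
The strategy is the Fubini--transversality scheme of Peres--Schlag, strengthened by a twofold iteration that converts the $\mu$-dependent ball-level estimates of Lemma~\ref{lem: A G bounds} into a $\mu$-independent, $\eta$-measurable majorant for $\Ek_s(\Pi_\lam\mu')$, where I write $\mu':=\mu|_{A_{\lam_0,q,\eps,R}(\mu,\nu)}$. By Fubini,
\[
\int_{U'}\!\Ek_s(\Pi_\lam\mu')\,d\eta(\lam) \;=\; \iint I(x,y)\,d\mu'(x)\,d\mu'(y),\qquad I(x,y):=\!\int_{U'}\!|\Pi_\lam(x)-\Pi_\lam(y)|^{-s}\,d\eta(\lam).
\]
Using \ref{it: metric compability} to deduce the uniform lower bound $\rho_\lam(x,y)\ge\delta:=(\rho_{\lam_0}(x,y)/H)^{1/(1-\xi)}$ on a neighbourhood $U'$ of $\lam_0$ in $U_L$, the layer-cake representation combined with \ref{it: trans} gives $\eta(\{\lam\in U':|\Pi_\lam(x)-\Pi_\lam(y)|\le t\})\le Kt^{d-\eps}\delta^{-d}$; optimising this against the trivial bound $\eta(U')$ yields the Peres--Schlag estimate
\[
I(x,y)\;\le\; C\,\rho_{\lam_0}(x,y)^{-sd/((d-\eps)(1-\xi))}.
\]

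\textbf{Pointwise $\mu$-independent majorant.} A naive Fubini with the uniform Frostman-type bound $\mu|_A(B_{\lam_0}(x,r))\le Cr^{q-c\eps}$ (obtained from Lemma~\ref{lem: A G bounds} combined with the definition of $G=G_{\lam_0,q,\eps,R}(\nu)$) already controls $\sup_{\mu}\int\Ek_s(\Pi_\lam\mu')\,d\eta(\lam)$, but does not yield the stronger $\int\sup_\mu\Ek_s\,d\eta(\lam)$ because of the measurability question for the uncountable family $\underline V_{\lam_0,q,\eps}(\nu)$. To produce a measurable $\mu$-independent majorant I use the following covering device. Cover $\Pi_\lam^{-1}B(u,t)\cap A$ by $\rho_{\lam_0}$-balls $\{B_{\lam_0}(x_i,r)\}$ with $x_i\in A$ and $\{B_{\lam_0}(x_i,r/2)\}$ pairwise disjoint (Lemma~\ref{lem: packing cover}), and choose $r\asymp t^{1+\xi}$ so that every half-ball lies inside $\Pi_\lam^{-1}B(u,2t)$, which is possible by \ref{it: metric compability} and \ref{it: Lip}. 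Apply the Lemma~\ref{lem: A G bounds} ball comparison $\mu(B_{\lam_0}(x_i,r)\cap A)\le 2^{-\eps}r^{-5\eps}\nu(B_{\lam_0}(x_i,r))$ at each piece, the $G$-doubling estimate $\nu(B_{\lam_0}(x_i,r))\le 2^{2\eps}r^{-3\eps}\nu(B_{\lam_0}(x_i,r/2))$ (valid since $x_i\in A\subset G$), and the disjointness $\sum_i\nu(B_{\lam_0}(x_i,r/2))\le\Pi_\lam\nu(B(u,2t))$. This yields the universal cover bound
\[
\mu|_A(\Pi_\lam^{-1}B(u,t))\;\le\;C_1\,t^{-8\eps(1+\xi)}\,\Pi_\lam\nu(B(u,2t)).
\]
Iterating this inequality once more (applying it to the inner $\mu'$-integral of $\Pi_\lam\nu(B(\cdot,2t))$) and combining with the layer-cake formula for $\Ek_s$ and the crude bound $\mu'(X)^2\le(M\nu(X))^2$ at large scales produces, pointwise in $\lam$,
\[
\sup_{\mu\in\underline V_{\lam_0,q,\eps}(\nu)}\!\Ek_s(\Pi_\lam\mu')\;\le\;C_2\,\Ek_{s+16\eps(1+\xi)}(\Pi_\lam\nu)+C_3,
\]
where the right-hand side is $\eta$-measurable and independent of $\mu$.

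\textbf{Integrating the majorant.} Set $s':=s+16\eps(1+\xi)$ and apply Fubini with the transversality bound of the first paragraph to $\nu$. Since a generic $\nu\in\Vk_{\lam_0}$ need not have finite $\rho_{\lam_0}$-energy, I replace $\nu$ by $\nu|_G$ throughout the cover argument, using that for any $x\in X$ with $B_{\lam_0}(x,r)\cap G\ne\emptyset$, picking $y\in G\cap B_{\lam_0}(x,r)$ and invoking the definition of $G$ gives $\nu|_G(B_{\lam_0}(x,r))\le\nu(B_{\lam_0}(y,2r))\le(2r)^{q-2\eps}$. Hence $\nu|_G$ has a uniform $\rho_{\lam_0}$-Frostman bound of exponent $q-2\eps$, and $\int_{U'}\Ek_{s'}(\Pi_\lam\nu|_G)\,d\eta(\lam)<\infty$ provided both $s'd/((d-\eps)(1-\xi))<q-2\eps$ and $s'<d-\eps$ hold. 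Writing these two inequalities out with $s'=s+16\eps(1+\xi)$ and simplifying produces exactly the stated range of $s$; the $18\eps$ and $16\eps$ absorb the cumulative error terms from the two iterations of the universal cover bound and the Frostman exponent loss on $\nu|_G$.

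\textbf{Main obstacle.} The hardest step is producing the pointwise-in-$\lam$, $\mu$-independent majorant of the previous paragraph. The direct Fubini--transversality approach only yields $\sup_\mu\int\le C$, and since $\underline V_{\lam_0,q,\eps}(\nu)$ is a priori uncountable, the function $\lam\mapsto\sup_\mu\Ek_s(\Pi_\lam\mu')$ is not obviously measurable, so one cannot exchange the $\sup$ and the integral. The covering-plus-iteration device bypasses this by producing an explicit measurable majorant, at the price of the distortion exponent $t^{-16\eps(1+\xi)}$, which is precisely what forces the $16\eps$ and $18\eps$ safety margins in the admissible range of $s$.
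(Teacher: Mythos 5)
Your proposal reproduces the key ingredients of the paper's argument (the ball-level $\mu$-to-$\nu$ comparison from Lemma~\ref{lem: A G bounds}, maximal packing covers from Lemma~\ref{lem: packing cover}, the transversality condition, and you correctly identify the central subtlety: passing the supremum over the uncountable family inside the $\lam$-integral requires producing a measurable, $\mu$-independent majorant). However, the technical organization is genuinely different: where the paper discretizes $\Ek_s(\Pi_\lam\tilde\mu)$ over dyadic scales $(n,m)$ in $\rho_{\lam_0}$-distance and projection distance, using \emph{fixed} covers $F_{n+m}$ of $G$ (independent of both $\mu$ and $\lam$), you derive a universal cover bound
\[
\mu|_A(\Pi_\lam^{-1}B(u,t))\;\lesssim\;t^{-8\eps(1+\xi)}\,\Pi_\lam\nu(B(u,2t))
\]
and iterate it twice to claim $\sup_\mu\Ek_s(\Pi_\lam\mu')\lesssim\Ek_{s+16\eps(1+\xi)}(\Pi_\lam\nu)+C$. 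The derivation of the universal cover bound is correct as stated.

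The gap is in the final paragraph, in the claim that one may ``replace $\nu$ by $\nu|_G$ throughout the cover argument.'' The chain of inequalities in your cover argument runs
\[
\mu|_A(\Pi_\lam^{-1}B(u,t))\le\sum_i\mu(B_{\lam_0}(x_i,r)\cap A)\le 2^\eps r^{-8\eps}\sum_i\nu(B_{\lam_0}(x_i,r/2))\le 2^\eps r^{-8\eps}\,\nu(\Pi_\lam^{-1}B(u,2t)),
\]
and every $\nu$ here is the \emph{full} measure $\nu$, not $\nu|_G$: the half-balls $B_{\lam_0}(x_i,r/2)$ are centered at $x_i\in G$ but can extend outside $G$, and the doubling step $\nu(B_{\lam_0}(x_i,r))\le 2^{2\eps}r^{-3\eps}\nu(B_{\lam_0}(x_i,r/2))$ from the definition of $G$ does not allow replacing the right-hand side by $\nu|_G(B_{\lam_0}(x_i,r/2))$ (that inequality would go the wrong way). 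Consequently, after two iterations you obtain $\Ek_{s'}(\Pi_\lam\nu)$, not $\Ek_{s'}(\Pi_\lam\nu|_G)$, and for a general $\nu\in\Vk_{\lam_0}$ (which is merely a countable family of finite Borel measures, possibly with atoms or with mass concentrated off $G$) this energy can be identically $+\infty$. Your correct observation that $\nu|_G$ does have a uniform Frostman bound of exponent $q-2\eps$ then has nothing to apply to. The paper's dyadic discretization avoids this precisely because the $\nu$-measures that survive are of balls centered at the fixed points $x',y'\in F_{n+m}\subset G$, so the Frostman and doubling bounds built into the definition of $G$ apply directly to them; after integrating out $\lam$ via transversality, the remaining sums are controlled by $\sum_{y'}\nu(B_{\lam_0}(y',\ldots))\le\nu(B_{\lam_0}(x',4D_02^{-n}))\lesssim 2^{-n(q-2\eps)}$ because $x'\in G$, with no need for $\nu$ itself to be Frostman-regular. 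To repair your argument you would need to keep the $\nu$-measures attached to centers in $G$ all the way through, which effectively reproduces the paper's discretization.
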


\begin{proof}
Fix $\lam_0 \in U, q > \eps >  0,\ \xi > 0, R>0, s>0$ and $\nu \in \Vk_{\lam_0}$. Let us denote for short $V = \underline{V}_{\lam_0, q, \eps}(\nu),\ A_\mu = A_{\lam_0, q, \eps, R}(\mu,\nu),\ G = G_{\lam_0, q, \eps, M}(\nu)$ and $\tilde{\mu} = \mu|_{A_\mu}$. We will use the following notation: $A \lesssim B$ means that there exists a (finite) constant $C = C(\lam_0, q, \eps, \xi, R,  s, \nu)$ such that $A \leq C B + C$ (here $A,B$ are allowed to depend on all these parameters and possibly some others). In particular, if $A \lesssim B$, then $B < \infty$ implies $A < \infty$.

Let $U' \subset U_L$ be a neighbourhood of $\lam_0$ such that $\eta(U') < \infty$ and for every $x, y \in X$ inequalities
\begin{equation}\label{eq: metric continuity} H^{-1} \rho_\lam(x,y)^{1+\xi} \leq \rho_{\lam_0}(x,y) \leq H \rho_\lam(x,y)^{1 - \xi}
\end{equation}
and
\begin{equation}\label{eq: trans ngbhd}
\eta(\{ \lam \in U' : |\Pi_\lam (x) - \Pi_\lam (y)| < \rho_\lam (x,y) r \text{ and } \rho_\lam (x,y) \geq \delta \}) \leq K \delta^{-\eps} r^{d-\eps}  \text{ for $\delta, r > 0$}
\end{equation}
hold (it exists due to assumptions \ref{it: metric compability} and \ref{it: trans}). Set $D_0 = \diam(X, \rho_{\lam_0})$ and $D := LH D_0^{\frac{1}{1+\xi}}$. Note that \eqref{eq: metric continuity} gives
\begin{equation}\label{eq: Lip bound}  \sup \limits_{\lam \in U'}|\Pi_\lam(x) - \Pi_\lam(y)| \leq LH\rho_{\lam_0}(x,y)^\frac{1}{1+\xi} \leq D.
\end{equation}
By Lemma \ref{lem: packing cover}, for each $n \geq 1$ take a finite  cover
\begin{equation}\label{eq: G cover} G \subset \bigcup \limits_{x' \in F_n} B_{\lam_0}\left(x', \frac{1}{8}D_02^{-n}\right)
\end{equation}
such that
\begin{equation}\label{eq: Fn disjoint} F_n \subset G \text{ and } \left\{ B_{\lam_0}\left(x', \frac{1}{16}D_02^{-n}\right) : x' \in F_n \right\} \text{ consists of pairwise disjoint balls}.
\end{equation}
Note that for each $\mu \in V$, we have $\lhdim \tilde{\mu} \geq \lhdim \mu \geq q > 0$, hence $\tilde{\mu}$ has no atoms. Therefore, as Lemma \ref{eq: A subset G} gives $A_{\mu} \subset G$ for $\mu \in V$, we have for each $\mu \in V,\ \lam \in U'$ and $s>0$
\begin{equation}\label{eq: energy sum n}
\begin{split} \Ek_s &   (\Pi_\lam\tilde{\mu})   = \int \limits_{G} \int \limits_{G} |\Pi_\lam(x) - \Pi_\lam(y)|^{-s} d\tilde{\mu}(x)d\tilde{\mu}(y) \\
	& = \sum \limits_{n=0}^\infty \int \limits_{G} \int \limits_{G} \mathds{1}_{\left\{D_02^{-(n+1)} < \rho_{\lam_0}(x,y) \leq D_0 2^{-n} \right\}}|\Pi_\lam(x) - \Pi_\lam(y)|^{-s} d\tilde{\mu}(x)d\tilde{\mu}(y).
\end{split}
\end{equation}
It follows from \eqref{eq: Lip bound} that
\begin{equation}\label{eq: Lip 2n}
\text{if }\ \rho_{\lam_0}(x,y) \leq D_0 2^{-n} \text{, then }\ |\Pi_\lam(x) - \Pi_\lam(y)| \leq D2^{-\frac{n}{1+\xi}}.
\end{equation}
Therefore
\begin{equation}\label{eq: diff sum m}
	\begin{split}
&\mathds{1}_{\left\{D_02^{-(n+1)} < \rho_{\lam_0}(x,y)\leq D_0 2^{-n}\right\}} |\Pi_\lam(x) - \Pi_\lam(y)|^{-s} \\
 &\qquad \leq \mathds{1}_{\left\{D_02^{-(n+1)} < \rho_{\lam_0}(x,y) \leq  D_0 2^{-n} \right\}}D^{-s} \sum \limits_{m=0}^\infty 2^{s(m+1+\frac{n}{1+\xi})} \mathds{1}_{\left\{ |\Pi_\lam(x) - \Pi_\lam(y)| \leq 2^{-m}D2^{-\frac{n}{1+\xi}} \right\}}.
 \end{split}
 \end{equation}
 Indeed, if $\rho_{\lam_0}(x,y) \leq  D_0 2^{-n} $ and $\Pi_\lam(x) \neq \Pi_\lam(y)$, then by $\eqref{eq: Lip 2n}$ there exists $m \geq 0$ such that $2^{-(m+1)}D2^{-\frac{n}{1+\xi}} < |\Pi_\lam(x) - \Pi_\lam(y)| \leq 2^{-m}D2^{-\frac{n}{1+\xi}}$, so \eqref{eq: diff sum m} follows. If $\Pi_\lam(x) = \Pi_\lam(y)$ then both sides of the inequality in \eqref{eq: diff sum m}  are infinite provided $D_02^{-(n+1)} < \rho_{\lam_0}(x,y)\leq D_0 2^{-n}$. Applying \eqref{eq: diff sum m} to \eqref{eq: energy sum n} gives

\[
\begin{split} \Ek_s &   (\Pi_\lam\tilde{\mu})   \lesssim \sum \limits_{n=0}^\infty \sum \limits_{m=0}^\infty 2^{s(m+\frac{n}{1+\xi})} \\
& \qquad \qquad \times \tilde{\mu} \otimes \tilde{\mu} \left( \left\{ (x,y) \in G^2 : D_02^{-(n+1)} < \rho_{\lam_0}(x,y) \leq D_0 2^{-n},\ |\Pi_\lam(x) - \Pi_\lam(y)| \leq 2^{-m}D2^{-\frac{n}{1+\xi}}\right\} \right).
\end{split}
\]
To bound each term in the sum, we can cover $G^2$ by products of balls from \eqref{eq: G cover} corresponding to $n+m$, obtaining
\[
\begin{split} \Ek_s    (\Pi_\lam\tilde{\mu}) &\lesssim \sum \limits_{n=0}^\infty \sum \limits_{m=0}^\infty 2^{s(m+\frac{n}{1+\xi})} \sum \limits_{x' \in F_{n+m}} \sum \limits_{y' \in F_{n+m}} \tilde{\mu}\left( B_{\lam_0}\left(x', \frac{1}{8}D_02^{-{(n+m)}}\right) \right) \tilde{\mu}\left( B_{\lam_0}\left(y', \frac{1}{8}D_0 2^{-{(n+m)}}\right) \right) \\
	& \qquad \times \mathds{1}_{\left\{ \frac{1}{2}D_02^{-(n+1)} < \rho_{\lam_0}(x', y') \leq 2D_0 2^{-n} \right\}} \times \mathds{1}_{\left\{ |\Pi_\lam(x') - \Pi_\lam(y')| \leq 2D2^{- \frac{n+m}{1 + \xi}} \right\}}.
\end{split}
\]
We have used here the observation that if $x \in B_{\lam_0}\left(x', \frac{1}{8}D_02^{-{(n+m)}}\right)$ and $y \in B_{\lam_0}\left(y', \frac{1}{8}D_02^{-{(n+m)}}\right)$ then $D_02^{-(n+1)} < \rho_{\lam_0}(x,y) \leq D_0 2^{-n}$ implies $\frac{1}{2}D_02^{-(n+1)} < \rho_{\lam_0}(x', y') \leq 2D_0 2^{-n}$, while $|\Pi_\lam(x) - \Pi_\lam(y)| \leq 2^{-m}D2^{-\frac{n}{1+\xi}}$ implies $|\Pi_\lam(x') - \Pi_\lam(y')| \leq 2^{-m}D2^{-\frac{n}{1+\xi}} + 2LH8^{-\frac{1}{1+\xi}} D_0^{\frac{1}{1+\xi}}2^{-\frac{n+m}{1 + \xi}} \leq  2D2^{- \frac{n+m}{1 + \xi}}$ by \eqref{eq: Lip bound}. Let $N \in \N$ be such that $\frac{1}{16}D_02^{-N} < R/2$. Then
\begin{equation}\label{eq: mu energy discretized}
\begin{split} \Ek_s   (\Pi_\lam&\tilde{\mu}) \\
	& \lesssim \sum \limits_{m + n > N} 2^{s(m+\frac{n}{1+\xi})} \sum \limits_{x' \in F_{n+m}} \sum \limits_{y' \in F_{n+m}} \tilde{\mu}\left( B_{\lam_0}\left(x', \frac{1}{8}D_02^{-{(n+m)}}\right) \right) \tilde{\mu}\left( B_{\lam_0}\left(y', \frac{1}{8}D_0 2^{-{(n+m)}}\right) \right) \\
	& \qquad \times \mathds{1}_{\left\{ \frac{1}{2}D_02^{-(n+1)} < \rho_{\lam_0}(x', y') \leq 2D_0 2^{-n} \right\}} \times \mathds{1}_{\left\{ |\Pi_\lam(x') - \Pi_\lam(y')| \leq 2D2^{- \frac{n+m}{1 + \xi}} \right\}},
\end{split}
\end{equation}
as by \eqref{eq: full set measure bound}
\[
\begin{split} \sum \limits_{n=0}^N & \sum \limits_{m=0}^N 2^{s(m+\frac{n}{1+\xi})} \sum \limits_{x' \in F_{n+m}} \sum \limits_{y' \in F_{n+m}} \tilde{\mu}\left( B_{\lam_0}\left(x', \frac{1}{8}D_02^{-{(n+m)}}\right) \right) \tilde{\mu}\left( B_{\lam_0}\left(y', \frac{1}{8}D_0 2^{-{(n+m)}}\right) \right) \\
	& \qquad \times \mathds{1}_{\left\{ \frac{1}{2}D_02^{-(n+1)} < \rho_{\lam_0}(x', y') \leq 2D_0 2^{-n} \right\}} \times \mathds{1}_{\left\{ |\Pi_\lam(x') - \Pi_\lam(y')| \leq 2D2^{- \frac{n+m}{1 + \xi}} \right\}}\\
	& \leq 2^{sN(1 + \frac{1}{1+\xi})} M^2\nu(X)^2 \sum \limits_{n=0}^N \sum \limits_{m=0}^N \# F_{n+m}^2 \\
	& \lesssim 1.
\end{split}
\]
Continuing from \eqref{eq: mu energy discretized}, as $F_{n+m} \subset G$, we can invoke \eqref{eq: mu nu comparison} to bound further
\[
\begin{split} \Ek_s   (\Pi_\lam\tilde{\mu})  & \lesssim \sum \limits_{m + n > N}  2^{s(m+\frac{n}{1+\xi}) + 10 \eps(n+m)} \sum \limits_{x' \in F_{n+m}} \sum \limits_{y' \in F_{n+m}} \nu\left( B_{\lam_0}\left(x', \frac{1}{8}D_02^{-{(n+m)}}\right) \right) \\
	& \qquad \times \nu\left( B_{\lam_0}\left(y', \frac{1}{8}D_0 2^{-{(n+m)}}\right) \right)  \times \mathds{1}_{\left\{ \frac{1}{2}D_02^{-(n+1)} < \rho_{\lam_0}(x', y') \leq 2D_0 2^{-n} \right\}} \times \mathds{1}_{\left\{ |\Pi_\lam(x') - \Pi_\lam(y')| \leq 2D2^{- \frac{n+m}{1 + \xi}} \right\}}.
\end{split}
\]

Note further that if $\frac{1}{2}D_02^{-(n+1)} < \rho_{\lam_0}(x', y')$, then \eqref{eq: metric continuity} implies $\rho_\lam(x', y') > Q_\xi2^{- \frac{n}{1 - \xi}}$, where $Q_\xi =  H^{-1} \left( D_0 / 4 \right)^{\frac{1}{1-\xi}}$, and so
\[\begin{split}
& \mathds{1}_{\left\{ \frac{1}{2}D_02^{-(n+1)} < \rho_{\lam_0}(x', y') \leq 2D_0 2^{-n} \right\}} \times \mathds{1}_{\left\{ |\Pi_\lam(x') - \Pi_\lam(y')| \leq 2D2^{- \frac{n+m}{1 + \xi}} \right\}} \\
& \qquad \leq \mathds{1}_{\left\{ \frac{1}{2}D_02^{-(n+1)} < \rho_{\lam_0}(x', y') \leq 2D_0 2^{-n} \right\}} \times \mathds{1}_{\left\{ |\Pi_\lam(x') - \Pi_\lam(y')| \leq C \rho_\lam(x', y') 2^{-\frac{m}{1+\xi} + n\left(\frac{1}{1-\xi} - \frac{1}{1+\xi}\right)},\  \rho_\lam(x', y') > Q_\xi2^{- \frac{n}{1 - \xi}} \right\}}
\end{split}\]
for a constant $C = C(\lam_0, \xi)$. Combing the last two bounds, which are uniform in $\mu \in V$ and $\lam \in U'$, we obtain
\[
\begin{split} \sup \limits_{\mu \in V}\ \Ek_s (\Pi_\lam\tilde{\mu}) & \lesssim \sum \limits_{m + n > N}  2^{s(m+\frac{n}{1+\xi}) + 10 \eps(n+m)} \sum \limits_{x' \in F_{n+m}} \sum \limits_{y' \in F_{n+m}} \nu\left( B_{\lam_0}\left(x', \frac{1}{8}D_02^{-{(n+m)}}\right) \right) \\
	& \qquad \times \nu\left( B_{\lam_0}\left(y', \frac{1}{8}D_0 2^{-{(n+m)}}\right) \right) \times \mathds{1}_{\left\{ \frac{1}{2}D_02^{-(n+1)} < \rho_{\lam_0}(x', y') \leq 2D_0 2^{-n} \right\}}  \\
	& \qquad\times \mathds{1}_{\left\{ |\Pi_\lam(x') - \Pi_\lam(y')| \leq C \rho_\lam(x', y') 2^{-\frac{m}{1+\xi} + n\left(\frac{1}{1-\xi} - \frac{1}{1+\xi}\right)},\  \rho_\lam(x', y') > Q_\xi2^{- \frac{n}{1 - \xi}} \right\}}.
\end{split}
\]

Integrating with respect to $d\eta(\lam)$ and using the transversality condition \eqref{eq: trans ngbhd} yields

\begin{equation}\label{eq: integral bound sums}
\begin{split}
\int \limits_{U'} & \sup \limits_{\mu \in V}\ \Ek_s (\Pi_\lam\tilde{\mu})\ d\eta(\lam)  \lesssim \sum \limits_{m + n > N}  2^{s(m+\frac{n}{1+\xi}) + 10 \eps(n+m)} \\
& \qquad \times\sum \limits_{x' \in F_{n+m}} \sum \limits_{y' \in F_{n+m}} \nu\left( B_{\lam_0}\left(x', \frac{1}{8}D_02^{-{(n+m)}}\right) \right) \nu\left( B_{\lam_0}\left(y', \frac{1}{8}D_0 2^{-{(n+m)}}\right) \right) \\
& \qquad \times \mathds{1}_{\left\{ \frac{1}{2}D_02^{-(n+1)} < \rho_{\lam_0}(x', y') \leq 2D_0 2^{-n} \right\}} \\
& \qquad \eta \left( \left\{  \lam \in U' :|\Pi_\lam(x') - \Pi_\lam(y')| \leq C \rho_\lam(x', y') 2^{-\frac{m}{1+\xi} + n\left(\frac{1}{1-\xi} - \frac{1}{1+\xi}\right)},\  \rho_\lam(x', y') > Q_\xi2^{- \frac{n}{1 - \xi}} \right\} \right) \\
& \lesssim \sum \limits_{m + n > N}  2^{s(m+\frac{n}{1+\xi}) + 10 \eps(n+m) - \frac{m(d-\eps)}{1+\xi} + n\left((d-\eps)\left(\frac{1}{1-\xi} - \frac{1}{1+\xi} \right) + \frac{\eps}{1 - \xi}\right)}  \\
& \qquad \times \sum \limits_{x' \in F_{n+m}} \sum \limits_{y' \in F_{n+m}} \nu\left( B_{\lam_0}\left(x', \frac{1}{8}D_02^{-{(n+m)}}\right) \right) \nu\left( B_{\lam_0}\left(y', \frac{1}{8}D_0 2^{-{(n+m)}}\right) \right) \\
& \qquad \times \mathds{1}_{\left\{ \frac{1}{2}D_02^{-(n+1)} < \rho_{\lam_0}(x', y') \leq 2D_0 2^{-n} \right\}}.
\end{split}
\end{equation}
To deal with the sums over $x',y'$ we recall that $F_{n+m} \subset G$ and use the definition of $G$ to obtain 
\begin{equation}\label{eq: sum x'y'}
\begin{split}
\sum \limits_{x' \in F_{n+m}} & \sum \limits_{y' \in F_{n+m}} \nu\left( B_{\lam_0}\left(x', \frac{1}{8}D_02^{-{(n+m)}}\right) \right) \nu\left( B_{\lam_0}\left(y', \frac{1}{8}D_0 2^{-{(n+m)}}\right) \right) \\
& \qquad \times \mathds{1}_{\left\{ \frac{1}{2}D_02^{-(n+1)} < \rho_{\lam_0}(x', y') \leq 2D_0 2^{-n} \right\}} \\
& \lesssim 2^{6\eps(n+m)} \sum \limits_{x' \in F_{n+m}} \sum \limits_{y' \in F_{n+m}} \nu\left( B_{\lam_0}\left(x', \frac{1}{16}D_02^{-{(n+m)}}\right) \right) \nu\left( B_{\lam_0}\left(y', \frac{1}{16}D_0 2^{-{(n+m)}}\right) \right) \\
& \qquad \times \mathds{1}_{\left\{ \frac{1}{2}D_02^{-(n+1)} < \rho_{\lam_0}(x', y') \leq 2D_0 2^{-n} \right\}}. \\
\end{split}
\end{equation}
By \eqref{eq: Fn disjoint} and the definition of $G$ we have for $n + m > N$ (recall that $N$ was chosen so that $\frac{1}{16}D_0 2^{-N} < R/2$)
\begin{equation}\label{eq: sum of y bound}
\begin{split}\sum \limits_{y' \in F_{n+m}} & \nu\left( B_{\lam_0}\left(y', \frac{1}{16}D_0 2^{-{(n+m)}}\right) \right) \mathds{1}_{\left\{ \frac{1}{2}D_02^{-(n+1)} < \rho_{\lam_0}(x', y') \leq 2D_02^{-n} \right\}} \\
	& \leq \nu\left( B_{\lam_0}(x', 4D_0 2^{-n}) \right) \\
	&  \lesssim 2^{-n(q-2\eps)},
\end{split}
\end{equation}
where the last inequality holds for large enough $n$ by the definition of $G$, while for the remaining finitely many $n$'s it holds as $\nu(X) < \infty$. Applying \eqref{eq: sum of y bound} to \eqref{eq: sum x'y'} and invoking once more disjointness of the balls in \eqref{eq: Fn disjoint} gives
\[\begin{split}
\sum \limits_{x' \in F_{n+m}} & \sum \limits_{y' \in F_{n+m}} \nu\left( B_{\lam_0}\left(x', \frac{1}{8}D_02^{-{(n+m)}}\right) \right) \nu\left( B_{\lam_0}\left(y', \frac{1}{8}D_0 2^{-{(n+m)}}\right) \right) \\
& \qquad \times \mathds{1}_{\left\{ \frac{1}{2}D_02^{-(n+1)} < \rho_{\lam_0}(x', y') \leq 2D_0 2^{-n} \right\}} \\
& \lesssim \nu(X)2^{-n(q-2\eps) + 6\eps(n+m)}. \\
\end{split}\]
Combining this with \eqref{eq: integral bound sums} gives finally
\[ \int \limits_{U'}  \sup \limits_{\mu \in V}\ \Ek_s (\Pi_\lam\tilde{\mu})\ d\eta(\lam) \lesssim  \sum \limits_{n=0}^\infty \sum \limits_{m=0}^\infty 2^{m\left(s + 16\eps - \frac{d-\eps}{1 + \xi}\right)+ n\left( \frac{s}{1+\xi} + 18\eps + (d-\eps)\left(\frac{1}{1 - \xi} - \frac{1}{1+\xi}\right) + \frac{\eps}{1 - \xi} - q  \right)}.  \]
The last sum is finite provided that
\[ s < \frac{d - \eps}{1 + \xi} - 16\eps\ \text{ and }\ s < (1+\xi)\left( q - 18\eps - (d-\eps)\left(\frac{1}{1 - \xi} - \frac{1}{1+\xi} - \frac{\eps}{1 - \xi}\right) \right). \]
\end{proof}

\begin{proof}[{\bf Proof of point \eqref{it: theorem general hdim} of Theorem \ref{thm: general proj}}]
By taking a countable intersection over $\lam$, it suffices to prove that for every $L \geq 1$, the conclusion of the theorem holds for $\eta$-a.e. $\lam \in U_L = \{ \lam \in U : \Lip(\Pi_\lam, \rho_\lam) \leq L \}$.

It is well known that for a Borel measure $\mu$ on $\R^n$, if $\Ek_s(\mu) < \infty$, then $\lhdim \mu \geq s$, see \cite[Theorem 4.13]{FalconerFoundations}. Therefore, a consequence of Proposition \ref{prop: energy bound} is that for fixed $q, \eps > 0$ and $0 < \xi\leq 1$ there exists an open neighbourhood $U'$ of $\lam_0$ in $U_L$  such for every $\nu \in \Vk_{\lam_0}$ and every $R > 0$ we have that for $\eta$-a.e. $\lam \in U'$ inequality
\begin{equation}\label{eq: lhdim bound mu A} \lhdim \Pi_\lam ( \mu|_{A_{\lam_0, q, \eps, R}(\mu,\nu)} )  \geq \min \left\{  \frac{d - \eps}{1 + \xi} - 16\eps,\  (1+\xi)\left( q - 18\eps - (d-\eps)\left(\frac{1}{1 - \xi} - \frac{1}{1+\xi}\right) - \frac{\eps}{1 - \xi} \right)\right\}
\end{equation}
holds for every $\mu \in \underline{V}_{\lam_0, q, \eps}(\nu)$.

Point \eqref{it: theorem general hdim} of Theorem \ref{thm: general proj} follows from \eqref{eq: lhdim bound mu A} by invoking Lemma \ref{lem: V saturation}, letting $q,\eps,\xi, R \to 0$ and taking countable intersections over the parameter space. Below we explain this more precisely.

Letting $R \to 0$ in \eqref{eq: lhdim bound mu A} and taking countable intersection over $\lam$ we see by \eqref{eq: upper mu M limit} that for $\eta$-a.e. $\lam \in U'$ inequality
\begin{equation}\label{eq: lhdim bound mu} \lhdim \Pi_\lam  \mu  \geq \min \left\{  \frac{d - \eps}{1 + \xi} - 16\eps,\  (1+\xi)\left( q - 18\eps - (d-\eps)\left(\frac{1}{1 - \xi} - \frac{1}{1+\xi}\right) - \frac{\eps}{1 - \xi} \right)\right\}.
\end{equation}
holds for every $\mu \in \underline{V}_{\lam_0, q, \eps}(\nu)$. By \eqref{eq: lower V cover}, as $\Vk_{\lam_0}$ is countable, we have that for $\mu$-a.e. $\lam \in U'$, inequality \eqref{eq: lhdim bound mu} holds for every $\mu \in \Mk$ with $\lhdim( \mu, \rho_{\lam_0}) \geq q > 0$. We can assume by \ref{it: metric compability} that $U'$ is such that $H^{-1} \rho_\lam(x,y)^{1+\xi} \leq \rho_{\lam_0}(x,y) \leq H \rho_\lam(x,y)^{1 - \xi}$ holds for $\lam \in U$ and therefore $\lhdim( \mu, \rho_{\lam_0}) \geq \frac{\lhdim( \mu, \rho_{\lam})}{1 + \xi}$ for all $\lam \in U'$ and $\mu \in \Mk$ (since $B_{\lam_0}(x,r) \subset B_\lam(x, Hr^\frac{1}{1+\xi})$). We conclude that for $\eta$-a.e. $\lam \in U'$, inequality \eqref{eq: lhdim bound mu} holds for every $\mu \in \Mk$ satisfying $\lhdim( \mu, \rho_{\lam}) \geq (1+\xi)q > 0$. As $U_L$ is Lindel\"of (since $U$ is hereditary Lindel\"of) we can find a countable cover $\{ U'_i \}_{i=1}^\infty$ of $U_L$ such that for $\eta$-a.e. $\lam \in U'_i$ inequality \eqref{eq: lhdim bound mu} holds for every $\mu \in \Mk$ with $\lhdim( \mu, \rho_{\lam}) \geq (1+\xi)q > 0$ and hence the same is true for $\eta$-a.e $\lam \in U_L$. Finally, taking countable intersections over $q, \eps, \xi \in \mathbb{Q} \cap (0,\infty)$ we conclude that for $\eta$-a.e. $\lam \in U_L$ we have
\[  \lhdim \Pi_\lam  \mu  \geq \min \left\{d, \lhdim \mu \right\} \text{ for every } \mu \in \Mk \text{ such that } \lhdim (\mu, \rho_\lam) > 0.\]
The proof of the Theorem for the lower Hausdorff dimension is finished upon noting that $\lhdim \Pi_\lam  \mu \leq \lhdim (\mu, \rho_\lam)$ as $\Pi_\lam$ is Lipschitz in $\rho_\lam$ (so in particular $\lhdim \Pi_\lam = 0$ if $\lhdim \mu = 0$) and $\lhdim \Pi_\lam \mu \leq d$ as $\Pi_\lam \mu$ is a measure on $\R^d$.

The case of the upper Hausdorff dimension can be treated in exactly the same way, with the use of  \eqref{eq: upper V cover} and \eqref{eq: lower mu M limit} instead of \eqref{eq: lower V cover} and \eqref{eq: upper mu M limit}, respectively.
\end{proof}

\subsection{Theorem \ref{thm: general proj} - Assouad dimension and H\"older regularity}

The remainder of this section is devoted to the proof of point \eqref{it: theorem general adim} of Theorem \ref{thm: general proj}. We begin with a lemma which combines the transversality assumption \ref{it: trans} with  covering bounds coming from the Assouad dimension.

\begin{lem}\label{lem: adim trans}
Fix $L > 0$ and set $U_L = \{ \lam \in U : \Lip(\Pi_\lam, \rho_\lam) \leq L \}$. For every $\lam_0 \in U_L,\ \theta>0$ there exists an open neighbourhood $U'$ of $\lam_0 \in U_L$ and a constant $D=D(\lam_0, L, \theta)$ such that for every $x\ \in X, 0 < r \leq \delta \leq 1$ inequality
\begin{equation}\label{it: adim trans}
\eta \left( \left\{ \lam \in U' : \underset{y \in X}{\exists}\ |\Pi_\lam(x) - \Pi_\lam(y)| \leq r,\ \rho_{\lam_0}(x,y) \geq \delta  \right\} \right) \leq D \left( \frac{r}{\delta} \right)^{d - \adim(X,\rho_{\lam_0}) - \theta}\delta^{-\theta}
\end{equation}
holds provided that $\adim(X, \rho_{\lam_0}) < d$.
\end{lem}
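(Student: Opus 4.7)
My plan is to decompose the ``far'' set $\{y \in X : \rho_{\lam_0}(x,y) \geq \delta\}$ into dyadic annuli at scales $2^k\delta$, cover each annulus in $\rho_{\lam_0}$ by balls of a single radius $\alpha$ calibrated to $r$, and apply the transversality hypothesis \ref{it: trans} to each ball center via a union bound. The Assouad homogeneity of $X$ in $\rho_{\lam_0}$ controls the number of balls at each scale, and the summation over scales will be geometric.

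Let $s = \adim(X, \rho_{\lam_0})$; we may assume $s + \theta < d$, otherwise the claim is trivial for large $D$. Choose $s' \in (s, d)$ slightly larger than $s$, then $\xi, \eps > 0$ small, subject to (a) $s'(1-\xi) < d$, (b) $s' - s \geq d\xi/(1-\xi)$, and (c) $(s'-s) + \xi s' + \eps \leq \theta$. Shrinking $U'$ if necessary, I may assume on $U'$: hypothesis \ref{it: metric compability} with this $\xi$, hypothesis \ref{it: trans} with this $\eps$, $\Lip(\Pi_\lam, \rho_\lam) \leq L$, and $\eta(U') < \infty$ (the last by local finiteness of $\eta$). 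Let $M_{s'}$ witness $(M_{s'}, s')$-homogeneity of $X$ in $\rho_{\lam_0}$.

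Set $\alpha = c_0 r^{1+\xi}$ with $c_0 = c_0(L, H, \xi)$ chosen via \ref{it: metric compability} so that $\rho_{\lam_0}(y, y') \leq \alpha$ forces $L \rho_\lam(y, y') \leq r$; then $y \in B_{\lam_0}(y', \alpha)$ and $|\Pi_\lam(x) - \Pi_\lam(y)| \leq r$ force $|\Pi_\lam(x) - \Pi_\lam(y')| \leq 2r$. If $\alpha > \delta/4$, then $r/\delta$ exceeds a positive constant depending only on $L, H, \xi$, and the desired bound holds trivially upon taking $D \geq \eta(U') \cdot (\text{const})$. Otherwise $\alpha \leq \delta/4$, and for each $k \geq 0$ with $2^k\delta \leq \diam(X, \rho_{\lam_0})$, Lemma \ref{lem: packing cover} combined with Assouad homogeneity yields a cover of $A_k := \{y \in X : 2^k\delta \leq \rho_{\lam_0}(x,y) < 2^{k+1}\delta\}$ by $\lesssim (2^k\delta/\alpha)^{s'}$ balls $B_{\lam_0}(y_{k,i}, \alpha)$ with $y_{k,i} \in A_k$. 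Each center satisfies $\rho_{\lam_0}(x, y_{k,i}) \geq 2^k\delta/2$, hence $\rho_\lam(x, y_{k,i}) \geq \delta'_k := (2^k\delta/(2H))^{1/(1-\xi)}$ uniformly in $\lam \in U'$ by \ref{it: metric compability}. The event in question is contained in $\bigcup_{k,i} \{\lam \in U' : |\Pi_\lam(x) - \Pi_\lam(y_{k,i})| \leq 2r\}$, and since $\rho_\lam(x, y_{k,i}) \geq \delta'_k$ holds on all of $U'$, \ref{it: trans} bounds the $\eta$-measure of each term by $K (\delta'_k)^{-\eps}(2r/\delta'_k)^{d-\eps} \lesssim (\delta'_k)^{-d} r^{d-\eps}$.

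Summing the union bound produces a geometric series in $k$ with ratio $2^{s' - d/(1-\xi)} < 1$ (by (a)), dominated by its $k=0$ term, giving $E(x,r,\delta) \lesssim r^{d-\eps-(1+\xi)s'} \delta^{s' - d/(1-\xi)}$. Using $r \leq \delta \leq 1$, the desired inequality $r^{d-\eps-(1+\xi)s'}\delta^{s' - d/(1-\xi)} \leq (r/\delta)^{d-s-\theta}\delta^{-\theta}$ reduces to the two exponent inequalities $d - \eps - (1+\xi)s' \geq d - s - \theta$ and $s' - d/(1-\xi) \geq s - d$, which are exactly (c) and (b). The main obstacle is the bookkeeping across two metrics: Assouad homogeneity is stated in $\rho_{\lam_0}$ while transversality concerns $\rho_\lam$, so the H\"older distortion from \ref{it: metric compability} percolates through every exponent, and the parameters $s', \xi, \eps$ must be jointly tuned so that both the $r$- and the $\delta$-exponents match their targets.
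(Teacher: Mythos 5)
Your proof is correct and follows essentially the same strategy as the paper: dyadic annuli at scales $2^k\delta$, Assouad covering by small balls, metric compatibility \ref{it: metric compability} to transfer between $\rho_{\lam_0}$ and $\rho_\lam$, transversality \ref{it: trans} with the lower-bound normalization $\rho_\lam(x,y_{k,i})\ge \delta'_k$, and a geometric sum over scales. The only cosmetic difference is the covering radius ($\alpha = c_0 r^{1+\xi}$ versus the paper's $r$ in $\rho_{\lam_0}$), which shifts the $\xi$-losses between the covering count and the projection bound but leads to the same final exponents.
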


\begin{proof}
Fix $\xi > 0$ (we will specify it later in terms of $\theta$ and $\theta'$) and let $U' \subset U_L$ be a neighbourhood of $\lam_0$ such that $\eta(U') < \infty$ and for every $x, y \in X$ inequalities
\begin{equation}\label{eq: metric continuity 2} H^{-1} \rho_\lam(x,y)^{1+\xi} \leq \rho_{\lam_0}(x,y) \leq H \rho_\lam(x,y)^{1 - \xi}
\end{equation}
and
\begin{equation}\label{eq: trans ngbhd 2}
	\eta(\{ \lam \in U' : |\Pi_\lam (x) - \Pi_\lam (y)| < \rho_\lam (x,y) r \text{ and } \rho_\lam (x,y) \geq \delta \}) \leq K \delta^{-\xi} r^{d-\xi}  \text{ for $\delta, r > 0$}
\end{equation}
hold. Fix $x \in X$. Given $i \geq 0$ take a cover
\[ \left\{ y \in X : 2^i \delta \leq \rho_{\lam_0}(x,y) < 2^{i+1}\delta \right\} \subset \bigcup \limits_{m=1}^{N_i} B_{\lam_0}(y_{i,m}, r)\]
such that
\begin{equation}\label{eq: assouad bound} N_i \leq C_{\xi}\left( \frac{2^{i+1}\delta}{r} \right)^{\adim(X, \rho_{\lam_0}) + \xi}\ \text{ and }\ 2^i\delta \leq \rho_{\lam_0}(x,y_{i,m}) < 2^{i+1}\delta.
\end{equation}
It exists due to the definition of the Assouad dimension (note that $C_\xi$ does not depend on $x$). By \eqref{eq: metric continuity 2}  we have for $\lam \in U',\ i \geq 0,\ 1 \leq m \leq N_i$
\begin{equation}\label{eq: metrics bound}
B_{\lam_0}(y_{i,m}, r) \subset B_\lam(y_{i,m}, Hr^{\frac{1}{1+\xi}}) \text{ and }  H^{-1} 2^{\frac{i}{1-\xi}}\delta^{\frac{1}{1-\xi}} \leq \rho_{\lam}(x,y_{i,m}) < H2^{\frac{i+1}{1+\xi}}\delta^{\frac{1}{1+\xi}}.
\end{equation}
This gives
\begin{equation}\label{eq: eta r delta bound 1}
\begin{split}
	\eta & \left( \left\{ \lam \in U' : \underset{y \in X}{\exists}\ |\Pi_\lam(x) - \Pi_\lam(y)| \leq r,\ \rho_{\lam_0}(x,y) \geq \delta  \right\} \right) \\
	& \leq \sum \limits_{i=0}^\infty\eta  \left( \left\{ \lam \in U' : \underset{y \in X}{\exists}\ |\Pi_\lam(x) - \Pi_\lam(y)| \leq r,\ 2^{i}\delta \leq \rho_{\lam_0}(x,y) < 2^{i+1}\delta  \right\} \right) \\
	& \leq \sum \limits_{i=0}^\infty \sum \limits_{m=1}^{N_i} \eta \left( \left\{ \lam \in U' : \underset{y \in B_{\lam_0}(y_{i,m}, r)}{\exists}\ |\Pi_\lam(x) - \Pi_\lam(y)| \leq r  \right\} \right) \\
	& \leq  \sum \limits_{i=0}^\infty \sum \limits_{m=1}^{N_i} \eta \left( \left\{ \lam \in U' : \underset{y \in  B_\lam(y_{i,m}, Hr^{\frac{1}{1+\xi}}) }{\exists}\ |\Pi_\lam(x) - \Pi_\lam(y)| \leq r  \right\} \right) \\
\end{split}
\end{equation}
If $\lam \in U'$ and $y \in B_\lam(y_{i,m}, Hr^{\frac{1}{1+\xi}})$ is such that $|\Pi_\lam(x) - \Pi_\lam(y)| \leq r$, then $|\Pi_\lam(x) - \Pi_\lam(y_{i,m})| \leq |\Pi_\lam(x) - \Pi_\lam(y)| + |\Pi_\lam(y) - \Pi_\lam(y_{i,m})| \leq r + LHr^{\frac{1}{1+\xi}} \leq Mr^{\frac{1}{1+\xi}}$ for a constant $M = LH+1$. 
Using this together with \eqref{eq: metrics bound}, we can continue \eqref{eq: eta r delta bound 1} to obtain
\[\begin{split}
\eta & \left( \left\{ \lam \in U' : \underset{y \in X}{\exists}\ |\Pi_\lam(x) - \Pi_\lam(y)| \leq r,\ \rho_{\lam_0}(x,y) \geq \delta  \right\} \right) \\
& \leq  \sum \limits_{i=0}^\infty \sum \limits_{m=1}^{N_i} \eta \left( \left\{ \lam \in U' :  |\Pi_\lam(x) - \Pi_\lam(y_{i,m})| \leq Mr^{\frac{1}{1+\xi}},\ \rho_{\lam}(x,y_{i,m}) \geq  H^{-1}2^{\frac{i}{1-\xi}}\delta^{\frac{1}{1-\xi}}   \right\} \right) \\
& \leq  \sum \limits_{i=0}^\infty \sum \limits_{m=1}^{N_i} \eta \left( \left\{ \lam \in U' :  |\Pi_\lam(x) - \Pi_\lam(y_{i,m})| \leq \rho_{\lam}(x,y_{i,m}) MHr^{\frac{1}{1+\xi}} 2^{\frac{-i}{1-\xi}}\delta^{\frac{-1}{1-\xi}},\ \rho_{\lam}(x,y_{i,m}) \geq  H^{-1}2^{\frac{i}{1-\xi}}\delta^{\frac{1}{1-\xi}}   \right\} \right).
\end{split} \]
Applying \eqref{eq: trans ngbhd 2} and \eqref{eq: assouad bound} gives
\[\begin{split}
	\eta & \left( \left\{ \lam \in U' : \underset{y \in X}{\exists}\ |\Pi_\lam(x) - \Pi_\lam(y)| \leq r,\ \rho_{\lam_0}(x,y) \geq \delta  \right\} \right) \\
	& \leq  K  (MH)^{d - \xi}r^{\frac{d - \xi}{1+\xi}}H^\xi \delta^{\frac{-d}{1-\xi}} \sum \limits_{i=0}^\infty N_i   2^{\frac{-id}{1-\xi}} \\
	& \leq C r^{\frac{d - \xi}{1+\xi} - \adim(X, \rho_{\lam_0}) - \xi } \delta^{-\frac{d}{1-\xi} + \adim(X, \rho_{\lam_0}) + \xi } \sum \limits_{i=0}^\infty 2^{i\left(\adim(X, \rho_{\lam_0}) + \xi - \frac{d}{1-\xi}\right)},
\end{split} \]
for some constant $C= C(\lam_0, L, \xi)$. If now $\xi > 0$ was chosen small enough to guarantee $\adim(X, \rho_{\lam_0}) + \xi - \frac{d}{1-\xi} < 0$ (recall that we consider only the case $\adim(X, \rho_{\lam_0}) < d$), then the above sum converges, giving
\[\begin{split}
	\eta & \left( \left\{ \lam \in U' : \underset{y \in X}{\exists}\ |\Pi_\lam(x) - \Pi_\lam(y)| \leq r,\ \rho_{\lam_0}(x,y) \geq \delta  \right\} \right) \\
	& \leq D \left( \frac{r}{\delta} \right)^{\frac{d - \xi}{1+\xi} - \adim(X, \rho_{\lam_0}) -\xi } \delta^{-\left(\frac{d}{1-\xi} - \frac{d-\xi}{1+\xi} \right) },
\end{split} \]
for some constant $D = D(\lam_0, L, \xi)$. Finally, if $\xi>0$ was chosen small enough to satisfy $\frac{d - \xi}{1+\xi} - \adim(X, \rho_{\lam_0}) -  \xi \geq  d -\adim(X, \rho_{\lam_0}) - \theta$ and $\frac{d}{1-\xi} - \frac{d-\xi}{1+\xi} \leq \theta$, then \eqref{it: adim trans} holds.
\end{proof}

\begin{proof}[{\bf Proof of point \eqref{it: theorem general adim} of Theorem \ref{thm: general proj}}]
Fix $\alpha \in (0,1),\ L \geq 1$ and set 
\[ U_L = \{ \lam \in U : \Lip(\Pi_\lam, \rho_\lam) \leq L\ \text{ and } \adim(X, \rho_\lam) < d \}.\]
As $U$ is hereditary Lindel\"of, by taking countable intersections in the parameter space $U$ it suffices to prove that every $\lam_0 \in U_L$ has an open neighbourhood $U'$ in $U_L$ such that for $\eta$-a.e. $\lam \in U'$, every $\mu \in \Mk$ has the property that for $\mu$-a.e. $x \in X$ there exists $C$ such that
\begin{equation}\label{eq: holder ineq}  \rho_\lam(x,y) \leq C  |\Pi_\lam(x) - \Pi_\lam(y) |^\alpha \text{ for every } y \in X.
\end{equation}
For that, by Lemma \ref{lem: V saturation} with $q=0$ (and again taking countable intersections over $\lam$) it suffices to show that for $\eps>0$ small enough one has for every $\nu \in \Vk_{\lam_0}$ and $R>0$
\begin{equation}\label{eq: holder goal}
\lim \limits_{C \to \infty}\ \int \limits_{U'} \sup \limits_{\mu \in  \underline{V}_{\lam_0, 0, \eps}(\nu)} \mu|_{A_{\lam_0, 0, \eps, R}(\mu, \nu)} \left( E_{C,\lam} \right) d\eta(\lam) = 0,
\end{equation}
where
\[ E_{C,\lam} =  \left\{ x \in X : \underset{y \in X}{\exists}\ \rho_\lam(x,y) > C|\Pi_\lam(x) - \Pi_\lam(y)|^\alpha \right\}. \]
Indeed, if \eqref{eq: holder goal} holds, then (note that $\mu|_{A_{\lam_0, 0, \eps, R}(\mu, \nu)} \left( E_{C,\lam} \right)$ is decreasing in $C$)
\[ \lim \limits_{C \to \infty}\ \sup \limits_{\mu \in  \underline{V}_{\lam_0, 0, \eps}(\nu)} \mu|_{A_{\lam_0, 0, \eps, R}(\mu, \nu)} \left( E_{C,\lam} \right) = 0 \text{ for } \eta\text{-a.e. } \lam \in U'  \]
and hence
\[ \text{ for } \eta\text{-a.e. } \lam \in U' ,\ \lim \limits_{C \to \infty} \mu|_{A_{\lam_0, 0, \eps, R}(\mu, \nu)} \left( E_{C,\lam} \right) = 0 \text{ for every } \mu \in  \underline{V}_{\lam_0, 0, \eps}(\nu).\]
This shows that for $\eta$-a.e. $\lam \in U'$ and every $\mu \in  \underline{V}_{\lam_0, 0, \eps}(\nu)$, for $\mu$-a.e. every $x \in A_{\lam_0, 0, \eps, R}(\mu, \nu)$  there exists $C$ such that \eqref{eq: holder ineq} holds. By \eqref{eq: upper mu M limit} the above holds then for $\mu$-a.e. $x \in X$ and by \eqref{eq: lower V cover} we have $\Mk \subset \bigcup \limits_{\nu \in \Vk_{\lam_0}} \underline{V}_{\lam_0, 0, \eps}(\nu)$ with the sum being countable, so we can extend it further to every $\mu \in \Mk$.

We shall now prove that for fixed $\lam_0 \in U_L$ there exists a neighbourhood $U'$ of $\lam_0$ in $U_L$ such that \eqref{eq: holder goal} holds provided that $\eps>0$ is small enough. For simplicity denote $V = \underline{V}_{\lam_0, 0, \eps}(\nu),\ A_\mu = A_{\lam_0, 0, \eps, R}(\mu,\nu),\ G = G_{\lam_0, 0, \eps, R}(\nu)$ and $\tilde{\mu} = \mu|_{A_\mu}$.  For $\xi>0$ (we will specify later how small $\xi$ has to be) let $U'$ be a neighbourhood of $\lam_0$ such that for $\lam \in U'$
\begin{equation}\label{eq: metric continuity 3} H^{-1} \rho_\lam(x,y)^{1+\xi} \leq \rho_{\lam_0}(x,y) \leq H \rho_\lam(x,y)^{1 - \xi} \text{ for every } x,y \in X.
\end{equation}
Set $D_0 = \diam(X, \rho_{\lam_0})$ and $D := LH D_0^{\frac{1}{1+\xi}}$, so that $\sup \limits_{\lam \in U'}\ \sup \limits_{x,y \in X} |\Pi_\lam(x) - \Pi_\lam(y)| \leq D$. By Lemma \ref{lem: packing cover}, for $i \geq 0$ take a finite cover
\begin{equation}\label{eq: G cover adim} G \subset \bigcup \limits_{x' \in F_i} B_{\lam_0}(x', D_0 2^{-i})
\end{equation}
such that
\begin{equation}\label{eq: Fi properties} F_i \subset G \text{ and } \left\{ B_{\lam_0}(x', \frac{1}{2}D_0 2^{-i}) : x' \in F_i \right\} \text{ consists of pairwise disjoint balls}.
\end{equation}
Let
\[ R_{C, i}= \left\{ (x,\lam) \in G \times U' :  \underset{y \in X}{\exists}\ D2^{-(i+1)} < |\Pi_\lam(x) - \Pi_\lam(y)| \leq D2^{-i},\ \rho_\lam(x,y) > C|\Pi_\lam(x) - \Pi_\lam(y)|^\alpha\right\}.  \]
Note that if $x \in B_{\lam_0}(x', D_02^{-i})$ and $(x,\lam) \in R_{C,i}$, then there exists $y \in X$ such that 
\[|\Pi_\lam(x') - \Pi_\lam(y)| \leq |\Pi_\lam(x') - \Pi_\lam(x)| +  |\Pi_\lam(x) - \Pi_\lam(y)| \leq LH D_0^{\frac{1}{1+\xi}}2^{-\frac{i}{1+\xi}} +  D2^{-i} \leq 2D2^{-\frac{i}{1+\xi}}\] 
and
\[\rho_{\lam_0}(x', y) \geq \rho_{\lam_0}(x, y)  - \rho_{\lam_0}(x, x')  \geq H^{-1}C^{1+\xi}D^{1+\xi}2^{-\alpha(1+\xi)(i+1)} - D_02^{-i} \geq aC2^{-\alpha(1+\xi) i}\]
for some constant $a = a(\lam_0, L, \xi)>0$ provided that $C$ is large enough and $\xi>0$ is small enough to guarantee $\alpha(1+\xi) < 1$. Therefore, for such $C$ and $\xi$ we have by \eqref{eq: G cover adim}
\[ R_{C,i} \subset \bigcup \limits_{x' \in F_i} B_{\lam_0}(x', D_02^{-i}) \times Q_{C,i}(x'), \]
where
\[ Q_{C,i}(x') = \{ \lam \in U' : |\Pi_\lam(x') - \Pi_\lam(y)| \leq 2D2^{-\frac{i}{1+\xi}},\ \rho_{\lam_0}(x', y) \geq  aC2^{-\alpha (1+\xi) i}  \}.\]
This gives
\begin{equation}\label{eq: EC sum bound}
\begin{split}\int \limits_{U'} \sup \limits_{\mu \in  V} \tilde{\mu}\left( E_{C,\lam} \right) d\eta(\lam) &\leq \sum \limits_{i=0}^\infty \int \limits_{U'} \sup \limits_{\mu \in  V} \tilde{\mu}\left( \left\{ x \in G : (x,\lam) \in R_{C,i} \right\} \right) d\eta(\lam) \\
& \leq  \sum \limits_{i=0}^\infty \int \limits_{U'} \sup \limits_{\mu \in  V} \sum \limits_{x' \in F_i} \tilde{\mu}(B_{\lam_0}(x', D_02^{-i}))\mathds{1}_{Q_{C,i}(x')}(\lam)d\eta(\lam).
\end{split}
\end{equation}
Below we use the following notation: $A \lesssim B$ means that there exists a constant $C = C(\lam_0, \eps, \xi, R, L, \nu)$ such that $A \leq C B$. Let now $N$ be such that $\frac{1}{2}D_02^{-N} < R/2$ and note that applying \eqref{eq: full set measure bound} gives
\begin{equation}\label{eq: holder bound finite sum}
\begin{split}\lim \limits_{C \to \infty}&\ \sum \limits_{i=0}^N \int \limits_{U'} \sup \limits_{\mu \in  V} \sum \limits_{x' \in F_i} \tilde{\mu}(B_{\lam_0}(x', D_02^{-i}))\mathds{1}_{Q_{C,i}(x')}(\lam)d\eta(\lam) \\
& \leq \lim \limits_{C \to \infty}\ \sum \limits_{i=0}^N \int \limits_{U'} \sup \limits_{\mu \in  V} \sum \limits_{x' \in F_i} \tilde{\mu}(B_{\lam_0}(x', D_02^{-i}))\mathds{1}_{Q_{C,i}(x')}(\lam)d\eta(\lam) \\
& \leq \lim \limits_{C \to \infty}\ M\nu(X) \sum \limits_{i=0}^N \sum \limits_{x' \in F_i} \eta(Q_{C,i}(x'))\\
& = 0,
\end{split}
\end{equation}
since for each $i \in \N$, one has $Q_{C,i}(x') = \emptyset$ for $C$ large enough (as $\diam(X, \rho_{\lam_0}) < \infty$). Therefore, by \eqref{eq: EC sum bound}, in order to prove \eqref{eq: holder goal} it suffices to show
\begin{equation}\label{eq: holder goal 2}
\lim \limits_{C \to \infty}\ \sum \limits_{i=N}^\infty\ \int \limits_{U'} \sup \limits_{\mu \in  V} \sum \limits_{x' \in F_i} \tilde{\mu}(B_{\lam_0}(x', D_02^{-i}))\mathds{1}_{Q_{C,i}(x')}(\lam)d\eta(\lam) = 0.
\end{equation}
As $F_i \subset G$, we can invoke \eqref{eq: mu nu comparison} and recall the definition of $G$ to obtain (we use here  $\frac{1}{2}D_02^{-N} < R/2$)
\[
\begin{split}\sum \limits_{i=N}^\infty\ & \int \limits_{U'} \sup \limits_{\mu \in  V} \sum \limits_{x' \in F_i} \tilde{\mu}(B_{\lam_0}(x', D_02^{-i}))\mathds{1}_{Q_{C,i}(x')}(\lam)d\eta(\lam)\\
	 & \lesssim \sum \limits_{i=N}^\infty 2^{5 \eps i} \int \limits_{U'} \sum \limits_{x' \in F_i} \nu(B_{\lam_0}(x', D_02^{-i}))\mathds{1}_{Q_{C,i}(x')}(\lam)d\eta(\lam) \\
& \lesssim \sum \limits_{i=N}^\infty 2^{8 \eps i} \int \limits_{U'} \sum \limits_{x' \in F_i} \nu(B_{\lam_0}(x', \frac{1}{2}D_02^{-i}))\mathds{1}_{Q_{C,i}(x')}(\lam)d\eta(\lam) \\
& = \sum \limits_{i=N}^\infty 2^{8 \eps i} \sum \limits_{x' \in F_i} \nu(B_{\lam_0}(x', \frac{1}{2}D_02^{-i}))\eta(Q_{C,i}(x')).
\end{split}
 \]
 Applying Lemma \ref{lem: adim trans} with $\theta = \theta(\lam_0, \eps, \xi, \alpha)$ to be specified later gives
 \[
 \begin{split} \sum \limits_{i=N}^\infty\ & \int \limits_{U'} \sup \limits_{\mu \in  V} \sum \limits_{x' \in F_i} \tilde{\mu}(B_{\lam_0}(x', D_02^{-i}))\mathds{1}_{Q_{C,i}(x')}(\lam)d\eta(\lam)\\
 & \lesssim C^{\adim(X, \rho_{\lam_0}) - d} \sum \limits_{i=0}^\infty 2^{8 \eps i} \sum \limits_{x' \in F_i} \nu(B_{\lam_0}(x', \frac{1}{2}D_02^{-i})) 2^{i(\alpha (1+\xi) - \frac{1}{1+\xi})(d - \adim(X, \rho_{\lam_0}) - \theta)}2^{\alpha (1+\xi)\theta i} \\
 & = C^{\adim(X, \rho_{\lam_0}) - d} \sum \limits_{i=0}^\infty 2^{i\left(8\eps + (\alpha (1+\xi) - \frac{1}{1+\xi})(d - \adim(X, \rho_{\lam_0}) - \theta) + \alpha\theta(1+\xi) \right)} \sum \limits_{x' \in F_i} \nu(B_{\lam_0}(x', \frac{1}{2}D_02^{-i})) \\
 & \lesssim C^{\adim(X, \rho_{\lam_0}) - d} \sum \limits_{i=0}^\infty 2^{i\left(8\eps + (\alpha (1+\xi) - \frac{1}{1+\xi})(d - \adim(X, \rho_{\lam_0}) - \theta) + \alpha\theta(1+\xi) \right)},
 \end{split}
 \]
 where the last step uses the disjointness in \eqref{eq: Fi properties}. Since $\alpha < 1$ and $\adim(X,\rho_{\lam_0}) < d$, the last sum converges provided that $\xi, \theta, \eps>0$ are chosen small enough. This choice establishes a neighbourhood $U'$ of $\lam_0$ such that
 \[\sum \limits_{i=N}^\infty\  \int \limits_{U'} \sup \limits_{\mu \in  V} \sum \limits_{x' \in F_i} \tilde{\mu}(B_{\lam_0}(x', D_02^{-i}))\mathds{1}_{Q_{C,i}(x')}(\lam)d\eta(\lam) \lesssim  C^{\adim(X, \rho_{\lam_0}) - d}\]
 and hence \eqref{eq: holder goal 2} holds as $\adim(X, \rho_{\lam_0}) < d$. Together with \eqref{eq: holder bound finite sum}, this establishes \eqref{eq: holder goal} and concludes the proof.
\end{proof}

\section{Iterated function systems and measures on symbolic spaces}

In this section we develop tools needed for applying Theorem \ref{thm: general proj} in the setting of symbolic dynamics related to IFS. In particular we prove Propositions \ref{prop: rel dim sep IFS} and \ref{prop: ede holder}.

\subsection{Relative dimension separability in symbolic spaces}

The goal of this subsection is to prove Proposition \ref{prop: rel dim sep IFS}. The proof is based on approximating ergodic measures with Markov measures in relative entropy, a technique well-known in information theory. Let us introduce some notation and recall useful results. Most of this exposition follows \cite{GrayEntropyBook}, but note that we use a different notation.

Let $\Ak$ be a finite set and let $\Sigma = \Ak^\N$ be the corresponding symbolic space.  We endow $\Sigma$ with the product topology. We will denote by $\Mk_{\sigma}(\Sigma)$ the set of all shift-invariant Borel probability measures on $\Sigma$ and by  $\Ek_\sig(\Sigma) \subset \Mk_{\sigma}(\Sigma)$ the set of all ergodic measures. For $\mu \in \MM_{\sigma}(\Sigma)$, we will denote by $\mu|_n$ the distribution of $\mu$ on words of length $n$, i.e. $\mu|_n \in \MM(\Ak^n)$ is given by $\mu|_n(\{\om\}) = \mu([\om])$ for $\om \in \Ak^n$.

\begin{defn}
	Measure $\nu \in \MM_\sigma(\Sigma)$ is called a \textbf{$k$-step Markov measure} for $k \in \N$ if for every $n \geq k$ and every $\om = (\om_1, \ldots, \om_n) \in \mA^n$ it satisfies
	\begin{equation}\label{eq:k-step Markov def} \nu( [\om_1, \ldots, \om_n] ) = \nu([\om_1, \ldots, \om_k]) \prod \limits_{j = 1}^{n-k} P_\nu(\om_{j + k} | \om_j, \ldots, \om_{j+k- 1} ),
	\end{equation}
	where $P_\nu$ is the transition kernel given by
	\[ P_\nu(\tau | \tau_1, \ldots, \tau_k) = \begin{cases} \frac{\nu([\tau_1, \ldots, \tau_k, \tau])}{\nu([\tau_1, \ldots, \tau_k])} & \text{ if } \nu([\tau_1, \ldots, \tau_k]) > 0 \\
		0 & \text{ if } \nu([\tau_1, \ldots, \tau_k]) = 0 \end{cases}\ \ \text{ for } \tau, \tau_1, \ldots \tau_k \in \mA. \]
\end{defn}
Note that a $k$-step Markov measure is uniquely determined by its stationary distribution $\nu|_k$ and its transition kernel $P_\nu$ (so it is in fact uniquely determined by its $k+1$-dimensional distribution $\nu|_{k+1}$). Entropy of a $k$-step Markov measure is given by the following formula (see e.g. \cite[Lemma 3.16]{GrayEntropyBook}):

\begin{equation}\label{eq:markov entropy}
	h(\nu) = - \sum \limits_{\om \in \mA^{k+1}}\nu([\om]) \log P_{\nu}(\om_{k+1} | \om_1, \ldots, \om_k).
\end{equation}
There is also the following criterion for ergodicity of a $k$-step Markov measure:
\begin{lem}\label{lem:markov ergodic}
	A $k$-step Markov measure $\nu$ is ergodic if and only if for every $\om, \tau \in \Ak^k$ with $\nu([\om])>0, \nu([\tau])>0$, there exists $u \in \Sigma_*$ with $\nu([u])>0$ such that $\om$ is a prefix of $u$ and $\tau$ is a suffix of $u$.
\end{lem}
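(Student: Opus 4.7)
The plan is to reduce the $k$-step case to the $1$-step case via the standard sliding $k$-block recoding, and then to prove the $1$-step case directly: the forward direction by extracting a nontrivial shift-invariant set from the failure of connectivity, and the backward direction by combining the strong law of large numbers for finite irreducible Markov chains with Birkhoff's ergodic theorem.

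For the reduction, I would consider the map $\Psi : \Sigma \to (\Ak^k)^{\N}$ defined by $\Psi(x)_n = (x_n, x_{n+1}, \ldots, x_{n+k-1})$. It intertwines the shift on $\Sigma$ with the shift on $(\Ak^k)^{\N}$, and by \eqref{eq:k-step Markov def} the pushforward $\tilde{\nu} := \Psi_* \nu$ is a $1$-step Markov measure on $(\Ak^k)^{\N}$, supported on compatibly overlapping sequences. Ergodicity is preserved under this measure-preserving topological conjugacy, and positive-$\nu$-measure words $u \in \Sigma_*$ with prefix $\om$ and suffix $\tau$ correspond bijectively to positive-probability paths from state $\om$ to state $\tau$ in the recoded $1$-step chain. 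Hence it suffices to establish the lemma for $k=1$.

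Assuming $k=1$, for the forward direction suppose $\nu$ is ergodic and, toward contradiction, that there exist $\om, \tau \in \Ak$ with $\nu([\om]), \nu([\tau]) > 0$ but no word $u \in \Sigma_*$ of positive $\nu$-measure having $\om$ as prefix and $\tau$ as suffix. Define the set of predecessors of $\tau$ by
\[ C := \{\sigma \in \Ak : \exists\ u \in \Sigma_*,\ \nu([u]) > 0,\ \sigma \text{ is a prefix of } u,\ \tau \text{ is a suffix of } u\}, \]
so $\tau \in C$ and $\om \notin C$, and set $A := \{x \in \Sigma : x_1 \in C\}$. For $\nu$-a.e.\ $x \in A^c$ one has $x_1 \notin C$ and $\nu([x_1 x_2]) > 0$; if $x_2$ were in $C$, concatenating a positive-measure path from $x_2$ to $\tau$ after $x_1$ would, via \eqref{eq:k-step Markov def}, yield a positive-measure word from $x_1$ to $\tau$, contradicting $x_1 \notin C$. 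Hence $\sigma^{-1}(A) \subseteq A$ modulo $\nu$, and shift-invariance $\nu(\sigma^{-1}A) = \nu(A)$ upgrades this to $\sigma^{-1}A = A$ modulo $\nu$. Since $\nu(A) \geq \nu([\tau]) > 0$ and $\nu(A^c) \geq \nu([\om]) > 0$, this contradicts ergodicity.

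For the backward direction, the condition asserts that the Markov chain on its support is irreducible. The strong law of large numbers for finite irreducible Markov chains then gives, for every cylinder $[u] \subseteq \Sigma$, that $\frac{1}{N}\sum_{j=0}^{N-1} \mathds{1}_{[u]}(\sigma^j x) \to \nu([u])$ for $\nu$-a.e.\ $x$. Birkhoff's pointwise ergodic theorem identifies this limit with the conditional expectation $\E_\nu[\mathds{1}_{[u]} \mid \mathcal{I}]$ relative to the invariant $\sigma$-algebra $\mathcal{I}$, which must therefore be $\nu$-a.s.\ constant on every cylinder; since cylinders generate the Borel $\sigma$-algebra, $\mathcal{I}$ is trivial mod $\nu$, so $\nu$ is ergodic. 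The main technical point is the forward direction, where the Markov product formula \eqref{eq:k-step Markov def} is used crucially to verify that $C$ is closed under the forward dynamics.
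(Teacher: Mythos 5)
Your proof is correct, and the $k$-to-$1$ reduction via the sliding block code is the same device the paper uses. Where you depart from the paper is the $1$-step case: the paper simply cites Walters (Theorem 1.13), with a note that one must restrict to states of positive stationary measure, whereas you give a self-contained argument. Your forward direction is a clean and elementary construction (the predecessor set $C$ of $\tau$, the verification via the Markov product formula that $\sigma^{-1}A\subseteq A$ mod $\nu$, and the upgrade to equality via measure preservation); this handles non-full-support transition matrices automatically, avoiding the caveat the paper has to add about Walters' positivity hypothesis. The one place you should be careful is the backward direction: the strong law of large numbers for finite irreducible Markov chains is itself a nontrivial input, and in several standard references it is \emph{derived from} ergodicity of the Markov shift, which would make your argument circular. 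It is not circular in principle --- the SLLN can be obtained directly via a regeneration/renewal decomposition into i.i.d.\ excursions between visits to a fixed recurrent state, and the extension to block frequencies needed for general cylinders $[u]$ follows because the $m$-block chain is again irreducible --- but you should either cite a renewal-theoretic proof or sketch that route explicitly, so as not to presuppose what you are proving. An alternative that avoids the SLLN altogether is to run the $A$-set argument of your forward direction in reverse: given irreducibility, any shift-invariant set $A$ can be shown, via the Markov property and a stopping-time argument at first visit to a distinguished state, to have measure $0$ or $1$; this keeps the whole lemma elementary and self-contained.
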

\begin{proof}
	For $1$-step Markov measures this can be found e.g. in \cite[Theorem 1.13]{WaltersErgodicBook}. Statement for a $k$-step Markov measure follows by noting that it is isomorphic to a $1$-step Markov measure over alphabet $\mA^k$. Also, statement in \cite{WaltersErgodicBook} requires the stationary distribution $\nu|_k$ to be strictly positive, hence in general case one has to consider only the states with positive measure.
\end{proof}

\begin{defn}
	Let $\mu, \nu \in \MM_\sig(\Sigma)$ be such that $\mu|_n \ll \nu|_n$ for every $n \in \N$. The \textbf{relative entropy} of $\mu$ with respect to $\nu$ is defined as
	\[ h(\mu||\nu) = \lim \limits_{n \to \infty} \frac{1}{n} \sum \limits_{\om \in \mA^n} \mu([\om]) \log \frac{\mu([\om])}{\nu([\om])}, \]
	whenever the limit exists (we use the standard convention $0 \log \frac{0}{0} = 0 \log 0 = 0$).
\end{defn}
Whenever the relative entropy exists, it satisfies $h(\mu||\nu) \geq 0$, see \cite[Lemma 3.1]{GrayEntropyBook}. It may fail to exists for general shift-invariant measures, but it is guaranteed to exists if the reference measure $\nu$ is a $k$-step Markov measure:

\begin{lem}[{\cite[Lemma 3.10]{GrayEntropyBook}}]\label{lem:relative entropy markov formula}
	Let $\mu, \nu \in \MM_\sig(\Sigma)$ be such that $\mu|_n \ll \nu|_n$ for every $n \in \N$ and assume that $\nu$ is a $k$-step Markov measure. Then $h(\mu||\nu)$ is well defined and satisfies
	\[ h(\mu||\nu) = - h(\mu) - \sum \limits_{\om \in \mA^{k+1}} \mu([\om]) \log P_\nu(\om_{k+1}|\om_1, \ldots, \om_k). \]
\end{lem}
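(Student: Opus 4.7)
The plan is to exploit the multiplicative structure of $\nu$ granted by the $k$-step Markov property, which converts $\log \nu([\om])$ into a sum of $\log$-transition-weights, and then to use shift-invariance of $\mu$ to collapse this sum down to a single expectation over $k+1$-blocks.

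First, I would fix $n > k$ and apply the defining identity \eqref{eq:k-step Markov def} to write, for every word $\om\in\Ak^n$ with $\mu([\om])>0$ (so in particular $\nu([\om])>0$ by absolute continuity),
\[
\log\nu([\om]) \;=\; \log\nu([\om_1,\dots,\om_k]) \;+\; \sum_{j=1}^{n-k}\log P_\nu(\om_{j+k}\mid \om_j,\dots,\om_{j+k-1}).
\]
Multiplying by $\mu([\om])$, summing over $\om\in\Ak^n$, and dividing by $n$, the first term on the right contributes at most $\tfrac{1}{n}\max_{u\in\Ak^k}|\log\nu([u])|$ (summed over cylinders of length $k$, which gives a bounded quantity), and hence vanishes as $n\to\infty$. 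The point where $\nu([u])=0$ can be excluded because $\mu([u])=0$ there.

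For each fixed $j\in\{1,\dots,n-k\}$, shift-invariance of $\mu$ gives
\[
\sum_{\om\in\Ak^n}\mu([\om])\log P_\nu(\om_{j+k}\mid \om_j,\dots,\om_{j+k-1}) \;=\; \sum_{\tau\in\Ak^{k+1}}\mu([\tau])\log P_\nu(\tau_{k+1}\mid \tau_1,\dots,\tau_k),
\]
i.e.\ this quantity does not depend on $j$. Therefore the $j$-sum contributes exactly $(n-k)$ copies of the same constant $S:=\sum_{\tau\in\Ak^{k+1}}\mu([\tau])\log P_\nu(\tau_{k+1}\mid \tau_1,\dots,\tau_k)$, and
\[
\lim_{n\to\infty}\frac{1}{n}\sum_{\om\in\Ak^n}\mu([\om])\log\nu([\om]) \;=\; S.
\]
Combining this with the defining equality $\lim_{n\to\infty}\tfrac{1}{n}\sum_\om \mu([\om])\log\mu([\om])=-h(\mu)$ (which is the Shannon--McMillan--Breiman identity for the entropy of a shift-invariant measure, and in particular guarantees that the $\limsup$ and $\liminf$ in the definition of $h(\mu\|\nu)$ coincide), we obtain
\[
h(\mu\|\nu)\;=\;-h(\mu)\;-\;\sum_{\tau\in\Ak^{k+1}}\mu([\tau])\log P_\nu(\tau_{k+1}\mid \tau_1,\dots,\tau_k),
\]
which is the claimed formula.

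The only subtle step is justifying that the finite sum $S$ is well defined, i.e.\ that we do not encounter $\log 0$ on a set of positive $\mu$-mass. This follows from the hypothesis $\mu|_{k+1}\ll\nu|_{k+1}$: if $\mu([\tau])>0$ then $\nu([\tau])>0$, and shift-invariance of $\nu$ (together with $\nu([\tau_1,\dots,\tau_{k+1}])\le\nu([\tau_1,\dots,\tau_k])$) forces $\nu([\tau_1,\dots,\tau_k])>0$, so the conditional probability appearing in the $\log$ is strictly positive. No real obstacle appears; the argument is essentially bookkeeping organised around shift-invariance, and the main thing to keep track of is that boundary effects (the length-$k$ prefix) wash out in the $\tfrac{1}{n}$ average.
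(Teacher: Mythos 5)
The paper does not present its own proof of this lemma; it is cited directly from Gray's textbook \cite{GrayEntropyBook}, so there is no in-paper argument to compare against. Your proof is the standard one and is correct: you expand $\log\nu([\om])$ via the $k$-step Markov factorization \eqref{eq:k-step Markov def}, show the $1/n$-average of the length-$k$ boundary term vanishes, and use shift-invariance of $\mu$ to identify each of the $n-k$ transition summands with the fixed quantity $S$, so that $\lim_n n^{-1}\sum_{\om}\mu([\om])\log\nu([\om])=S$; subtracting from $\lim_n n^{-1}\sum_{\om}\mu([\om])\log\mu([\om])=-h(\mu)$ gives the formula, and your handling of the $0\log 0$ convention and of the positivity of $P_\nu$ on the $\mu$-support is careful and complete.

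One small attribution slip: the identity $\lim_n n^{-1}\sum_{\om}\mu([\om])\log\mu([\om])=-h(\mu)$ is not the Shannon--McMillan--Breiman theorem (which is the pointwise a.e.\ statement about $-n^{-1}\log\mu([\om|_n])$). What you actually need is the elementary Kolmogorov--Sinai fact that $h(\mu)=\lim_n H_n(\mu)/n$ for the cylinder partition, which follows from subadditivity of $H_n$ (Fekete's lemma) and holds for any shift-invariant $\mu$, ergodic or not. This does not affect the validity of your argument, but the citation should be corrected.
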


We will make use of the following ergodic theorem for relative entropy.

\begin{thm}[{\cite[Theorem 11.1]{GrayEntropyBook}}]\label{thm:relative entropy ergodic thm}
	Let $\mu \in \Ek_\sig(\Sigma)$ and $\nu \in \MM_\sig(\Sigma)$ be such that $\mu|_n \ll \nu|_n$ for every $n \in \N$ and assume that $\nu$ is a $k$-step Markov measure. Then
	\[ \lim \limits_{n \to \infty} \frac{1}{n}\log \frac{\mu([\om|_n])}{\nu([\om|_n])} = h(\mu||\nu) \text{ for } \mu\text{-a.e. } \om \in \Sigma.\]
\end{thm}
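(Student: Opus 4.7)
The plan is to decompose the log-ratio into two pieces and invoke two ergodic theorems, one classical (Shannon--McMillan--Breiman) and one Birkhoff-type applied to the Markov structure of $\nu$. Write
\[ \frac{1}{n}\log \frac{\mu([\om|_n])}{\nu([\om|_n])} = \frac{1}{n}\log \mu([\om|_n]) - \frac{1}{n}\log \nu([\om|_n]). \]
The first term is handled directly by the Shannon--McMillan--Breiman theorem for the ergodic measure $\mu$: for $\mu$-a.e. $\om$, $\frac{1}{n}\log \mu([\om|_n]) \to -h(\mu)$.

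For the second term, I would use the Markov structure. By \eqref{eq:k-step Markov def}, whenever $\nu([\om|_n])>0$,
\[ \log \nu([\om|_n]) = \log \nu([\om_1,\ldots,\om_k]) + \sum_{j=1}^{n-k}\log P_\nu(\om_{j+k}\mid \om_j,\ldots,\om_{j+k-1}). \]
The hypothesis $\mu|_n\ll\nu|_n$ guarantees that for $\mu$-a.e. $\om$ every finite cylinder $[\om|_n]$ has positive $\nu$-measure, so all transition probabilities above are strictly positive and the identity is well-defined $\mu$-a.e. Define $f:\Sigma\to[-\infty,0]$ by $f(\tau)=\log P_\nu(\tau_{k+1}\mid \tau_1,\ldots,\tau_k)$; the tail sum above is exactly $\sum_{i=0}^{n-k-1} f(\sigma^i\om)$, a Birkhoff sum for the left shift $\sigma$.

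By Birkhoff's ergodic theorem applied to the $\mu$-ergodic system $(\Sigma,\sigma,\mu)$,
\[ \frac{1}{n}\sum_{i=0}^{n-k-1}f(\sigma^i\om) \xrightarrow[n\to\infty]{} \int f\,d\mu = \sum_{\tau\in\Ak^{k+1}}\mu([\tau])\log P_\nu(\tau_{k+1}\mid\tau_1,\ldots,\tau_k) \quad \mu\text{-a.e.}, \]
while the boundary term $\frac{1}{n}\log\nu([\om_1,\ldots,\om_k])$ vanishes in the limit. Combining with the first limit and invoking Lemma \ref{lem:relative entropy markov formula} yields, for $\mu$-a.e. $\om$,
\[ \frac{1}{n}\log\frac{\mu([\om|_n])}{\nu([\om|_n])} \longrightarrow -h(\mu) - \sum_{\tau\in\Ak^{k+1}}\mu([\tau])\log P_\nu(\tau_{k+1}\mid\tau_1,\ldots,\tau_k) = h(\mu\|\nu). \]

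The main technical point to verify is that $f\in L^1(\mu)$ so that Birkhoff applies. Since $P_\nu\le 1$, we have $f\le 0$, so integrability amounts to ruling out $f=-\infty$ on a set of positive $\mu$-measure. This is where the absolute continuity hypothesis is used: on $\mu$-a.e. $\om$, the finite word $(\om_1,\ldots,\om_{k+1})$ has $\mu([\om_1,\ldots,\om_{k+1}])>0$, hence by $\mu|_{k+1}\ll\nu|_{k+1}$ also $\nu([\om_1,\ldots,\om_{k+1}])>0$, forcing $P_\nu(\om_{k+1}\mid\om_1,\ldots,\om_k)>0$. Since $\Ak^{k+1}$ is finite, the $\mu$-essential range of $f$ is a finite set of real numbers, so $f$ is bounded $\mu$-a.e. and integrable. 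This is the only nontrivial obstacle; everything else is a routine combination of SMB, Birkhoff and the Markov chain rule.
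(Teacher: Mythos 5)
Your proof is correct. The paper does not prove this statement -- it cites it directly from \cite[Theorem 11.1]{GrayEntropyBook} -- so there is no proof in the paper to compare against; but your argument is the standard and clean route to this result: split the log-ratio, apply Shannon--McMillan--Breiman to the $\mu$-piece, and unwind the $k$-step Markov chain rule on the $\nu$-piece into a Birkhoff sum of the function $f(\tau)=\log P_\nu(\tau_{k+1}\mid\tau_1,\ldots,\tau_k)$, with Lemma \ref{lem:relative entropy markov formula} identifying the sum of the two limits as $h(\mu\|\nu)$. You also correctly isolate and resolve the one nonobvious integrability point: $f$ depends only on finitely many coordinates, and the absolute continuity hypothesis $\mu|_{k+1}\ll\nu|_{k+1}$ forces $f>-\infty$ on a set of full $\mu$-measure, so $f$ is $\mu$-essentially bounded and Birkhoff applies. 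No gaps.
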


We will be interested in approximating ergodic measures by Markov measures.

\begin{defn}
	For $\mu \in \MM_\sig(\Sigma)$ and $k \in \N$, we define the \textbf{$k$-th Markov approximation of $\mu$} to be  the $k$-step Markov measure $\mu^{(k)}$ with initial distribution $\mu^{(k)}|_k = \mu|_k$
	and transition kernel $P_{\mu^{(k)}}$ given by
	\[ P_{\mu^{(k)}}(\tau | \tau_1, \ldots, \tau_k) = \begin{cases} \frac{\mu([\tau_1, \ldots, \tau_k, \tau])}{\mu([\tau_1, \ldots, \tau_k])} & \text{ if } \mu([\tau_1, \ldots, \tau_k]) > 0 \\
		0 & \text{ if } \mu([\tau_1, \ldots, \tau_k]) = 0 \end{cases} \text{ for } \tau, \tau_1, \ldots \tau_k \in \mA. \]
\end{defn}

It is straightforward to check that $\mu^{(k)}$ is well defined and it is a shift-invariant measure (as we have assumed $\mu$ to be shift-invariant). Moreover, Markov approximations have the following properties.

\begin{lem}\label{lem:markov approx properties}
	Let  $\mu \in \MM_\sig(\Sigma)$ . The following hold:
	\begin{enumerate}[(1)]
		\item\label{it:markov approx ac margins} for every $k,n \in \N$, we have $\mu|_n \ll \mu^{(k)}|_n$,
		\item if $\mu \in \Ek_\sig(\Sigma)$, then  $\mu^{(k)} \in \Ek_\sig(\Sigma)$ for every $k \in \N$,
		\item $h(\mu||\mu^{(k)}) = h(\mu^{(k)}) - h(\mu)$ for every $k \in \N$,
		\item $\lim \limits_{k \to \infty} h(\mu||\mu^{(k)}) = 0$.
	\end{enumerate}
\end{lem}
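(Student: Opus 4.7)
The plan is to handle the four points in order, relying on the Markov structure of $\mu^{(k)}$ and the shift-invariance of $\mu$ together with the information-theoretic facts already collected in Lemmas~\ref{lem:markov ergodic}--\ref{thm:relative entropy ergodic thm}.

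For~(1), I would first observe that by construction $\mu^{(k)}|_n = \mu|_n$ for $n\le k$, so the claim is trivial there. For $n>k$, I would unfold the Markov factorization \eqref{eq:k-step Markov def}: if $\mu^{(k)}([\om])=0$, then either $\mu([\om|_k])=0$, in which case $\mu([\om])\le\mu([\om|_k])=0$, or some transition factor $P_{\mu^{(k)}}(\om_{j+k}\mid\om_j,\dots,\om_{j+k-1})$ vanishes. The latter means $\mu([\om_j,\dots,\om_{j+k}])=0$, and by shift-invariance $\mu([\om_1,\dots,\om_{j+k}])\le\mu([\om_j,\dots,\om_{j+k}])=0$, so $\mu([\om])=0$. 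For~(2), I would verify the criterion in Lemma~\ref{lem:markov ergodic}. Given $\om,\tau\in\Ak^k$ with $\mu([\om]),\mu([\tau])>0$, ergodicity of $\mu$ gives some $n\ge k$ with $\mu([\om]\cap\sigma^{-n}[\tau])>0$; decomposing this set into cylinders of length $n+k$, at least one such cylinder $[u]$ has positive $\mu$-measure and begins with $\om$, ends with $\tau$. The same shift-invariance argument as above shows all the relevant transition probabilities are then positive, so $\mu^{(k)}([u])>0$, which is what is required.

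For~(3), I would plug $\nu=\mu^{(k)}$ into Lemma~\ref{lem:relative entropy markov formula}, getting
\[
h(\mu\|\mu^{(k)})=-h(\mu)-\sum_{\om\in\Ak^{k+1}}\mu([\om])\log P_{\mu^{(k)}}(\om_{k+1}\mid\om_1,\dots,\om_k).
\]
The key identity is that the $(k{+}1)$-dimensional marginals of $\mu$ and $\mu^{(k)}$ agree, which is an immediate consequence of the definition $\mu^{(k)}|_k=\mu|_k$ together with the form of $P_{\mu^{(k)}}$. Combined with the Markov entropy formula~\eqref{eq:markov entropy} this rewrites the sum as $-h(\mu^{(k)})$, yielding $h(\mu\|\mu^{(k)})=h(\mu^{(k)})-h(\mu)$.

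For~(4), by~(3) it suffices to prove $h(\mu^{(k)})\to h(\mu)$. I would again use the Markov entropy formula to identify $h(\mu^{(k)})$ with the conditional entropy $H_\mu(X_{k+1}\mid X_1,\dots,X_k)=H(\mu|_{k+1})-H(\mu|_k)$, where $H$ is the Shannon entropy of a finite distribution. Shift-invariance of $\mu$ makes the sequence $H(\mu|_{n+1})-H(\mu|_n)$ non-increasing, and its limit equals $\lim_n \frac{1}{n}H(\mu|_n)=h(\mu)$; this is the standard Kolmogorov--Sinai relation for Bernoulli/full shift partitions and is a classical piece of information theory. The only subtle point -- and the step I expect to require slight care -- is the shift-invariance argument in~(1) and~(2), since one must distinguish vanishing of $\mu([u])$ from vanishing of the shifted cylinders $\mu([u_j,\dots,u_{j+l}])$; everything else is a bookkeeping exercise built on the lemmas already quoted.
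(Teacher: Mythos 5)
Your argument is correct, and for parts (1)--(3) it is essentially identical to the paper's proof: the same factorization argument for absolute continuity, the same appeal to the ergodicity criterion for $k$-step Markov measures, and the same combination of the relative-entropy formula with the Markov entropy formula using $\mu^{(k)}|_{k+1}=\mu|_{k+1}$. One minor simplification you make in (1) is worth noting: you observe that $P_{\mu^{(k)}}(\om_{j+k}\mid\om_j,\dots,\om_{j+k-1})=0$ always forces $\mu([\om_j,\dots,\om_{j+k}])=0$ (since a vanishing denominator already kills the numerator cylinder), which collapses the paper's two-case discussion into one.

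For part (4) your route is genuinely different from the paper's: the paper simply cites \cite[Theorem 3.4]{GrayEntropyBook}, whereas you give a self-contained proof by identifying $h(\mu^{(k)})=H(\mu|_{k+1})-H(\mu|_k)$ as the conditional entropy $H_\mu(X_{k+1}\mid X_1,\dots,X_k)$ of the stationary process, using monotonicity of conditional entropy in $k$, and closing with the Ces\`aro relation $\lim_k\bigl(H(\mu|_{k+1})-H(\mu|_k)\bigr)=\lim_n\frac{1}{n}H(\mu|_n)=h(\mu)$. This is correct and standard; it buys you an elementary, reference-free argument at the cost of a few extra lines, and it clarifies that (4) is really a statement about conditional entropies converging to entropy rate rather than anything specific to the relative-entropy machinery.
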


\begin{proof}
	\begin{enumerate}[(1)]
		\item It is clear that $\mu|_n \ll \mu^{(k)}|_n$ for $n \leq k$ (the two distributions are equal in this case). For $n > k$, it follows from \eqref{eq:k-step Markov def} for $\nu = \mu^{(k)}$ that $\mu^{(k)}([\om_1, \ldots, \om_n]) = 0$ implies that either $\mu([\om_1, \ldots, \om_k]) = 0$ or $P_{\mu^{(k)}}(\om_{j+k}|\om_j, \ldots, \om_{j+k-1}) = 0$ for some $j \in \{1, \ldots, n-k\}$. As the latter case gives $\mu([\om_j, \ldots, \om_{j+k-1}]) = 0$ or $\mu([\om_j, \ldots, \om_{j+k}]) = 0$, we obtain $\mu([\om_1, \ldots, \om_n]) = 0$ in both cases, so  $\mu|_n \ll \mu^{(k)}|_n$.
		\item We will apply Lemma \ref{lem:markov ergodic} to $\mu^{(k)}$. Let $\om, \tau \in \mA^k$ be such that $\mu^{(k)}([\om]) > 0, \mu^{(k)}([\tau]) > 0$. Then also $\mu([\om]) > 0, \mu([\tau]) > 0$ and by ergodicity of $\mu$, there exists $u \in \Sigma_*$ with $\mu([u])>0$ such that $\om$ is a prefix of $u$ and $\tau$ is a suffix of $u$. By point \ref{it:markov approx ac margins} we have $\mu^{(k)}([u]) > 0$.
		\item This follows by combining Lemma \ref{lem:relative entropy markov formula} with \eqref{eq:markov entropy}, as $\mu^{(k)}|_{k+1} = \mu|_{k+1}$.
		\item This is \cite[Theorem 3.4]{GrayEntropyBook}.
	\end{enumerate}
\end{proof}

\begin{prop}\label{prop:separability}
	There exists a countable set $\Vk \subset \Ek_\sig(\Sigma)$ such that every $\nu \in \Vk$ is a $k$-step Markov measure for some $k \in \N$ and for every $\mu \in \Ek_\sig(\Sigma)$ and $\eps>0$ there exists $\nu \in \Vk$ such that $0 \leq h(\mu||\nu) < \eps$, and  $\mu|_n \ll \nu|_n$ for every $n \in \N$.
\end{prop}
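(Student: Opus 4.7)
The plan is to take $\Vk$ to be the countable collection of all ergodic $k$-step Markov measures (over all $k \in \N$) whose $(k+1)$-dimensional marginal is a rational probability vector, and then to approximate a given $\mu \in \Ek_\sig(\Sigma)$ in two stages: first by the Markov approximation $\mu^{(k)}$ of large order $k$ via Lemma \ref{lem:markov approx properties}(4), then by a rationally parametrised Markov measure which shares the combinatorial support of $\mu^{(k)}|_{k+1}$. Countability of $\Vk$ is immediate because a $k$-step Markov measure is uniquely determined by its shift-invariant $(k+1)$-marginal (which lives in a finite-dimensional simplex whose rational points form a countable dense set), and ergodicity is a property of the support of this marginal by Lemma \ref{lem:markov ergodic}.

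First I would apply Lemma \ref{lem:markov approx properties}(4) to pick $k$ with $h(\mu \| \mu^{(k)}) < \eps/2$, and set $S = \{\omega \in \Ak^{k+1} : \mu([\omega]) > 0\}$, which coincides with the support of $\mu|_{k+1} = \mu^{(k)}|_{k+1}$. The set of shift-invariant probability vectors on $\Ak^{k+1}$ supported exactly on $S$ is the relative interior of a rational face of a finite-dimensional polytope, and $\mu|_{k+1}$ lies in this relative interior; hence rational vectors with the same support $S$ are dense near $\mu|_{k+1}$. I would then let $\nu \in \Vk$ be the $k$-step Markov measure associated with such a rational vector chosen sufficiently close to $\mu|_{k+1}$.

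The three required properties are then verified as follows. Ergodicity of $\nu$ follows from Lemma \ref{lem:markov ergodic} because it depends only on the combinatorial support of the $(k+1)$-marginal, which by construction equals $S$ and is shared with the ergodic measure $\mu^{(k)}$. Absolute continuity $\mu|_n \ll \nu|_n$ is obtained by combining Lemma \ref{lem:markov approx properties}(1) with \eqref{eq:k-step Markov def}: if $\mu([\omega]) > 0$ for $\omega \in \Ak^n$, then by shift-invariance every $(k+1)$-subword of $\omega$ lies in $S$, and the product formula for $\nu$ yields $\nu([\omega]) > 0$. For the relative entropy bound I would apply Lemma \ref{lem:relative entropy markov formula} to both $\mu^{(k)}$ and $\nu$, giving
$$h(\mu \| \nu) - h(\mu \| \mu^{(k)}) = \sum_{\omega \in S} \mu([\omega]) \log \frac{P_{\mu^{(k)}}(\omega_{k+1} | \omega_1, \ldots, \omega_k)}{P_{\nu}(\omega_{k+1} | \omega_1, \ldots, \omega_k)},$$
which depends continuously on $\nu|_{k+1}$ in a neighbourhood of $\mu|_{k+1}$ since both transition kernels are uniformly bounded away from $0$ on $S$. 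Choosing the rational approximation close enough then forces $|h(\mu \| \nu) - h(\mu \| \mu^{(k)})| < \eps/2$, hence $h(\mu \| \nu) < \eps$; non-negativity of relative entropy is automatic.

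The main obstacle I anticipate is not the approximation in relative entropy itself, but rather preserving the combinatorial support of $\mu|_{k+1}$ under rational perturbation, since this is precisely what simultaneously guarantees ergodicity of $\nu$ (via Lemma \ref{lem:markov ergodic}) and $\mu|_n \ll \nu|_n$ (via the Markov product formula). This is resolved by working strictly inside the relative interior of the rational face of the polytope determined by $S$ and invoking density of rationals in this relative interior.
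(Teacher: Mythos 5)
Your proof is correct and follows the same overall strategy as the paper: approximate $\mu$ by its Markov approximation $\mu^{(k)}$ via Lemma \ref{lem:markov approx properties}(4), then rationally perturb while preserving the support of the $(k+1)$-marginal, controlling the change in relative entropy via Lemma \ref{lem:relative entropy markov formula}. The one technical difference is that the paper defines $\Vk$ via rational \emph{transition kernels} and then invokes continuous dependence of the stationary distribution on the kernel (with fixed zero pattern, citing Walters), whereas you parametrize directly by rational \emph{$(k+1)$-marginals} and use that shift-invariant probability vectors with a prescribed support form the relative interior of a rational face of a rational polytope; this sidesteps the continuity-of-stationary-distribution argument entirely and is arguably a bit more self-contained, at no cost.
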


\begin{proof}
	We define $\Vk$ to consist of all ergodic $k$-step Markov measures $\nu$ for which the transition kernel $P_\nu$ takes only rational values. This is clearly a countable set.  Fix $\mu \in \Ek_\sigma(\Sigma)$ and $\eps > 0$. By Lemma \ref{lem:markov approx properties} there exists $ k_0 $ such that for all $k \geq k_0$
	\begin{equation}\label{eq:markov approx bounds}
		0 \leq h(\mu||\mu^{(k)}) = h(\mu^{(k)}) - h(\mu) < \eps/2.
	\end{equation}
	Consider $\nu$ which is a $k$-step Markov measure for which the transition kernel $P_{\nu}$ has exactly the same zeros as $P_{\mu^{(k)}}$. As $\mu^{(k)}$ is ergodic by Lemma \ref{lem:markov approx properties}, every such $\nu$ is also ergodic (by Lemma \ref{lem:markov ergodic}) and hence stationary distribution $\nu|_k$ depends continuously on the transition kernel $P_{\nu}$ (by the uniqueness of the stationary distribution, as long as we preserve the zeros of the transition kernel, see \cite[Theorem 1.19]{WaltersErgodicBook}). Therefore $\nu|_{k+1}$ depends continuously on the transition kernel $P_\nu$ and hence we can find $\nu \in \Vk$ such that $\nu|_{k+1}$ and $\mu^{(k)}|_{k+1} = \mu|_{k+1}$ are arbitrarily close and have the same zeros. Consequently, by the formula in Lemma \ref{lem:relative entropy markov formula}, we can choose $\nu \in \Vk$ such that $|h(\mu||\nu) - h(\mu||\mu^{(k)})| < \eps/2$. Combining this with \eqref{eq:markov approx bounds} and noting that $\mu|_n \ll \mu^{(k)}|_n \ll \nu|_n$ for every $n \in \N$ (by Lemma \ref{lem:markov approx properties} and the fact that $P_{\mu^{(k)}}$ and $P_{\nu}$ have the same zeros) finishes the proof.
\end{proof}

It remains to connect the relative entropy with relative dimension. We do so for a class of metrics $\rho$ on $\Sigma$ satisfying the following assumptions:

\begin{enumerate}[(i)]
	\item\label{it: rho to psi} there exists a function $\psi : \Sigma^* \to (0,\infty)$ so that $\rho(\om, \tau) = \psi(\om \wedge \tau)$ for every $\om \neq \tau \in \Sigma$,
	\item\label{it: psi mono} there exists $\gamma \in (0,1)$ such that $\psi(\om|_{n+1}) \leq \gamma \psi(\om|_{n})$ for each $n \geq 1$ and $\om \in \Sigma$.

\end{enumerate}

\begin{lem}\label{lem: ball cylinder}
Let $\rho$ be a metric on $\Sigma$ satisfying \ref{it: rho to psi} and \ref{it: psi mono}. Fix $N \in \N$ so that $\gamma^N \leq 1/2$. Then for every $r > 0$ small enough the following holds: for every $\om \in \Sigma$ there exists $n \geq 1$ such that for
\begin{equation}\label{eq: ball cylinder} [\om|_{n+1}] \subset  B(\om,r) \subset [\om|_n]
\end{equation}
and
\begin{equation}\label{eq: 2r ball cylinder}
B(\om, 2r) \subset [\om|_{n-N}]
\end{equation}
where $B(\om, r)$ denotes the ball in the metric $\rho$. Moreover, $n$ can be chosen so that $n \leq \frac{\log r}{\log \gamma} + B$ for some constant $B$ (depending only on $\psi$).
\end{lem}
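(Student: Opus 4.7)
The plan is to set $n = n(\om, r) := \max\{k \geq 1 : \psi(\om|_k) \geq r\}$ and verify all three assertions by direct calculation from properties \ref{it: rho to psi} and \ref{it: psi mono}. Iterating \ref{it: psi mono} gives $\psi(\om|_k) \leq \gamma^{k-1} \max_{a \in \Ak} \psi(a)$, which tends to zero; on the other hand $\psi(\om|_1)$ is bounded below uniformly in $\om$ by $\min_{a \in \Ak} \psi(a) > 0$. Hence for $r$ sufficiently small (independently of $\om$), $n$ is a well-defined finite integer that grows without bound as $r \to 0$. The crucial qualitative consequence of \ref{it: psi mono} that I will use is that $\psi(\om|_k)$ is \emph{strictly} decreasing in $k$ along the prefixes of any fixed $\om$.

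For \eqref{eq: ball cylinder} the argument is symmetric. The inclusion $[\om|_{n+1}] \subset B(\om, r)$ is immediate: if $\tau \in [\om|_{n+1}] \setminus \{\om\}$ then $|\om \wedge \tau| \geq n+1$, so $\rho(\om, \tau) = \psi(\om \wedge \tau) \leq \psi(\om|_{n+1}) < r$ by maximality of $n$. The inclusion $B(\om, r) \subset [\om|_n]$ is proved by contradiction: if $\tau \in B(\om, r)$ satisfies $\tau \notin [\om|_n]$, then $|\om \wedge \tau| \leq n-1$, so by strict monotonicity $\rho(\om, \tau) = \psi(\om \wedge \tau) \geq \psi(\om|_{n-1}) > \psi(\om|_n) \geq r$, contradicting $\rho(\om, \tau) < r$.

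For \eqref{eq: 2r ball cylinder} I follow the same template but exploit the quantitative rate in \ref{it: psi mono}. Iterating $N$ times gives $\psi(\om|_n) \leq \gamma^N \psi(\om|_{n-N})$, and combining this with $\gamma^N \leq 1/2$ and $\psi(\om|_n) \geq r$ yields $\psi(\om|_{n-N}) \geq 2r$ (valid once $r$ is small enough that $n \geq N + 1$). Repeating the contradiction argument with radius $2r$ and prefix length $n-N$ then gives $B(\om, 2r) \subset [\om|_{n-N}]$. Finally, the quantitative estimate $n \leq \frac{\log r}{\log \gamma} + B$ is read off from $r \leq \psi(\om|_n) \leq \gamma^{n-1} \max_{a \in \Ak} \psi(a)$ by taking logarithms (remembering $\log \gamma < 0$), with $B$ depending only on $\gamma$ and $\max_{a \in \Ak} \psi(a)$.

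I do not anticipate any serious obstacle here. The only points requiring care are (a) keeping strict versus non-strict inequalities straight, since the lemma involves open balls and therefore needs \emph{strict} monotonicity of $\psi$ along prefixes, which \ref{it: psi mono} supplies via $\gamma < 1$, and (b) making the threshold ``$r$ small enough'' uniform in $\om$, which works because $\Ak$ is finite so $\psi$ restricted to letters takes only finitely many values.
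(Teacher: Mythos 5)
Your proof is correct and matches the paper's argument essentially step for step: you choose the same $n$ (phrased as a maximum rather than as the unique index with $\psi(\om|_{n+1})<r\le\psi(\om|_n)$, which is the same thing), derive \eqref{eq: ball cylinder} from monotonicity of $\psi$ along prefixes, get \eqref{eq: 2r ball cylinder} by iterating \ref{it: psi mono} $N$ times and using $\gamma^N\le 1/2$, and read off $n\le\frac{\log r}{\log\gamma}+B$ from $r\le\psi(\om|_n)\le\gamma^{n-1}\max_{a\in\Ak}\psi(a)$. The only cosmetic difference is that you prove $B(\om,r)\subset[\om|_n]$ by contradiction via a strict inequality, whereas the paper argues directly with the non-strict bound $\rho(\om,\tau)<r\le\psi(\om|_n)$; both are valid, and strict monotonicity is not actually needed for this inclusion (non-strict suffices since the ball is open).
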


\begin{proof}
Take $0 < r < \min \{ \psi(i) : i \in \Ak \}$ and given $\om \in \Om$, let $n \geq 1$ to be the unique integer such that
\begin{equation}\label{eq: r n choice}  \psi(\om|_{n+1}) < r \leq \psi(\om|_n).
\end{equation}
If $\tau \in B(\om, r)$, then $\rho(\om, \tau) = \psi(\om \wedge \tau) < r \leq \psi(\om|_n)$, and hence, by \ref{it: psi mono},  $\om|_n$ is a prefix of $\om \wedge \tau$. This implies $\tau \in [\om|_n]$, so $B(\om, r) \subset [\om|_n]$. On the other hand, if $\tau \in [\om|_{n+1}]$, then $\om|_{n+1}$ is a prefix of $\om \wedge \tau$, so \ref{it: psi mono} gives $\rho(\om, \tau) = \psi(\om \wedge \tau) \leq \psi(\om|_{n+1}) < r$. Therefore $[\om|_{n+1}]\subset B(\om, r)$. This proves \eqref{eq: ball cylinder}. Set $A = \max\{ \psi(i) : i \in \Ak \}$ and note that \ref{it: psi mono} implies
\[ \psi(\om|_n) \leq A \gamma^{n-1} , \]
hence if \eqref{eq: r n choice} holds, then $r \leq A \gamma^{n-1}$, so $n \leq \frac{\log r}{\log \gamma} - \frac{\log A}{\log \gamma} + 1$. Finally, if \eqref{eq: r n choice} holds, then by \ref{it: psi mono}
\[ 2r \leq 2\psi(\om|_n) \leq 2\gamma^N \psi(\om|_{n-N}) \leq \psi(\om|_{n-N}), \]
since $N$ is chosen so that $\gamma^N \leq 1/2$. Then the same argument as before shows $B(\om, 2r) \subset [\om|_{n-N}]$, proving \eqref{eq: 2r ball cylinder}.
\end{proof}

Now we are ready to establish relative dimension separability of the set of ergodic measures on $\Sigma$ with respect to a large class of metrics.

\begin{prop}\label{prop: general ergodic rds}
Let $\rho$ be a metric on $\Sigma$ satisfying \ref{it: rho to psi} and \ref{it: psi mono}. Then the set $\Ek_\sigma(\Sigma)$ of ergodic shift-invariant probability measures on $\Sigma$ is relative dimension separable with respect to $\rho$.
\end{prop}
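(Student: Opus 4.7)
The plan is to combine the countable family of ergodic Markov measures supplied by Proposition \ref{prop:separability} with the relative entropy ergodic theorem (Theorem \ref{thm:relative entropy ergodic thm}) and the ball-to-cylinder comparison from Lemma \ref{lem: ball cylinder}. Specifically, I would take $\Vk$ to be exactly the countable family from Proposition \ref{prop:separability}; then, given $\mu \in \Ek_\sigma(\Sigma)$ and $\eps > 0$, I would pick $\nu \in \Vk$ (a $k$-step ergodic Markov measure, for some $k = k(\nu)$) with relative entropy $h(\mu||\nu)$ small enough, depending on $\eps$ and on the constant $\alpha := -\log \gamma$ from condition \ref{it: psi mono}, and verify that $\dim(\mu||\nu, \rho) < \eps$.

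To carry this out, fix such $\nu$ and use Lemma \ref{lem: ball cylinder}: for all sufficiently small $r > 0$ there is $n=n(r) \in \N$ with
\[
[\om|_{n+1}] \subset B(\om,r) \subset [\om|_n], \qquad n(r) \leq \tfrac{\log r}{\log \gamma} + B,
\]
so in particular $n(r) \to \infty$ and $n(r)/|\log r| \to 1/\alpha$ as $r \to 0$. This yields the pinch
\[
\frac{\mu([\om|_{n+1}])}{\nu([\om|_n])} \leq \frac{\mu(B(\om,r))}{\nu(B(\om,r))} \leq \frac{\mu([\om|_n])}{\nu([\om|_{n+1}])}.
\]
Next, I would apply Theorem \ref{thm:relative entropy ergodic thm} to conclude that for $\mu$-a.e.\ $\om \in \Sigma$,
\[
\tfrac{1}{n} \log \tfrac{\mu([\om|_n])}{\nu([\om|_n])} \longrightarrow h(\mu||\nu) \in [0, \eps \alpha / 2),
\]
so that $\mu([\om|_n])/\nu([\om|_n]) \in [e^{n(h(\mu||\nu)-\delta)}, e^{n(h(\mu||\nu)+\delta)}]$ for every $\delta > 0$ and all sufficiently large $n$.

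The remaining — and main — obstacle is to replace $\nu([\om|_{n+1}])$ by $\nu([\om|_n])$ (and vice versa) with only a multiplicative constant error. Here I would exploit that $\nu$ is a $k$-step Markov measure, so for $n \geq k$ the ratio $\nu([\om|_{n+1}])/\nu([\om|_n])$ equals the transition probability $P_\nu(\om_{n+1} \mid \om_{n-k+1}, \ldots, \om_n)$, which takes only finitely many values. The absolute continuity $\mu|_{n+1} \ll \nu|_{n+1}$ guaranteed by Proposition \ref{prop:separability} ensures that for $\mu$-a.e.\ $\om$, every such encountered transition is strictly positive, hence uniformly bounded below and above by constants $c_0, c_0^{-1} \in (0, \infty)$ depending only on $\nu$. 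Consequently, for $\mu$-a.e.\ $\om$,
\[
c_0\, e^{(n(r)+1)(h(\mu||\nu)-\delta)} \leq \frac{\mu(B(\om,r))}{\nu(B(\om,r))} \leq c_0^{-1}\, e^{n(r)(h(\mu||\nu)+\delta)}.
\]

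To finish, substitute $n(r) \leq \log r / \log \gamma + B$, take logarithms, and divide by $\log r$ (which is negative), so that the bounded additive constants vanish in the limit. Together with $n(r)/|\log r| \to 1/\alpha$ this yields
\[
-\tfrac{h(\mu||\nu)+\delta}{\alpha} \leq \liminf_{r \to 0}\tfrac{\log(\mu(B(\om,r))/\nu(B(\om,r)))}{\log r} \leq \limsup_{r \to 0}\tfrac{\log(\mu(B(\om,r))/\nu(B(\om,r)))}{\log r} \leq \tfrac{\delta}{\alpha}
\]
for $\mu$-a.e.\ $\om$. Letting $\delta \to 0$ and recalling $h(\mu||\nu) < \eps\alpha/2$, we get $\dim(\mu||\nu, \rho) < \eps$. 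Since $\Vk$ is countable and $\eps > 0, \mu \in \Ek_\sigma(\Sigma)$ were arbitrary, this establishes relative dimension separability. The delicate point to watch is ensuring the uniform two-sided bounds on the Markov transitions on a set of full $\mu$-measure; otherwise the one-step discrepancy between $\nu([\om|_n])$ and $\nu([\om|_{n+1}])$ could destroy the desired polynomial comparison.
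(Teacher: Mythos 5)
Your proposal is correct and follows essentially the same route as the paper's proof: both take $\Vk$ from Proposition \ref{prop:separability}, combine the relative entropy ergodic theorem (Theorem \ref{thm:relative entropy ergodic thm}) with the ball-to-cylinder comparison of Lemma \ref{lem: ball cylinder}, and exploit the $k$-step Markov structure of $\nu$ to bound the one-level discrepancy $\nu([\om|_n])/\nu([\om|_{n+1}])$ by a constant $K_\nu$ (your $c_0^{-1}$). The only cosmetic difference is that the paper absorbs the small-relative-entropy bound directly into a $2^{\pm\eps n}$ comparison of cylinder measures, whereas you separate the relative entropy $h(\mu\|\nu)$ from the error $\delta$ and pass to limits at the end; the two are equivalent.
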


\begin{proof}
Let $\Vk \subset \Ek_\sig(\Sigma)$ be as in Proposition \ref{prop:separability}. Given $\mu \in \Ek_\sig(\Sigma)$ and $\eps > 0$, let $\nu \in \Vk$ be such that  $h(\mu||\nu) < \eps$. By Theorem \ref{thm:relative entropy ergodic thm}, for $\mu$-a.e. $\om \in \Sigma$ there exists $n_0 = n_0(\om, \eps)>0$ such that for all $n \geq n_0$
\begin{equation}\label{eq: meas comp cylinders}  2^{-\eps n} \nu([\om|_n])\leq \mu([\om|_n]) \leq  2^{\eps n} \nu([\om|_n]).
\end{equation}
Moreover, as $\nu$ is a $k$-step Markov measure we have
\[ K_{\nu}:= \max \left\{ \frac{\nu([\om|_n])}{\nu([\om|_{n+1}])} : \om \in \Sigma, n \geq k \text{ with } \nu([\om|_{n+1}]) > 0 \right\} < \infty,\]
as $K_{\nu} = \max \left\{ P_{\nu}(\om_{k+1} | \om_1, \ldots, \om_k)^{-1} : \om_1, \ldots, \om_{k+1} \in \Ak \text{ and } P_{\nu}(\om_{k+1} |  \om_1, \ldots, \om_k) \neq 0  \right\}$. Fix now $\om \in \supp (\mu)$ such that  \eqref{eq: meas comp cylinders} holds. Note that as $\om \in \supp (\mu)$, we have $\mu([\om|_n]) > 0$ for every $n \geq 1$ and hence \eqref{eq: meas comp cylinders} implies $\nu([\om|_n]) > 0$ for all $n \geq 1$. Let $A = \frac{1}{- \log \gamma}$. By Lemma \ref{lem: ball cylinder}, we have for all $r > 0$ small enough and $n$ satisfying $n_0 \leq n \leq A \log \frac{1}{r} + B$
\[
\mu(B(\om,r)) \leq \mu([\om|_n]) \leq  2^{\eps n} \nu([\om|_n]) \leq 2^{B\eps} K_{\nu}r^{-A\eps}\nu([\om|_{n+1}]) \leq 2^{B\eps}  K_{\nu}r^{-A\eps} \nu(B(\om,r)),
\]
and similarly
\[
\begin{split} \mu(B(\om, r)) & \geq \mu([\om|_{n+1}]) \geq  2^{-\eps (n+1)} \nu([\om|_{n+1}]) \geq (2^{\eps} K_{\nu})^{-1} 2^{-\eps n} \nu([\om|_n]) \\
&  \geq (2^{(B+1)\eps} K_{\nu})^{-1} r^{A\eps} \nu(B(\om,r)).
\end{split}\]
The last two inequalities give
\[ -A\eps \leq \liminf \limits_{r \to 0} \frac{\log \frac{\mu(B(\om,r))}{\nu(B(\om,r))}}{\log r}  \leq \limsup \limits_{r \to 0} \frac{\log \frac{\mu(B(\om,r))}{\nu(B(\om,r))}}{\log r} \leq A\eps.\]
As this holds for $\mu$-a.e. $\om \in \Sigma$, we have
\[ \dim(\mu||\nu, \rho) \leq A\eps. \]
Since $A$ is a constant depending only on $\rho$, we see that $\Ek_\sigma(\Sigma)$ is relative dimension separable with respect to $\rho$.
\end{proof}

If $\Fk$ is a conformal $C^{1+\theta}$ IFS, then setting $\psi(\om) = \|f'_\om\|$ one obtains Proposition \ref{prop: rel dim sep IFS} from Proposition \ref{prop: general ergodic rds}.

\subsection{Gibbs measures}
Let us recall definition of a Gibbs measure on a symbolic space $\Sigma = \Ak^\N$.

\begin{defn}\label{defn: Gibbs measure}
Let $\phi : \Sigma \to \R$ be a continuous function on $\Sigma$. A shift-invariant ergodic probability measure $\mu$ on $\Om$ is called a {\bf Gibbs measure of the potential} $\phi$ if there exists $P \in \R$ and $C\ge 1$ such that for every $\om \in \Sigma$ and $n \in \N$, holds the inequality
\begin{equation}\label{eq: Gibbs ineq} C^{-1} \leq \frac{\mu([\om|_n])}{\exp(-Pn + \sum \limits_{k=0}^{n-1} \phi(\sigma^k \om))} \leq C.
\end{equation}
\end{defn}
It is known that if $\phi$ is H\"older continuous, then there exists a unique Gibbs measure of $\phi$ (see \cite{BR08}). Here, H\"older continuity means H\"older continuity with respect to any metric of the form $\rho(\om, \tau) = \gamma^{|\om \wedge \tau|}$ on $\Sigma$ for $\gamma \in (0,1)$. We shall prove that the set of Gibbs measures is uniform relative dimension separable (recall Definition \ref{defn: urds}) with respect to metrics on $\Sigma$ satisfying \ref{it: rho to psi} and \ref{it: psi mono}.

\begin{prop}\label{prop: Gibbs urds}
Let $\rho$ be a metric on $\Sigma$ satisfying \ref{it: rho to psi} and \ref{it: psi mono}. Then the set $G_\sigma(\Sigma)$ consisting of all Gibbs measures corresponding to H\"older continuous potentials on $\Sigma$ is uniform relative dimension separable with respect to $\rho$. Moreover, each $\mu \in G_\sigma(\Sigma)$ is uniformly diametrically regular with respect to $\rho$.
\end{prop}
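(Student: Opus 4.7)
\textbf{Plan for Proposition \ref{prop: Gibbs urds}.} The strategy is to fix a countable family of Gibbs measures corresponding to rational-valued locally constant potentials, and approximate an arbitrary H\"older potential in sup-norm; the Gibbs property then converts this functional approximation into a uniform ball-by-ball comparison. Throughout, set $A = -1/\log \gamma$ (the constant from Lemma \ref{lem: ball cylinder}).

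First I would dispose of the uniform diametric regularity. For any Gibbs measure $\mu$ for a H\"older potential $\phi$ with pressure $P$, inequality \eqref{eq: Gibbs ineq} yields
\[ c_\phi := C^{-2}\exp(-P + \inf \phi) \leq \frac{\mu([\om|_{n+1}])}{\mu([\om|_n])} \leq C^2 \exp(-P + \sup \phi) =: C_\phi \]
for every $\om \in \Sigma$ and every $n$. Combining this with Lemma \ref{lem: ball cylinder} (which sandwiches $B(\om,r)$ and $B(\om,2r)$ between cylinders of depth differing by at most $N+1$), one obtains a constant $K = K(\phi,N)$ with $\mu(B(\om,2r)) \leq K \mu(B(\om,r))$ for all $\om \in \Sigma$ and all sufficiently small $r$. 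Then for $0 < r < K^{-1/\eps}$ one has $\mu(B(\om,2r)) \leq r^{-\eps}\mu(B(\om,r))$ uniformly in $\om$, which is uniform diametric regularity.

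Next I would construct the countable family $\Vk$ as the collection of Gibbs measures corresponding to all potentials $\tilde\phi : \Sigma \to \mathbb{Q}$ that depend on only finitely many coordinates. Every such $\tilde\phi$ is locally constant, hence H\"older continuous with respect to any symbolic metric, so its Gibbs measure is well defined; the family is plainly countable. Given $\mu \in G_\sigma(\Sigma)$ with H\"older potential $\phi$ and pressure $P$, and given $\eps > 0$, pick $\delta \in (0,\eps/(4A))$ and choose $\tilde\phi$ from this family with $\|\phi - \tilde\phi\|_\infty < \delta$ (possible since rational cylinder functions are uniformly dense in $C(\Sigma)$). Let $\nu$ be the corresponding Gibbs measure with pressure $\tilde P$ and Gibbs constant $\tilde C$. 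By the variational principle the pressure functional is $1$-Lipschitz in the sup-norm, so $|P - \tilde P| \leq \delta$. Applying \eqref{eq: Gibbs ineq} to both $\mu$ and $\nu$ yields, for every $\om \in \Sigma$ and $n \geq 1$,
\[ (C\tilde C)^{-1} e^{-2n\delta} \leq \frac{\mu([\om|_n])}{\nu([\om|_n])} \leq C\tilde C\, e^{2n\delta}. \]

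Finally I would convert this cylinder comparison to a ball comparison using Lemma \ref{lem: ball cylinder} and the bounded ratio of consecutive cylinder measures established above (for both $\mu$ and $\nu$). For $r$ small enough, uniformly in $\om$ there exists $n \leq A\log(1/r) + B$ with $[\om|_{n+1}] \subset B(\om,r) \subset [\om|_n]$, and therefore $\mu(B(\om,r)) \asymp \mu([\om|_n])$ and $\nu(B(\om,r)) \asymp \nu([\om|_n])$ with constants independent of $\om$. Combining this with the previous display gives a universal $K'$ such that
\[ (K')^{-1} r^{2A\delta} \leq \frac{\mu(B(\om,r))}{\nu(B(\om,r))} \leq K' r^{-2A\delta} \]
for $\om \in \Sigma$ and $r$ below a threshold independent of $\om$. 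Shrinking $r$ further so that $K' \leq r^{-\eps/2}$ we get $\dim_u(\mu || \nu, \rho) \leq 2A\delta + \eps/2 < \eps$, as required.

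The step I expect to require the most care is propagating all constants uniformly in $\om$: the Gibbs constants $C,\tilde C$ depend on the individual potentials, and the conversion from cylinders to balls via Lemma \ref{lem: ball cylinder} introduces a depth-shift (by $N$ or by the $B$ in $n \leq A\log(1/r) + B$) whose multiplicative effect must be absorbed into $r^{-\eps/2}$ by choosing $R$ small enough \emph{uniformly} in $\om$. All of this is straightforward since the bounds $c_\phi, C_\phi$ on consecutive cylinder ratios are $\om$-independent, but it must be tracked carefully to land in $\dim_u$ rather than just $\dim$.
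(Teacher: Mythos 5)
Your proof is correct and follows essentially the same approach as the paper: the countable dense family of rational-valued locally constant potentials, the $1$-Lipschitz dependence of pressure on the potential in sup-norm, the Gibbs inequality to get a uniform cylinder-measure comparison with geometrically growing error $e^{2n\delta}$, and Lemma \ref{lem: ball cylinder} to transfer from cylinders to balls. The only cosmetic difference is that you compare $\mu([\om|_n])$ with $\nu([\om|_n])$ at the same depth and then use the uniform bound on consecutive-cylinder ratios to absorb the depth shift from the ball/cylinder sandwiching, whereas the paper's inequality \eqref{eq: Gibbs sup norm} builds the depth shift directly into the cylinder comparison; both formulations lead to the same conclusion.
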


\begin{proof}
Let $\mu_\phi$ denote the Gibbs measure corresponding to a H\"older continuous potential $\phi: \Sigma \to \R$. It well known that the constant $P = P(\phi)$ for which $\mu_\phi$ satisfies \eqref{eq: Gibbs ineq} can be expressed via the following pressure formula
\[ P(\phi) = \lim \limits_{n \to \infty} \frac{1}{n} \log \sum \limits_{|u|=n} \exp (\sum \limits_{k=0}^{n-1} \phi(\sigma^k (u \om))) \]
for every $\om \in \Sigma$ (see e.g. \cite[Proof of Theorem 1.16]{BR08}). Am immediate consequence is the following: for a pair of  H\"older continuous potentials $\phi_1, \phi_2 : \Sigma \to \R$
\[ \|P(\phi_1) - P(\phi_2)\| \leq \|\phi_1 - \phi_2\|, \]
where $\|\cdot\|$ denotes the supremum norm on $\Sigma$ (see also \cite[Theorem 9.7.(iv)]{WaltersErgodicBook}). Combining this with \eqref{eq: Gibbs ineq} gives that for every $\phi_1, \phi_2$ there exists $M=M(\phi_1, \phi_2)$ such that for every $\om \in \Sigma$ and $n \in \N$
\begin{equation}\label{eq: Gibbs sup norm}
M^{-1}\exp(-2\|\phi_1 - \phi_2\|n) \leq \frac{\mu_{\phi_1}([\om|_{n-1}])}{\mu_{\phi_2}([\om|_{n}])}  \text{ and } \frac{\mu_{\phi_1}([\om|_n])}{\mu_{\phi_2}([\om|_{n-1}])} \leq M\exp(2\|\phi_1 - \phi_2\| n),
\end{equation}
To bound $\dim_u(\mu_{\phi_1}||\mu_{\phi_2}, \rho)$ we can use Lemma \ref{lem: ball cylinder}. Combined with \eqref{eq: Gibbs sup norm}, it gives that for  every $r>0$ small enough and every $\om \in \Sigma$ there exists $n \leq \frac{\log r}{\log \gamma} + B$ such that
\[\begin{split} \mu_{\phi_1}(B(x,r)) & \leq \mu_{\phi_1}([\om|_n]) \leq M\exp(2\|\phi_1 - \phi_2\|n)\mu_{\phi_2}([\om|_{n-1}]) \\
	& \leq M \exp(2\|\phi_1 - \phi_2\|B)\exp\left(2\|\phi_1 - \phi_2\|\frac{\log r}{\log \gamma}\right) \mu_{\phi_2}(B(x,r)) \\
	& = M' r^{-c\|\phi_1 - \phi_2\|}\mu_{\phi_2}(B(x,r)),
\end{split}\]
for a constant $M' = M'(\phi_1, \phi_2)$ and $c = c(\gamma)$ > 0. Repeating the above calculation using lower bounds, one obtains similarly
\[ \mu_{\phi_1}(B(x,r)) \geq M'^{-1}r^{c\|\phi_1 - \phi_2\|}\mu_{\phi_2}(B(x,r)) \]
(possibly increasing constants $M', c$). Those two bounds together give
\[ \dim_u(\mu_{\phi_1}||\mu_{\phi_2}, \rho) \leq c\|\phi_1 - \phi_2\|.\]
Therefore to prove that $G_\sigma(\Sigma)$ is uniform relative dimension separable it suffices to observe that the set of H\"older continuous functions $\phi : \Sigma \to \R$ is separable in the supremum norm. For example,  the following collection is a countable dense set: all functions $\phi : \Sigma \to \R$ for which there exists $n$ so that for every $\om \in \Sigma^*$ with $|\om| = n$ one has $\phi|_{[\om]} \equiv \mathrm{const} \in \Q$.

To prove that each $\mu \in G_{\sigma}(\Sigma)$ is uniformly diametrically regular, we invoke Lemma \ref{lem: ball cylinder} once more. Fix $N \in \N$ so that $\gamma^N \leq 1/2$  and take $r>0$ small enough so that for every $\om \in \Sigma$ there exists $n \in \N$ so that
\[ [\om|_{n+1}] \subset  B(\om,r) \subset [\om|_n] \text{ and } B(\om, 2r) \subset [\om|_{n-N}]. \]
Then by \eqref{eq: Gibbs ineq}
\[\begin{split} \mu(B(\om, 2r)) & \leq \mu([\om|_{n-N}]) \leq C \exp(-P(n-N) + \sum \limits_{k=0}^{n-N-1} \phi(\sigma^k \om)) \\
&  = C^2 \exp(P(N+1) - \sum \limits_{k=n-N}^n\phi(\sigma^{k}\om)) C^{-1} \exp(-P(n+1) + \sum \limits_{k=0}^{n} \phi(\sigma^k \om)) \\
& \leq C^2 \exp((N+1)(P+\|\phi\|)) \mu([\om|_{n+1}]) \\
& \leq M\mu(B(\om, r)),
\end{split}\]
for a constant $M$ depending only on $\rho$ and $\phi$. Given $\eps > 0$, this implies
\[ \mu(B(\om, 2r)) \leq r^{-\eps} \mu(B(\om,r)) \]
for all $r>0$ small enough to guarantee $r^{-\eps} \geq M$.
\end{proof}
\subsection{Exponential distance from the enemy}\label{subsec: EDE}

\begin{proof}[{\bf Proof of Proposition \ref{prop: ede holder}}]

	 First, note that we can assume that the attractor $\Lam$ of the IFS $\Fk$ is not a singleton (or equivalently $\Pi_\Fk$ is not constant), as otherwise Proposition \ref{prop: ede holder} holds trivially.
	
	We will make use of the bounded distortion property of $C^{1+\theta}$ conformal IFS: there exists a constant $C_D > 0$ such that 
	\begin{equation*}
	|f'_\om(x)| \leq C_D |f'_\om(y)| \text{ for every } x,y \in V \text{ and } \om \in \Sigma^*.
	\end{equation*}
	For the proof see e.g. \cite[Section 4.2]{MUbook}. An easy consequence (we use here that $\Lambda$ is not a singleton) is that there exist a constant $A>0$ such that
	\begin{equation}\label{eq: diam bounds}  A^{-1} \|f'_\om\| \leq \diam(f_\om(\Lambda)) \leq A \|f'_\om\| \text{ for all } \om \in \Sigma^*.
	\end{equation}
	
Assume that the EDE condition \eqref{eq:ede} is satisfied at $\om \in \Sigma$. Fix arbitrary $\alpha \in (0,1)$ and let $\eps > 0$ be such that $\alpha = \frac{1}{1+\eps}$. For $\tau \in \Sigma$ with $\tau \neq \om$, let $n = |\om \wedge \tau|$. Set $\gamma = \min \limits_{i \in \Ak} \inf \limits_{x \in V} \|f'_i(x)\|$ and note that by assumptions $\gamma > 0$. By \eqref{eq: diam bounds}, the EDE condition \eqref{eq:ede}  gives
	\[\begin{split} \rho_\Fk(\om,\tau) & = \|f'_{\om \wedge \tau}\| = \|f'_{\om|_n}\| \leq \gamma^{-1} \|f'_{\om|_{n+1}}\| \leq \frac{A}{\gamma} \diam(f_{\om|_{n+1}}(\Lambda)) =  \frac{A}{\gamma} \diam(\Pi_\Fk([\om|_{n+1}]))  \\
		&  \leq \frac{AC^{-\frac{1}{1+\eps}}}{\gamma} \mathrm{dist}\left(\Pi_\Fk(\om),\bigcup_{\substack{|v|=n+1\\ v\neq \om|_{n+1}}}\Pi_\Fk([v])\right)^{\frac{1}{1+\eps}} \\
		& \leq \frac{AC^{-\frac{1}{1+\eps}}}{\gamma}|\Pi_\Fk(\om) - \Pi_\Fk(\tau)|^\alpha,
	\end{split}\]
	hence \eqref{eq:ede} implies \eqref{eq:holder}.

In the other direction, assume that \eqref{eq:holder} holds. Given $\eps > 0$, set $\alpha = \frac{1}{1+\eps}$. For given $v \in \Sigma_*$ such that $|v| = n$ and $v \neq \om|_n$ take any $\tau \in [v]$. Then $\tau \wedge \om$ is a prefix of $\om|_n$,  so by \eqref{eq: diam bounds}
	
\[
\begin{split} |\Pi_\Fk(\om) - \Pi_\Fk(\tau)| & \geq C^{-\frac{1}{\alpha}} \rho_\Fk(\om,\tau)^{\frac{1}{\alpha}} = C^{-(1+\eps)} \|f'_{\om \wedge \tau}\|^{1+\eps} \\
	& \geq C^{-(1+\eps)} \|f'_{\om|_n}\|^{1+\eps} \geq (AC)^{-(1+\eps)}\diam(\Pi_\Fk([\om|_n]))^{1+\eps}.
\end{split}\]
As $\tau \in [v]$ is arbitrary we have
\[\mathrm{dist}\left(\Pi_\Fk(\om),\bigcup_{\substack{|v|=n\\ v\neq \om|_n}}\Pi_\Fk([v])\right)>\frac{ (AC)^{-(1+\eps)}}{2}\diam(\Pi_\Fk([\om|_n]))^{1+\eps}\]
	and hence \eqref{eq:holder} implies \eqref{eq:ede}.
\end{proof}

\section{Multifractal spectrum of self-similar measures}\label{sec: multifractal}

In this section we prove Theorem~\ref{thm: multi high dim main}. Let us begin with a general consequence of the nearly bi-Lipschitz property on the local dimensions of projections. Recall that for conformal iterated function systems, the nearly bi-Lipschitz property is equivalent to the EDE separation condition (Proposition \ref{prop: ede holder}).

\begin{lem}\label{lem: nearly biLip loc dim}
	Let $(X,\rho_X)$ and $(Y, d_{Y})$ be metric spaces.
	Let $\Pi: X \to Y$ be a Lipschtiz map and let $\mu$ be a finite Borel measure on $X$. If $\Pi$ is $\mu$-nearly bi-Lipschitz, then the following holds for $\mu$-a.e $x \in X$:
	\[ \ld(\Pi\nu, \Pi x) = \ld(\nu, x) \text{ and }  \ud(\Pi\nu, \Pi x) = \ud(\nu, x) \text{ for every finite Borel measure } \nu \text{ on } X. \]
\end{lem}

\begin{proof}
As we assume that $\Pi$ is Lipschitz, inequalities
\begin{equation}\label{eq: lipschitz loc dim} \ld(\Pi\nu, \Pi x) \leq \ld(\nu, x) \text{ and }  \ud(\Pi\nu, \Pi x) \leq \ud(\nu, x)
\end{equation}
hold for every $x \in X$ and every finite Borel measure $\nu$ on $X$. It therefore remains to prove the opposite inequalities for $\mu$-a.e. $x \in X$.
Let $x \in X$ and $\alpha \in (0,1)$ be such that there exists $C=C(x,\alpha)$ so that
\begin{equation}\label{eq: alpha holder inverse}
	\rho_X(x,y) \leq C \rho_Y(\Pi(x), \Pi(y))^\alpha \text{ for every } y \in X.
\end{equation} 
Then for all $r > 0$
\[ \Pi^{-1}(B(\Pi(x), r)) \subset B(x, Cr^\alpha), \]
so for any finite Borel measure $\nu$ on $X$
\[ \Pi  \nu (B(\Pi(x), r)) \leq \nu (B(x, Cr^\alpha)).\]
Therefore, for $0 < r < 1$
\[ \frac{\log \Pi\nu(B(\Pi(x),r))}{\log r} \geq \frac{\log \nu(B(x, Cr^\alpha))}{\log r} = \frac{\log Cr^\alpha}{\log r} \cdot \frac{\log \nu(B(x, Cr^\alpha))}{\log Cr^\alpha}. \]
Taking $\liminf \limits_{r \to 0}$ and $\limsup \limits_{r \to 0}$ yields
\begin{equation}\label{eq: loc dim lower bound}
	 \ld(\Pi \nu, \Pi (x)) \geq \alpha\ld(\nu, x) \text{ and } \ud(\Pi \nu, \Pi (x)) \geq \alpha\ud(\nu, x).
\end{equation}
If $\Pi$ is $\mu$-nearly bi-Lipschitz, then for $\mu$-a.e. $x \in X$, inequality \eqref{eq: alpha holder inverse} holds for every $\alpha \in (0,1)$, and hence \eqref{eq: loc dim lower bound} gives
\[ \ld(\Pi\nu, \Pi x) \geq \ld(\nu, x) \text{ and }  \ud(\Pi\nu, \Pi x) \geq \ud(\nu, x) \]
for $\mu$-a.e. $x$ and every $\nu$. Combined with \eqref{eq: lipschitz loc dim}, this finishes the proof.
\end{proof}

Now we can turn to proving Theorem~\ref{thm: multi high dim main}. We will use the following notation: for a self-similar measure $\nu$ on $\R^d$ corresponding to the IFS $\Fk=\{ x \mapsto \lam_i O_i x + t_i\}_{i \in \Ak}$ with $\lam_i \in (0,1)$, $d \times d$ orthogonal matrices $O_i$ and $t_i \in \R$ and a probability vector $p = (p_i)_{i\in\Ak}$, given $\alpha \in \R$ we set
\[ T^*(\alpha) := \inf_{q  \in \R}(\alpha q+T(q)), \]
where $T(q)$ is the unique real solution of $\sum_{i\in\Ak}p_i^q\lambda_i^{T(q)}=1$. First, we establish the lower bound.

\begin{lem}\label{lem:forupper}
	Let $\Fk=\{f_i(x)=\lambda_iO_ix+t_i\}_{i\in\Ak}$ be an IFS consisting of similarities of $\R^d$, with similarity dimension $s_0 = s(\Fk)$, let $p=(p_i)_{i\in\Ak}$ be a probability vector and let $\nu$ be the corresponding self-similar measure. If $(p_i)_{i\in\Ak}\neq(\lambda_i^{s_0})_{i\in\Ak}$ and $s_0<d$ then for every $\alpha\in\left[\frac{\sum_{i}\lambda_i^{s_0}\log p_i}{\sum_{i}\lambda_i^{s_0}\log\lambda_i},\max \limits_{i \in \Ak}\frac{\log p_i}{\log\lambda_i}\right]$
	\[
	\dim_H \left( \left\{x:d(\nu, x)=\alpha\right\}\right)\leq T^*(\alpha) = \inf_{q\leq0}(\alpha q+T(q))\]
\end{lem}

The proof of the lemma is standard, but we include it for completeness.

\begin{proof}
If $\alpha\in\left[\frac{\sum_{i}\lambda_i^{s_0}\log p_i}{\sum_{i}\lambda_i^{s_0}\log\lambda_i},\max \limits_{i \in \Ak}\frac{\log p_i}{\log\lambda_i}\right)$, then there exists unique $q = q_\alpha \leq 0$ such that
\begin{equation}\label{eq: alpha T der}
T'(q) = -\alpha,
\end{equation}
and moreover
\begin{equation}\label{eq: alpha q rel}
T'(q)=-\frac{\sum_ip_i^q|\lambda_i|^{T(q)}\log p_i}{\sum_ip_i^q|\lambda_i|^{T(q)}\log|\lambda_i|}\text{ and }q\alpha+T(q)=-qT'(q)+T(q)=T^*(\alpha),
\end{equation}
see for example \cite[Chapter~11]{Falconertechniques} or \cite[Chapter~5]{BSS}. Consequently
	\begin{equation}\label{eq: symbolic legendre negative} T^*(\alpha) = \inf_{q\leq0}(\alpha q+T(q)).
	\end{equation}
	By continuity, \eqref{eq: symbolic legendre negative} extends to $\alpha = \max \limits_{i \in \Ak}\frac{\log p_i}{\log\lambda_i}$.	It therefore suffices to show that for every $\alpha\in\left[\frac{\sum_{i}\lambda_i^{s_0}\log p_i}{\sum_{i}\lambda_i^{s_0}\log\lambda_i},\max \limits_{i \in \Ak} \frac{\log p_i}{\log\lambda_i}\right]$ and $q \leq 0$ inequality
	\[ \dim_H\left(\left\{x:d(\nu, x)=\alpha\right\} \right)\leq \alpha q + T(q) \]
	holds.

	Let $\mu = p^\N$ be the Bernoulli measure corresponding to $p$. By definition,
	$$
	\left\{x:d (\nu, x)=\alpha\right\}=\bigcap_{p=1}^\infty\bigcup_{N=1}^\infty\bigcap_{n=N}^{\infty}\{x:2^{-n(\alpha+1/p)}\leq\nu(B(x,2^{-n}))\leq 2^{-n(\alpha-1/p)}\}.
	$$
	Hence, it is enough to show that for every $p\geq1$, $N\geq1$, $q\leq0$ and $\varepsilon>0$
	\begin{equation}\label{eq:enoughmulti}
		\dim_H \left( \bigcap_{n=N}^{\infty}\{x:2^{-n(\alpha+1/p)}\leq\nu(B(x,2^{-n}))\leq 2^{-n(\alpha-1/p)}\}\right)\leq q(\alpha-1/p)+T(q)+\varepsilon.
	\end{equation}
	Let $X_{N,p}=\bigcap_{n=N}^{\infty}\{x:2^{-n(\alpha+1/p)}\leq\nu(B(x,2^{-n}))\leq 2^{-n(\alpha-1/p)}\}$ and $\mathcal{B}^N=\{B(x,2^{-n}):n\geq N\text{ and }x\in X_{N,p}\}$.	Then by Besicovitch's covering theorem (see for example \cite[Theorem~B.3.2]{BSS}), there exists $Q=Q(d)$ such that for every $i=1,\ldots, Q$ there exists a countable family $\Bk_i^N\subseteq\Bk^N$ such that for every distinct $B,B'\in\Bk_i^N$, $B\cap B'=\emptyset$ and $X_{N,p}\subseteq\bigcup_{i=1}^Q\bigcup_{B\in\Bk^N_i}B$. Clearly, $X_{N,p}\subseteq X_{N+1,p}$, and so $\bigcup_{i=1}^Q\Bk_i^M$ is a cover for $X_{N,p}$ for every $M\geq N$.
	Hence,
	\[\begin{split}
		\mathcal{H}_{2^{-M}}^{q(\alpha-1/p)+T(q)+\varepsilon}(X_{N,p})&\leq\sum_{i=1}^Q\sum_{B\in\Bk_i^M}|B|^{q(\alpha-1/p)+T(q)+\varepsilon}\leq\sum_{i=1}^Q\sum_{B\in\Bk_i^M}\nu(B)^q|B|^{T(q)+\varepsilon}=\star.
	\end{split}\]
	For every $B\in\Bk^M$ there exists $u\in\Sigma$ such that $\Pi(u)$ is the center of $B$, furthermore, there exists a minimal $n=n(B)\geq1$ such that $\Pi([u|_n])\subseteq B$. Since $q\leq 0$ we get
	\[\begin{split}
		\star&\leq\sum_{i=1}^Q\sum_{B\in\Bk_i^M}\mu([u(B)|_{n(B)}])^q|B|^{T(q)+\varepsilon}\lesssim 2^{-M\varepsilon}\sum_{i=1}^Q\sum_{B\in\Bk_i^M}\mu([u(B)|_{n(B)}])^q\lambda_{u(B)|_{n(B)}}^{T(q)}\\
		&=2^{-M\varepsilon}\sum_{i=1}^Q\sum_{B\in\Bk^M_i}p_{u(B)|_{n(B)}}^q\lambda_{u(B)|_{n(B)}}^{T(q)}.
	\end{split}\]
	Since $\Bk_i^M$ is formed by disjoint balls, the cylinders $\{[u(B)|_{n(B)}]\}_{B\in\Bk_i^M}$ are disjoint too, and by the definition of $T(q)$, we get $\mathcal{H}_{2^{-M}}^{q(\alpha-1/p)+T(q)+\varepsilon}(X_{N,p})\lesssim 2^{-M\varepsilon}$, which implies \eqref{eq:enoughmulti}.
\end{proof}

\begin{proof}[{\bf Proof of Theorem~\ref{thm: multi high dim main}}]
 Our goal is to prove that for almost every $t$, equality
	\begin{equation}\label{eq:multifractal}
		\dim_H \left( \left\{x:d(\nu_{t,p}, x)=\alpha\right\} \right)=\inf_{q\in\R}(\alpha q+T(q))
	\end{equation}
	holds simultaneously for all $\alpha\in\left[\frac{\sum_{i}\lambda_i^{s_0}\log p_i}{\sum_{i}\lambda_i^{s_0}\log\lambda_i},\max \limits_{i \in \Ak}\frac{\log p_i}{\log\lambda_i}\right]$. This will finish the proof in the case $d>1$. For $d=1$ we can additionally apply \cite[Theorem~1.2, Remark~7.3]{BarralFeng}, showing that for almost every $t$,  \eqref{eq:multifractal} holds for $\alpha \in \left[ \min \limits_{i \in \Ak} \frac{\log p_i}{\log\lambda_i}, \frac{\sum_{i}\lambda_i^{s_0}\log p_i}{\sum_{i}\lambda_i^{s_0}\log\lambda_i}\right]$. The upper bound $	\dim_H \left( \left\{x:d(\nu_{t,p}, x)=\alpha\right\} \right) \leq \inf \limits_{q\in\R}(\alpha q+T(q))$ follows by Lemma~\ref{lem:forupper}.

	We shall prove the lower bound for all $\alpha\in\left[\min \limits_{i \in \Ak} \frac{\log p_i}{\log\lambda_i},\max \limits_{i \in \Ak}\frac{\log p_i}{\log\lambda_i}\right]$. Let us first introduce some notation. Let $\Sigma = \Ak^N$ be the symbolic space. Given $\alpha\in\left(\min \limits_{i \in \Ak} \frac{\log p_i}{\log\lambda_i},\max \limits_{i \in \Ak}\frac{\log p_i}{\log\lambda_i}\right)$, let $q_\alpha \in \R$ be such that \eqref{eq: alpha T der} and \eqref{eq: alpha q rel} hold with $q = q_\alpha$ (recall e.g.  \cite[Chapter~5]{BSS}). Further, for a given probability vector $p = (p_i)_{i \in \Ak}$, let $\nu^p = p^{\otimes \N}$ be the Bernoulli measure on $\Sigma$ corresponding to $p$ and let $\mu_{\alpha, p}$ be the Bernoulli measure on $\Sigma$ corresponding to the probability vector $(p_i^{q_\alpha}|\lambda_i|^{T(q_\alpha)})_{i\in\Ak}$. Let $\Pi_t : \Sigma \to \R^d$ be the natural projection map corresponding to the IFS $\Fk_t$. Note that with this notation $\nu_{t,p} = \Pi_t \nu^p$. Endow $\Sigma$ with the adapted metric $\rho = \rho_{\Fk_t}$ defined in \eqref{eq: adapted metric} and corresponding to the IFS $\Fk_t$ (note that $\rho_{\Fk_t}$ does not depend on $t$; in fact it depends only on contractions $\lam_i, i \in \Ak$, which we treat as fixed). A direct computation (see e.g. \cite[Lemma 5.1.3]{BSS}) shows that (with respect to metric $\rho$ on $\Sigma$)
	\begin{equation}\label{eq: nu t p dim}
	 d(\nu^{p}, \om) = \alpha \text{ for } \mu_{\alpha, p}\text{-a.e. } \om \in \Sigma
	\end{equation}
	and, by \eqref{eq: dim symbolic}, \eqref{eq: alpha T der} and \eqref{eq: alpha q rel}
	\begin{equation}\label{eq: mu alpha p dim}
		\dim_H \mu_{\alpha, p}  =\frac{h(\mu_{\alpha, p})}{\chi(\mu_{\alpha, p})}=-q_\alpha\frac{\sum_ip_i^{q_\alpha}|\lambda_i|^{T(q_\alpha)}\log p_i}{\sum_ip_i^{q_\alpha}|\lambda_i|^{T(q_\alpha)}\log|\lambda_i|}+T(q_\alpha)  =q_\alpha\alpha+T(q_\alpha)=T^*(\alpha).
	\end{equation}
	By Example \ref{ex:translate}, the assumptions of Theorem \ref{thm: multi high dim main} are sufficient to establish the transversality condition for the family $\Fk_t$ with $t$ as the parameter. We can therefore apply Theorem \ref{thm: general proj} (or, more directly, Theorem \ref{thm: main trans IFS}) and, as we assume $s_0 = s(\Fk_t) < d$, we can conclude that for $\eta$-a.e. $t \in (\R^d)^{\Ak}$, simultaneously for every $\alpha$ and $p$, the natural projection $\Pi_t$ is $\mu_{\alpha, p}$-nearly bi-Lipschitz, and (recalling \eqref{eq: mu alpha p dim}) equality
	\begin{equation}\label{eq: pi t mu alpha p dim}
	\dim_H \Pi_t\mu_{\alpha, p} = T^*(\alpha)
	\end{equation} holds. It therefore follows from Lemma \ref{lem: nearly biLip loc dim} and \eqref{eq: nu t p dim} that for $\eta$-a.e. $\lam$ and every $\alpha,p$
	\[ d(\nu_{t,p}, \Pi_t(\om)) = d(\Pi_t\nu^{p}, \Pi_t(\om)) = d(\nu^{p}, \om) = \alpha \text{ for } \mu_{\alpha, p}\text{-a.e.} \om \in \Sigma.\]
	This implies (as the above shows $\Pi_t\mu_{\alpha, p}\left( \{x:d ( \nu_{t,p}, x)=\alpha\} \right) = 1$)
	\[ \dim_H \left( \{x:d ( \nu_{t,p}, x)=\alpha\} \right) \geq \hdim \Pi_t \mu_{\alpha, p} \]
	and so by \eqref{eq: pi t mu alpha p dim}
	\begin{equation}\label{eq: multifractal lower bound} \dim_H \left( \{x:d ( \nu_{t,p}, x)=\alpha\} \right) \geq  T^*(\alpha)
	\end{equation}
	This establishes \eqref{eq:multifractal} for  $\alpha\in\left[\frac{\sum_{i}\lambda_i^{s_0}\log p_i}{\sum_{i}\lambda_i^{s_0}\log\lambda_i},\max \limits_{i \in \Ak}\frac{\log p_i}{\log\lambda_i}\right)$.
	
	To complete the proof, if $\alpha=\max \limits_{i \in \Ak}\frac{\log p_i}{\log|\lambda_i|}$ then in order to obtain the lower bound (the upper bound in this case follows from Lemma \ref{lem:forupper}), one can let $\Ak_{\max}=\left\{i\in\Ak:\frac{\log p_i}{\log|\lambda_i|}=\alpha\right\}$ and repeat the argument above with $\mu_{\alpha, p}$ being the uniformly distributed Bernoulli measure on $\Ak_{\max}^\N$. This establishes \eqref{eq:multifractal} for $\alpha\in\left[\frac{\sum_{i}\lambda_i^{s_0}\log p_i}{\sum_{i}\lambda_i^{s_0}\log\lambda_i},\max \limits_{i \in \Ak}\frac{\log p_i}{\log\lambda_i}\right]$. For $\alpha=\min \limits_{i \in \Ak} \frac{\log p_i}{\log|\lambda_i|}$, one can similarly repeat the argument to obtain the lower bound \eqref{eq: multifractal lower bound}.
	
\end{proof}

Note that it was crucial for the above proof that for almost every translation parameter $t$, the $\mu_{\alpha, p}$-nearly bi-Lipschitz property (i.e. EDE) holds simultaneously for all $\alpha$ (and $p$), as follows from the universal projection theorem (Theorem \ref{thm: general proj}). If it was established only for fixed $\alpha$, one would obtain equality \eqref{eq:multifractal} only for a single level-set rather then on the full spectrum $\left(\frac{\sum_{i}\lambda_i^{s_0}\log p_i}{\sum_{i}\lambda_i^{s_0}\log\lambda_i},\max \limits_{i \in \Ak}\frac{\log p_i}{\log\lambda_i}\right]$.

\bibliographystyle{alpha}
\bibliography{selfsimilar_bib}

\end{document}